\newcommand{\Comp}{\operatorname{Comp}}
\newcommand{\Av}{\operatorname{Av}}
\newtheorem*{rep@theorem}{\rep@title}
\newcommand{\newreptheorem}[2]{%
\newenvironment{rep#1}[1]{%
 \def\rep@title{#2 \ref{##1}}%
 \begin{rep@theorem}}%
 {\end{rep@theorem}}}
\newtheorem{theorem}{Theorem}[section]
\newtheorem{corollary}{Corollary}[section]
\newtheorem{conjecture}{Conjecture}[section]
\newtheorem{question}{Question}[section]
\theoremstyle{definition}
\newtheorem{definition}{Definition}[section]
\newtheorem{example}{Example}[section]
\begin{document}
\title{Stack-Sorting Preimages of Permutation Classes}
\author{Colin Defant}
\address{Princeton University \\ Fine Hall, 304 Washington Rd. \\ Princeton, NJ 08544}
\email{cdefant@princeton.edu}

\begin{abstract}
We extend and generalize many of the enumerative results concerning West's stack-sorting map $s$. First, we prove a useful theorem that allows one to efficiently compute $|s^{-1}(\pi)|$ for any permutation $\pi$, answering a question of Bousquet-M\'elou. We then enumerate permutations in various sets of the form $s^{-1}(\Av(\tau^{(1)},\ldots,\tau^{(r)}))$, where $\Av(\tau^{(1)},\ldots,\tau^{(r)})$ is the set of permutations avoiding the patterns $\tau^{(1)},\ldots,\tau^{(r)}$. These preimage sets often turn out to be permutation classes themselves, so the current paper represents a new approach, based on the theory of valid hook configurations, for solving classical enumerative problems. In one case, we solve a problem previously posed by Bruner. We are often able to refine our counts by enumerating these permutations according to their number of descents or peaks. Our investigation not only provides several new combinatorial interpretations and identities involving known sequences, but also paves the way for several new enumerative problems. 
\end{abstract}

\maketitle

\bigskip

\section{Introduction}\label{Sec:Intro}

Throughout this paper, we write permutations as words in one-line notation. Let $S_n$ denote the set of permutations of $[n]$. A \emph{descent} of a permutation $\pi=\pi_1\cdots\pi_n\in S_n$ is an index $i\in[n-1]$ such that $\pi_i>\pi_{i+1}$. A \emph{peak} of $\pi$ is an index $i\in\{2,\ldots,n-1\}$ such that $\pi_{i-1}<\pi_i>\pi_{i+1}$. 

\begin{definition}\label{Def1}
We say the permutation $\sigma=\sigma_1\cdots\sigma_n$ \emph{contains the pattern} $\tau=\tau_1\cdots\tau_m$ if there are indices $i_1<\cdots<i_m$ such that $\sigma_{i_1}\cdots\sigma_{i_m}$ has the same relative order as $\tau$. Otherwise, we say $\sigma$ \emph{avoids} $\tau$. Denote by $\Av(\tau^{(1)},\ldots,\tau^{(r)})$ the set of permutations that avoid the patterns $\tau^{(1)},\ldots,\tau^{(r)}$. Let $\Av_n(\tau^{(1)},\ldots,\tau^{(r)})=\Av(\tau^{(1)}\ldots,\tau^{(r)})\cap S_n$. Let $\Av_{n,k}(\tau^{(1)},\ldots,\tau^{(r)})$ be the set of permutations in $\Av_n(\tau^{(1)},\ldots,\tau^{(r)})$ with exactly $k$ descents. 
\end{definition}     

A set of permutations is called a \emph{permutation class} if it is the set of permutations avoiding some (possibly infinite) collection of patterns. Equivalently, a permutation class is a downset in the poset of all permutations ordered by containment. The \emph{basis} of a class is the minimal set of permutations not in the class. 

The notion of pattern avoidance in permutations, which has blossomed into an enormous area of research and which plays a lead role in the present article, began in Knuth's book \emph{The Art of Computer Programming} \cite{Knuth}. In this book, Knuth described a so-called \emph{stack-sorting algorithm}; it was the study of the combinatorial properties of this algorithm that led him to introduce the idea of pattern avoidance. In his 1990 Ph.D. thesis \cite{West}, West defined a deterministic version of Knuth's stack-sorting algorithm, which we call the \emph{stack-sorting map} and denote by $s$. The stack-sorting map is a function defined by the following procedure. 

Suppose we are given an input permutation $\pi\in S_n$. Place this permutation on the right side of a vertical ``stack." Throughout this process, if the next entry in the input permutation is larger than the entry at the top of the stack or if the stack is empty, the next entry in the input permutation is placed at the top of the stack. Otherwise, the entry at the top of the stack is annexed to the end of the growing output permutation. This procedure stops when the output permutation has length $n$. We then define $s(\pi)$ to be this output permutation. Figure \ref{Fig1} illustrates this procedure and shows that $s(3142)=1324$.  

\begin{figure}[t]
\begin{center}
\includegraphics[width=1\linewidth]{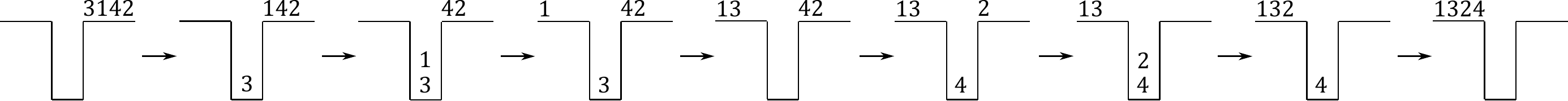}
\end{center}  
\caption{The stack-sorting map $s$ sends $3142$ to $1324$.}
\end{figure}\label{Fig1}

If $\pi\in S_n$, we can write $\pi=LnR$, where $L$ (respectively, $R$) is the (possibly empty) substring of $\pi$ to the left (respectively, right) of the entry $n$. West observed that the stack-sorting map can be defined recursively by $s(\pi)=s(L)s(R)n$ (here, we also have to allow $s$ to take permutations of arbitrary finite sets of positive integers as arguments). There is also a natural definition of the stack-sorting map in terms of tree traversals of decreasing binary plane trees (see \cite{Bona,Defant,Defant2}). 

The ``purpose" of the stack-sorting map is to sort the input permutation into increasing order. Hence, we say a permutation $\pi\in S_n$ is \emph{sortable} if $s(\pi)=123\cdots n$. The above example illustrates that the stack-sorting map does not always do its job. In other words, not all permutations are sortable. In fact, the following characterization of sortable permutations follows from Knuth's work. 

\begin{theorem}[\!\!\cite{Knuth}]\label{Thm1}
A permutation $\pi$ is sortable if and only if it avoids the pattern $231$. 
\end{theorem}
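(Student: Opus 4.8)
The plan is to prove both directions of the equivalence using West's recursive description $s(\pi) = s(L)s(R)n$, where $\pi = LnR$, by induction on the length $n$ of the permutation. The base case $n \leq 1$ is trivial since the single permutation of $[1]$ is sortable and avoids $231$ vacuously. For the inductive step, suppose the claim holds for all permutations of length less than $n$, and let $\pi \in S_n$ with $\pi = LnR$.

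For the forward direction, I would argue the contrapositive: if $\pi$ contains $231$, then $\pi$ is not sortable. There are two cases. If the occurrence of $231$ lies entirely within $L$ or entirely within $R$, then by the inductive hypothesis $s(L)$ (respectively $s(R)$) is not the increasing word on its entries, so $s(\pi) = s(L)s(R)n$ is not increasing. Otherwise, some entry of the ``$23$'' part of the pattern is the symbol $n$ itself, or the occurrence straddles $L$ and $R$; the key observation is that any entry of $L$ is forced to appear before any entry of $R$ in $s(\pi)$, and $n$ appears last. So if $\pi$ has an entry $a \in L$ and an entry $b \in R$ with $a > b$ (which is exactly what a straddling $231$, or a $23$-$1$ configuration using $n$, produces — here $a$ plays the role of ``$2$'' or is dominated by the ``$2$'', and $b$ plays the role of ``$1$''), then $a$ precedes $b$ in $s(\pi)$ while $a > b$, so $s(\pi)$ is not sorted. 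One has to be slightly careful to confirm that containing $231$ really does force such a pair $(a,b)$, but this is a short combinatorial check.

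For the reverse direction, suppose $\pi$ avoids $231$; I want to show $s(\pi) = 12\cdots n$. First note that since $\pi$ avoids $231$ and $n$ is the largest entry, no entry of $R$ can exceed any entry of $L$ — otherwise an entry of $L$, the entry $n$, and a smaller entry of $R$ would form a $231$ pattern. Hence $L$ consists of the entries $\{1, \ldots, |L|\}$ and $R$ consists of $\{|L|+1, \ldots, n-1\}$, as sets. Moreover $L$ and $R$ are themselves $231$-avoiding (as subwords of a $231$-avoiding permutation), so by the inductive hypothesis $s(L)$ is the increasing arrangement of $\{1, \ldots, |L|\}$ and $s(R)$ is the increasing arrangement of $\{|L|+1, \ldots, n-1\}$. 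Therefore $s(\pi) = s(L)s(R)n = 12\cdots(n-1)n$, as desired.

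The main obstacle, such as it is, lies in the forward direction: one must verify cleanly that a $231$-pattern in $\pi$ always produces an ``inversion that survives stack-sorting,'' namely a pair with a larger entry of $L$ preceding a smaller entry of $R$, or that it localizes to $L$ or to $R$. The case analysis is routine but needs care in tracking where the three entries of the pattern sit relative to the maximum $n$. Everything else is a direct unwinding of the recursion, so the proof is essentially a clean double induction once the recursive formula $s(\pi)=s(L)s(R)n$ is in hand.
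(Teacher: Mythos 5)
Your proof is correct. The paper itself does not prove this statement---it is quoted from Knuth with a citation---so there is no internal argument to compare against; your induction on length using West's recursion $s(\pi)=s(L)s(R)n$ is the standard way to establish it, and both directions are sound: the straddling case analysis in the forward direction does always produce a pair $a\in L$, $b\in R$ with $a>b$ (the entry $n$ can only play the role of the ``3'', so the ``1'' of any non-localized occurrence lands in $R$ below some entry of $L$), and that pair survives as an inversion in $s(L)s(R)n$. One small slip in wording: in the reverse direction you write that ``no entry of $R$ can exceed any entry of $L$,'' but the inequality you actually need (and the one your parenthetical justification and your conclusion about the value sets of $L$ and $R$ both support) is the opposite, namely that no entry of $L$ exceeds any entry of $R$; with that phrase corrected, the argument is complete, granting the routine point that the inductive hypothesis is applied to $L$ and $R$ as words on arbitrary sets of distinct integers, which the paper's conventions for $s$ explicitly permit.
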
 

Even if a permutation is not sortable, we can still try to sort it via iterated use of the stack-sorting map. In what follows, $s^t$ denotes the composition of $s$ with itself $t$ times.  

\begin{definition}
A permutation $\pi\in S_n$ is called $t$-\emph{stack-sortable} if $s^t(\pi)=123\cdots n$. Let $\mathcal W_t(n)$ denote the set of $t$-stack-sortable permutations in $S_n$. Let $W_t(n)=|\mathcal W_t(n)|$. 
\end{definition}

Theorem \ref{Thm1} states that $\mathcal W_1(n)=\Av_n(231)$, so it follows from the well-known enumeration of $231$-avoiding permutations that $W_1(n)=C_n=\frac{1}{n+1}{2n\choose n}$ is the $n^\text{th}$ Catalan number. In his thesis, West proved \cite{West} that a permutation is $2$-stack-sortable if and only if it avoids the pattern $2341$ and also avoids any $3241$ pattern that is not part of a $35241$ pattern. He also conjectured the following theorem, which Zeilberger proved in 1992. 

\begin{theorem}[\!\!\cite{Zeilberger}]\label{Thm2}
We have 
\begin{equation}\label{Eq1}
W_2(n)=\frac{2}{(n+1)(2n+1)}{3n\choose n}.
\end{equation}
\end{theorem}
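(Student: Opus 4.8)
The plan is to reduce the theorem to a statement about stack-sorting preimages and then to a solvable functional equation. Using West's recursion $s(\pi)=s(L)s(R)n$ for $\pi=LnR$ (where $n$ is the largest entry of $\pi$ and $L,R$ are the factors to its left and right), one applies $s$ once more to the word $s(L)s(R)n$, whose maximum sits at the right end, and obtains $s^2(\pi)=s\bigl(s(L)s(R)\bigr)\,n$. Hence $\pi\in S_n$ is $2$-stack-sortable if and only if $s\bigl(s(L)s(R)\bigr)$ is the identity, which by Theorem~\ref{Thm1} happens exactly when the word $s(L)s(R)$ avoids $231$. In other words, $\mathcal W_2(n)$ is the set of $\pi\in S_n$ with $s(\pi)\in\Av_n(231)$, so $\mathcal W_2(n)=s^{-1}(\Av_n(231))$ and $W_2(n)=\sum_{\sigma\in\Av_n(231)}|s^{-1}(\sigma)|$; this places the theorem squarely within the framework of the present paper, since $\Av(231)$ is a permutation class and we are counting its full stack-sorting preimage.

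Next I would convert the condition ``$s(L)s(R)$ avoids $231$'' into a recursive description of membership in $\mathcal W_2$. Since $s(L)$ and $s(R)$ are words on disjoint value sets, an occurrence of $231$ in $s(L)s(R)$ either lies entirely in $s(L)$, or entirely in $s(R)$, or straddles the two; the straddling case forces a clean relation between the small entries of $s(R)$, the right-to-left maxima of $s(L)$, and the inversions of $s(R)$. Pushing this analysis through — this is in essence West's recursive characterization of $2$-stack-sortable permutations — yields a combinatorial rule for $\mathcal W_2$ expressed through the decomposition $\pi=LnR$. One then introduces a single catalytic variable (for instance one marking the value of the first entry, or the number of right-to-left maxima) and writes down a functional equation for the bivariate generating function $F(x,u)=\sum_{\pi}x^{|\pi|}u^{\mathrm{stat}(\pi)}$, the sum being over all $2$-stack-sortable permutations.

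The decisive step is to solve this functional equation. I would apply the kernel method: the equation will have the shape $K(x,u)F(x,u)=A(x,u)+B(x,u)F(x,1)$ (after an iteration if necessary), one substitutes the algebraic branch $u=U(x)$ of the kernel equation $K(x,u)=0$ to annihilate the left-hand side, and one solves the resulting relation for $F(x,1)=\sum_n W_2(n)x^n$. This produces an explicit algebraic equation satisfied by $\sum_n W_2(n)x^n$. To finish, one checks that the generating function $G(x)=\sum_n\frac{2}{(n+1)(2n+1)}\binom{3n}{n}x^n$ satisfies that same equation — this can be verified via Lagrange inversion, or invoked from Tutte's classical count of rooted nonseparable planar maps with $n+1$ edges, which is exactly $\frac{2}{(n+1)(2n+1)}\binom{3n}{n}$ — and one compares initial terms to select the correct branch.

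The hard part is unquestionably the last two steps: isolating a catalytic statistic for which the recursive decomposition genuinely closes into a functional equation, and then carrying out the kernel-method solution and the algebraic verification, which is precisely where Zeilberger's original argument demanded long, computer-assisted computation. An alternative better aligned with the methods of this paper is to bypass the functional equation entirely and evaluate $\sum_{\sigma\in\Av_n(231)}|s^{-1}(\sigma)|$ directly using the theory of valid hook configurations: each preimage of a $231$-avoiding permutation $\sigma$ is encoded by a valid hook configuration of $\sigma$ together with auxiliary sequences, and summing the resulting local weights over the Catalan-structured family $\Av(231)$ again yields an algebraic equation for $\sum_n W_2(n)x^n$, which one then matches against $G(x)$.
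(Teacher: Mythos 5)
The paper does not prove this theorem at all: it is quoted with the citation to Zeilberger, and the surrounding text only records the equivalent reformulation $\mathcal W_2(n)=s^{-1}(\Av_n(231))$, which is the one step of your proposal that is genuinely complete (it follows immediately from $s^2(\pi)=123\cdots n\iff s(\pi)\in\Av_n(231)$, i.e.\ from Theorem~\ref{Thm1}; you do not even need the recursion $s(\pi)=s(L)s(R)n$ for that). Everything after that reduction in your write-up is an outline of known strategies rather than an argument. The ``straddling case'' analysis of $231$ occurrences in $s(L)s(R)$ is exactly West's characterization of $2$-stack-sortable permutations (avoidance of $2341$ and of the barred pattern $3\overline{5}241$), which is itself a nontrivial theorem that you assert rather than derive; and even granting it, you never identify the catalytic statistic, never write down the functional equation, and never perform the kernel-method solution. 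Historically this is precisely where the difficulty lies: Zeilberger's proof consists of setting up and solving such an equation with substantial (computer-assisted) algebra, and the later combinatorial proofs (Dulucq--Gire--Guibert, Goulden--West, Cori--Jacquard--Schaeffer) replace it with intricate bijections to nonseparable planar maps. Saying ``one then checks that $G(x)$ satisfies the same equation'' presupposes the equation you have not produced, so the decisive enumerative content of \eqref{Eq1} is missing.

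Your proposed alternative --- evaluating $\sum_{\sigma\in\Av_n(231)}|s^{-1}(\sigma)|$ directly via valid hook configurations and Theorem~\ref{Thm5} --- is not a repair. Summing $C_{(q_0,\ldots,q_k)}$ over the valid compositions of all $231$-avoiding permutations has no known closed-form treatment; the paper itself stresses that our understanding of valid hook configurations is too limited even to produce a formula for $W_3(n)$, and it succeeds with this method only for much smaller classes such as $\Av(231,312,321)$ or $\Av(132,231,312)$, where the valid compositions can be described explicitly. So as it stands the proposal correctly situates the theorem ($W_2(n)$ is the fertility of $\Av_n(231)$) but does not prove it; to make it a proof you would have to either carry out the functional-equation derivation and its solution in full, or import one of the cited proofs wholesale.
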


Combinatorial proofs of this theorem arose later in \cite{Cori,Dulucq2,Dulucq,Goulden}. Some authors have studied the enumerations of $2$-stack-sortable permutations according to certain statistics \cite{BonaSimplicial,Bousquet98,Bouvel,Dulucq}. Recently, the authors of \cite{Duchi} introduced new combinatorial objects known as \emph{fighting fish} and showed that they are counted by the numbers $W_2(n)$. Fang has now given a bijection between fighting fish and $2$-stack-sortable permutations \cite{Fang}. The authors of \cite{Bevan} study what they call $n$-\emph{point dominoes}, and they have made the fascinating discovery that the number of these objects is $W_2(n+1)$. In \cite{Bona}, B\'ona draws attention to the fact that a certain class of lattice paths are counted by the numbers $2^{2n-1}W_2(n)$ (see \cite{Bousquet02}), and he asks about the possibility of using this result to obtain a simple combinatorial proof of \eqref{Eq1}. 

The primary purpose of this article is to study preimages of permutation classes under the stack-sorting map. This is a natural generalization of the study of sortable and $2$-stack-sortable permutations since $\mathcal W_1(n)=s^{-1}(\Av_n(21))$ and $\mathcal W_2(n)=s^{-1}(\Av_n(231))$. In fact, Bouvel and Guibert \cite{Bouvel} have already considered stack-sorting preimages of certain classes in their study of permutations that are sortable via multiple stacks and $D_8$ symmetries (we state some of their results in Section \ref{Sec:Back}). Claesson and \'Ulfarsson \cite{Claesson} have also studied this problem in relation to a generalization of classical permutation patterns known as \emph{mesh patterns}, which were introduced\footnote{The idea of a mesh pattern appears earlier under different names. For example, it appears in \cite{Bousquet06,Dulucq,West}. Mesh patterns are sometimes called \emph{barred patterns}.} in \cite{Branden}. They showed that each set of the form $s^{-1}(\Av(\tau^{(1)},\ldots,\tau^{(r)}))$ can be described as the set of permutations avoiding a specific collection of mesh patterns, and they provided an algorithm for computing this collection. From this point of view, the current paper provides a new method for counting permutations that avoid certain mesh patterns. In specific cases, $s^{-1}(\Av(\tau^{(1)},\ldots,\tau^{(r)}))$ is a bona fide permutation class. For example, $s^{-1}(\Av(m(m-1)\cdots 321))$ is a permutation class. Hence, we give a new method for enumerating (or at least estimating) some permutation classes. 

The idea to count the preimages of a permutation under the stack-sorting map  dates back to West, who called $|s^{-1}(\pi)|$ the \emph{fertility} of the permutation $\pi$ and went to great lengths to compute the fertilities of the permutations of the forms \[23\cdots k1(k+1)\cdots n,\quad 12\cdots(k-2)k(k-1)(k+1)\cdots n,\quad\text{and}\quad k12\cdots(k-1)(k+1)\cdots n.\] The very specific forms of these permutations indicates the initial difficulty of computing fertilities. We define the fertility of a set of permutations to be the sum of the fertilities of the permutations in that set. With this terminology, our main goal in this paper is to compute the fertilities of sets of the form $\Av_n(\tau^{(1)},\ldots,\tau^{(r)})$. 

Bousquet-M\'elou \cite{Bousquet} studied permutations with positive fertilities, which she termed \emph{sorted} permutations. She mentioned that it would be interesting to find a method for computing the fertility of any given permutation. This was achieved in much greater generality in \cite{Defant} using new combinatorial objects called ``valid hook configurations." The theory of valid hook configurations was the key ingredient used in \cite{Defant2} in order to improve the best-known upper bounds for $W_3(n)$ and $W_4(n)$, and it will be one of our primary tools in subsequent sections. 

It is important to note that we do not have a thorough understanding of valid hook configurations. This is why an explicit formula for $W_3(n)$ remains out of reach, and it is why enumerating the permutations in $s^{-1}(\Av(\tau^{(1)},\ldots,\tau^{(r)}))$ is still highly nontrivial for many choices of patterns $\tau^{(1)},\ldots,\tau^{(r)}$. However, valid hook configurations still provide a powerful tool for computing fertilities. Recently, the authors of \cite{Defant3} used valid hook configurations to establish connections among permutations with fertility $1$, certain weighted set partitions, and cumulants arising in free probability theory. The current author has also investigated which numbers arise as the fertilities of permutations \cite{Defant4}. 

In Section \ref{Sec:VHCs}, we review the definitions and necessary theorems concerning valid hook configurations. We also prove a theorem that ameliorates the computation of fertilities in many cases. This theorem was stated in \cite{Defant2}, but the proof was omitted because the result was not needed in that paper. We have decided to prove the result here because we will make use of it in our computations. This result is also used in \cite{Defant4}, so it is important that a proof appears in the literature. 

Section \ref{Sec:Back} reviews some facts about generalized patterns and stack-sorting preimages of permutation classes. In Section \ref{Sec:4patterns}, we study the set $s^{-1}(\Av(132,231,312,321))$, which is a permutation class. In Section \ref{Sec5}, we study $s^{-1}(\Av(132,231,321))$ and $s^{-1}(\Av(132,312,321))$, the latter of which is a permutation class. We show that these sets are both enumerated by central binomial coefficients. A corollary of the results in this section actually settles a problem of Bruner \cite{Bruner}. In Section \ref{Sec6}, we consider $s^{-1}(\Av(231,312,321))$, which turns out to be a permutation class. We enumerate this class both directly and by using valid hook configurations, leading to a new identity involving well-studied orderings on integer compositions and integer partitions. Section \ref{Sec7} considers the set $s^{-1}(\Av(132,231,312))$. Finding the fertilities of permutations in $\Av(132,231,312)$ allows us to prove that some of the estimates used in \cite{Defant2} are sharp. In addition, we will find that the permutations in $s^{-1}(\Av(132,231,312))$ are enumerated by the Fine numbers, giving a new interpretation for this well-studied sequence. Section \ref{Sec:312,321} is brief and is merely intended to state that $s^{-1}(\Av(312,321))$ is the permutation class $\Av(3412,3421)$, which Kremer \cite{Kremer} has proven is enumerated by the large Schr\"oder numbers. Section \ref{Sec:132,321} enumerates the permutations in $s^{-1}(\Av(132,321))$. In Section \ref{Sec:Pair}, we prove that $|s^{-1}(\Av_n(132,312))|=|s^{-1}(\Av_n(231,312))|$. Finally, we prove that \[8.4199\leq\lim_{n\to\infty}|s^{-1}(\Av_n(321))|^{1/n}\leq 11.6569\] in Section \ref{Sec:321}. This is notable because $s^{-1}(\Av(321))$ is a permutation class; it is not clear how to obtain such estimates using standard methods for enumerating permutation classes. In most of these sections, we actually refine our counts by enumerating stack-sorting preimages of permutation classes according to the number of descents and according to the number of peaks. 

Our results lead to several fascinating open problems and conjectures. In fact, we believe that this paper opens the door to a vast new collection of enumerative problems. We accumulate these problems and conjectures in Section 11. 

\section{Valid Hook Configurations and Valid Compositions}\label{Sec:VHCs}

In this section, we review some of the theory of valid hook configurations. Our presentation is almost identical to that given in \cite{Defant4}, but we include it here for completeness. Note that the valid hook configurations defined below are, strictly speaking, different from those defined in \cite{Defant} and \cite{Defant2}. For a lengthier discussion of this distinction, see \cite{Defant3}. 

To construct a valid hook configuration, begin by choosing a permutation $\pi=\pi_1\cdots\pi_n\in S_n$. Recall that a descent of $\pi$ is an index $i$ such that $\pi_i>\pi_{i+1}$. Let $d_1<\cdots<d_k$ be the descents of $\pi$. We use the example permutation $3142567$ to illustrate the construction. The \emph{plot} of $\pi$ is the graph displaying the points $(i,\pi_i)$ for $1\leq i\leq n$. The left image in Figure \ref{Fig2} shows the plot of our example permutation. A point $(i,\pi_i)$ is a \emph{descent top} if $i$ is a descent. The descent tops in our example are $(1,3)$ and $(3,4)$. 

\begin{figure}[t]
  \centering
  \subfloat[]{\includegraphics[width=0.2\textwidth]{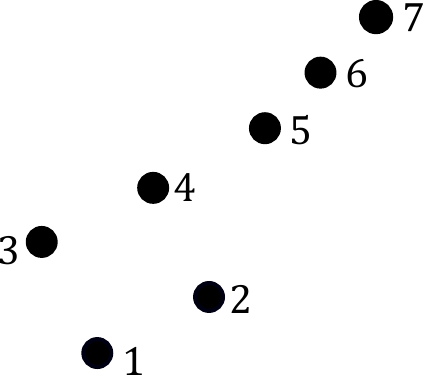}}
  \hspace{1.5cm}
  \subfloat[]{\includegraphics[width=0.2\textwidth]{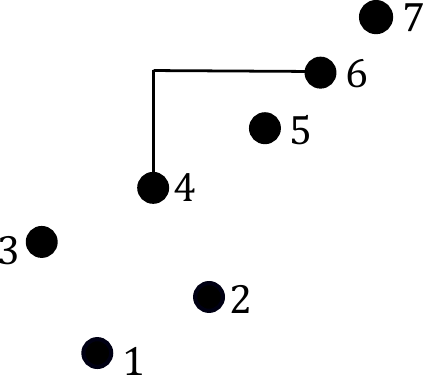}}
  \caption{The left image is the plot of $3142567$. The right images shows this plot along with a single hook.}\label{Fig2}
\end{figure}

A \emph{hook} of $\pi$ is drawn by starting at a point $(i,\pi_i)$ in the plot of $\pi$, moving vertically upward, and then moving to the right until reaching another point $(j,\pi_j)$. In order for this to make sense, we must have $i<j$ and $\pi_i<\pi_j$. The point $(i,\pi_i)$ is called the \emph{southwest endpoint} of the hook, while $(j,\pi_j)$ is called the \emph{northeast endpoint}. The right image in Figure \ref{Fig2} shows our example permutation with a hook that has southwest endpoint $(3,4)$ and northeast endpoint $(6,6)$.

\begin{definition}\label{Def2}
A \emph{valid hook configuration} of $\pi$ is a configuration of hooks drawn on the plot of $\pi$ subject to the following constraints: 

\begin{enumerate}[1.]
\item The southwest endpoints of the hooks are precisely the descent tops of the permutation. 

\item A point in the plot cannot lie directly above a hook. 

\item Hooks cannot intersect each other except in the case that the northeast endpoint of one hook is the southwest endpoint of the other. 
\end{enumerate}  
\end{definition}

\begin{figure}[t]
\begin{center}
\includegraphics[width=.7\linewidth]{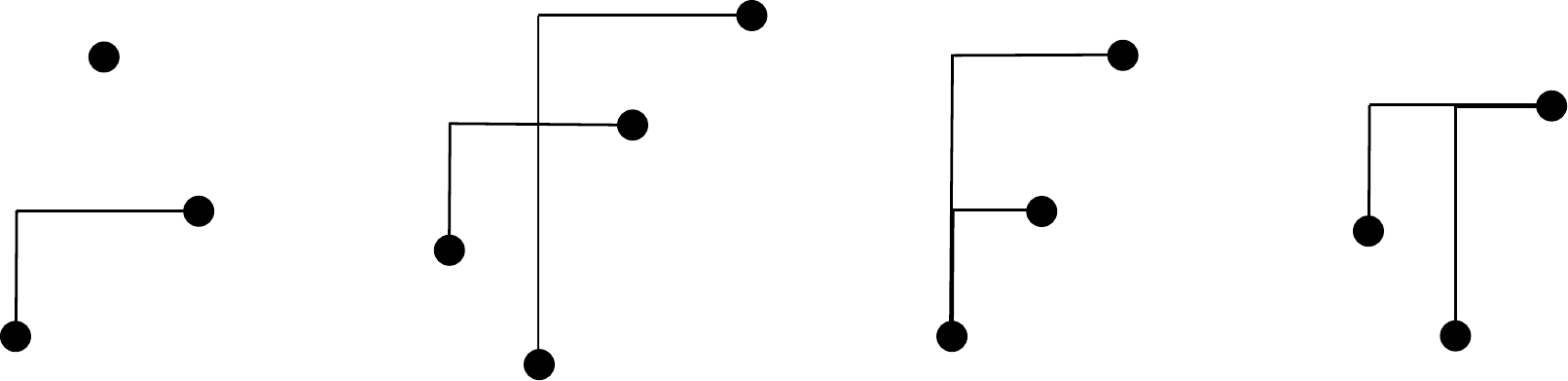}
\caption{Four configurations of hooks that are forbidden in a valid hook configuration.}
\label{Fig3}
\end{center}  
\end{figure}

\begin{figure}[t]
\begin{center}
\includegraphics[width=.7\linewidth]{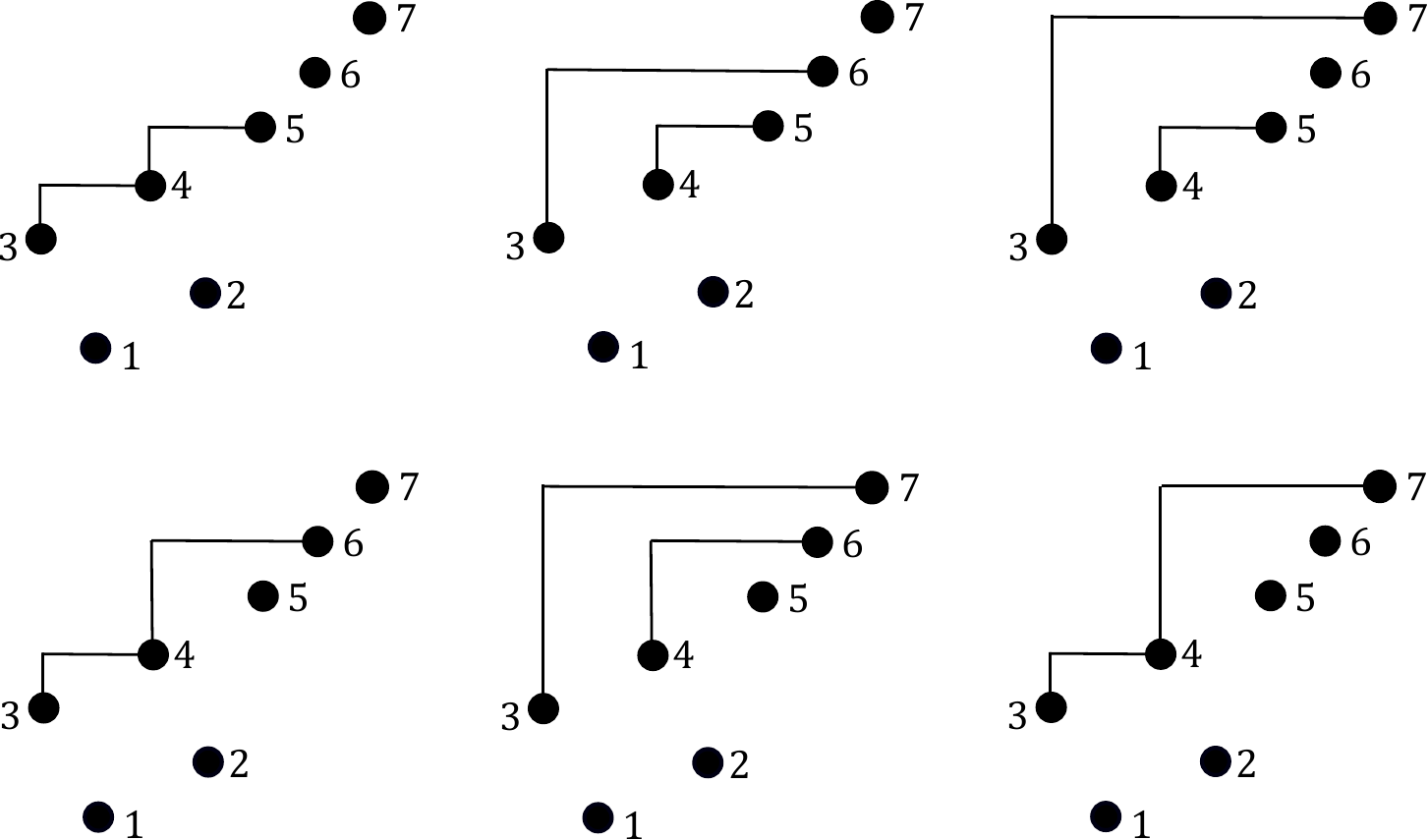}
\caption{All of the valid hook configurations of $3142567$.}
\label{Fig4}
\end{center}  
\end{figure}

Figure \ref{Fig3} shows four placements of hooks that are forbidden by Conditions 2 and 3.
Figure \ref{Fig4} shows all of the valid hook configurations of $3142567$. 
Observe that the total number of hooks in a valid hook configuration of $\pi$ is exactly $k$, the number of descents of $\pi$. Because the southwest endpoints of the hooks are the points $(d_i,\pi_{d_i})$, there a natural ordering of the hooks. Namely, the $i^\text{th}$ hook is the hook whose southwest endpoint is $(d_i,\pi_{d_i})$. We can write a valid hook configuration of $\pi$ concisely as a $k$-tuple $\mathcal H=(H_1,\ldots,H_k)$, where $H_i$ is the $i^\text{th}$ hook. 

A valid hook configuration of $\pi$ induces a coloring of the plot of $\pi$. To color the plot, draw a ``sky" over the entire diagram and color the sky blue. Assign arbitrary distinct colors other than blue to the $k$ hooks in the valid hook configuration. There are $k$ northeast endpoints of hooks, and these points remain uncolored. However, all of the other $n-k$ points will be colored. In order to decide how to color a point $(i,\pi_i)$ that is not a northeast endpoint, imagine that this point looks directly upward. If this point sees a hook when looking upward, it receives the same color as the hook that it sees. If the point does not see a hook, it must see the sky, so it receives the color blue. However, if $(i,\pi_i)$ is the southwest endpoint of a hook, then it must look around (on the left side of) the vertical part of that hook. See Figure \ref{Fig5} for the colorings induced by the valid hook configurations in Figure \ref{Fig4}. Note that the leftmost point $(1,3)$ is blue in each of these colorings because this point looks around the first (red) hook and sees the sky. 

To summarize, we started with a permutation $\pi$ with exactly $k$ descents. We chose a valid hook configuration of $\pi$ by drawing $k$ hooks according to Conditions 1, 2, and 3 in Definition \ref{Def2}. This valid hook configuration then induced a coloring of the plot of $\pi$. Specifically, $n-k$ points were colored, and $k+1$ colors were used (one for each hook and one for the sky). Let $q_i$ be the number of points colored the same color as the $i^\text{th}$ hook, and let $q_0$ be the number of points colored blue (the color of the sky). Then $(q_0,\ldots,q_k)$ is a composition\footnote{Throughout this paper, a \emph{composition of }$b$ \emph{into} $a$ \emph{parts} is an $a$-tuple of \emph{positive} integers whose sum is $b$.} of $n-k$ into $k+1$ parts; we say the valid hook configuration \emph{induces} this composition. Let $\mathcal V(\pi)$ be the set of compositions induced by valid hook configurations of $\pi$. We call the elements of $\mathcal V(\pi)$ the \emph{valid compositions} of $\pi$.  

\begin{figure}[t]
\begin{center}
\includegraphics[width=.7\linewidth]{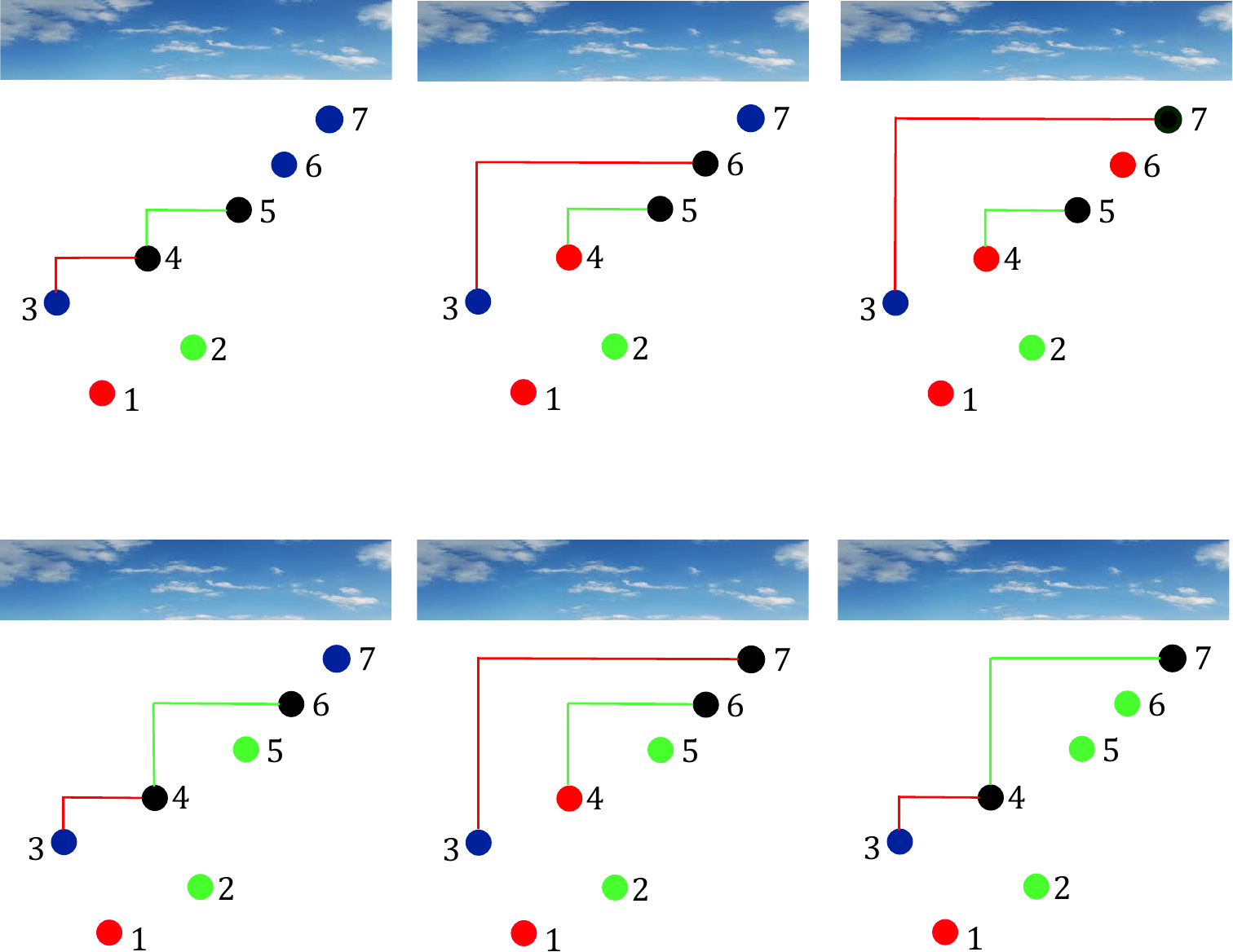}
\caption{The different colorings induced by the valid hook configurations of $3142567$.}
\label{Fig5}
\end{center}  
\end{figure}

Let $C_j=\frac{1}{j+1}{2j\choose j}$ denote the $j^\text{th}$ Catalan number. Given a composition $(q_0,\ldots,q_k)$, let \[C_{(q_0,\ldots,q_k)}=\prod_{t=0}^kC_{q_t}.\] The following theorem explains why valid hook configurations are so useful when studying the stack-sorting map. 

\begin{theorem}[\!\!\cite{Defant}]\label{Thm5}
If $\pi\in S_n$ has exactly $k$ descents, then the fertility of $\pi$ is given by the formula \[|s^{-1}(\pi)|=\sum_{(q_0,\ldots,q_k)\in\mathcal V(\pi)}C_{(q_0,\ldots,q_k)}.\] 
\end{theorem}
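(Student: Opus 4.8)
The plan is to construct a bijection between $s^{-1}(\pi)$ and the disjoint union, over valid hook configurations $\mathcal H$ of $\pi$, of the products $\mathcal B_{q_0^{\mathcal H}}\times\cdots\times\mathcal B_{q_k^{\mathcal H}}$, where $(q_0^{\mathcal H},\ldots,q_k^{\mathcal H})$ is the composition induced by $\mathcal H$ and $\mathcal B_q$ denotes the set of unlabeled rooted binary plane trees on $q$ nodes. Since $|\mathcal B_q|=C_q$, this gives $|s^{-1}(\pi)|=\sum_{\mathcal H}\prod_{t}C_{q_t^{\mathcal H}}$, which a short additional argument (checking which compositions are induced and with what multiplicity) converts into the stated sum over $\mathcal V(\pi)$.

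I would first reduce to the case in which $\pi$ ends in its largest entry $n$. If $\pi_m=n$ with $m<n$, then $\pi_m>\pi_{m+1}$, so $(m,n)$ is a descent top, and any valid hook configuration would require a hook with southwest endpoint $(m,n)$ and northeast endpoint of height larger than $n$ -- impossible -- so $\mathcal V(\pi)=\emptyset$. Also, the entry $n$ of any $\sigma\in S_n$ is pushed onto the stack and stays there until the procedure terminates, so $s(\sigma)$ always ends in $n$; hence $s^{-1}(\pi)=\emptyset$ as well, and both sides vanish. So we may assume $\pi=\pi_1\cdots\pi_{n-1}n$, which has the same descents $d_1<\cdots<d_k$ as $\pi_1\cdots\pi_{n-1}$.

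The main tool is the realization of $s$ through decreasing binary plane trees: $s^{-1}(\pi)$ is in bijection with the decreasing binary plane trees on $n$ nodes whose postorder reading is $\pi$. Since in postorder the root of each subtree is visited last and, in a decreasing tree, carries that subtree's largest label, such trees correspond exactly to the binary tree shapes $\mathcal T$ for which, after labeling the node in postorder position $i$ with $\pi_i$, one has $\pi_i>\pi_j$ whenever the position-$j$ node descends from the position-$i$ node. Given such a tree $T$, I would draw a hook for each descent top $\pi_{d_i}$, running from $(d_i,\pi_{d_i})$ to the plot point of the parent of the position-$d_i$ node: the parent occurs later in postorder and has a larger label, so the hook is legitimate; the positions strictly between a node and its parent in postorder form a (possibly empty) subtree hanging from that parent and hence carry labels smaller than the parent's, so no point lies above the hook (Condition~2); and the nesting of subtrees of $T$ yields Condition~3. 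This assigns to $T$ a valid hook configuration $\mathcal H(T)$. The central claim is that the fiber of $T\mapsto\mathcal H(T)$ over a fixed $\mathcal H$ is in natural bijection with $\mathcal B_{q_0^{\mathcal H}}\times\cdots\times\mathcal B_{q_k^{\mathcal H}}$: the $q_t$ points of color $t$ form an increasing subsequence of $\pi$ and span a subtree of $T$ whose point of attachment is dictated by $\mathcal H$, and every binary tree shape on an increasing subsequence extends to a decreasing tree, so the shape of that subtree may be chosen freely. Summing over all valid hook configurations of $\pi$ then yields the formula.

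The main obstacle is this fiber analysis. I expect the bulk of the work to lie in proving that the three axioms of Definition~\ref{Def2} characterize \emph{precisely} those configurations of hooks that arise from some decreasing tree with postorder $\pi$, that the color classes really are increasing subsequences of $\pi$, and that, once $\mathcal H$ is fixed, the binary structure on each color class is entirely unconstrained while the rest of $T$ is forced; this is the genuine combinatorial content of the theory of valid hook configurations. As an alternative, one could induct on $n$: West's recursion $s(\pi)=s(L)s(R)n$ gives $|s^{-1}(\pi)|=\sum_{j=0}^{n-1}|s^{-1}(\pi_1\cdots\pi_j)|\,|s^{-1}(\pi_{j+1}\cdots\pi_{n-1})|$, the sum running over the size $j$ of $L$ (after standardizing the two factors), and one would verify that $\sum_{\mathcal V(\pi)}\prod_t C_{q_t}$ satisfies the same recursion by analyzing how the hooks of a valid hook configuration of $\pi=\pi_1\cdots\pi_{n-1}n$ interact with the final point $(n,n)$ -- but the same structural facts are needed to justify this decomposition.
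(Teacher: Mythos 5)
This statement is quoted from \cite{Defant} (``Postorder preimages'') and the present paper gives no proof of it, so your proposal must stand on its own; and its overall framework is indeed the one used in that source: identify $s^{-1}(\pi)$ with the decreasing binary plane trees whose postorder reading is $\pi$, reduce to the case $\pi_n=n$, and send a tree $T$ to the configuration $\mathcal H(T)$ obtained by joining each descent top to the plot point of its parent. That much of your outline (including the verification of Conditions 1--3 of Definition \ref{Def2} for $\mathcal H(T)$) is correct. The problem is that the step you yourself flag as the ``main obstacle'' --- that the fiber of $T\mapsto\mathcal H(T)$ over a fixed valid hook configuration is $\mathcal B_{q_0}\times\cdots\times\mathcal B_{q_k}$, and that every valid hook configuration is attained --- is the entire content of the theorem, and the structural mechanism you offer for it is false as stated. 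Take $\pi=21345$ and the valid hook configuration whose single hook runs from $(1,2)$ to $(3,3)$; the induced composition is $(3,1)$, the blue points being $(1,2),(4,4),(5,5)$. In every tree of the fiber, the node carrying $2$ (postorder position $1$) is a child of the node carrying $3$ (position $3$), which is the uncolored northeast endpoint; hence the blue nodes are \emph{not} a subtree of $T$, and the $C_3C_1=5$ trees in the fiber do not arise by freely choosing a binary tree on the blue vertices with the rest of $T$ forced. Rather, the five choices are the arrangements of the three units consisting of the nodes $4$, $5$, and the entire subtree rooted at $3$ (which contains the non-blue nodes $1$ and $2$). So the Catalan product is correct, but it comes from a subtler correspondence in which whole subtrees hanging from northeast endpoints are treated as single contracted units; making this precise, proving the color classes are increasing, and proving surjectivity onto all valid hook configurations is exactly the work the cited proof does and your sketch omits.

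Your fallback suggestion --- induct via $s(LnR)=s(L)s(R)n$, so that $|s^{-1}(\pi)|=\sum_j|s^{-1}(\pi_1\cdots\pi_j)|\,|s^{-1}(\pi_{j+1}\cdots\pi_{n-1})|$ (with the factors suitably standardized), and show the right-hand side of Theorem \ref{Thm5} satisfies the same recursion --- is a legitimate alternative route (it is close in spirit to later ``decomposition lemma'' arguments), but as you concede it requires precisely the same missing analysis of how hooks of $\pi$ split relative to the final point $(n,n)$, so it does not close the gap either. In short: correct set-up and correct target, but the heart of the proof is asserted rather than proved, and the asserted fiber structure needs to be repaired before it can be proved.
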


\begin{example}\label{Exam1}
The permutation $3142567$ has six valid hook configurations, which are shown in Figure \ref{Fig4}. The colorings induced by these valid hook configurations are portrayed in Figure \ref{Fig5}. The valid compositions induced by these valid hook configurations are (reading the first row before the second row, each from left to right) \[(3,1,1),\quad (2,2,1),\quad(1,3,1),\quad(2,1,2),\quad(1,2,2),\quad(1,1,3).\] It follows from Theorem \ref{Thm5} that \[|s^{-1}(3142567)|=C_{(3,1,1)}+C_{(2,2,1)}+C_{(1,3,1)}+C_{(2,1,2)}+C_{(1,2,2)}+C_{(1,1,3)}=27.\] 
\end{example}

It is also possible to refine Theorem \ref{Thm5} according to certain permutation statistics such as the number of descents and the number of peaks\footnote{Theorem \ref{Thm7} was originally stated in \cite{Defant} in terms of ``valleys" instead of peaks, but the formulation we give here is equivalent.}. Recall that a peak of a permutation $\pi=\pi_1\cdots\pi_n\in S_n$ is an index $i$ such that $\pi_{i-1}<\pi_i>\pi_{i+1}$. In what follows, we consider the \emph{Narayana numbers} $N(i,j)=\frac 1i{i\choose j}{i\choose j-1}$. Let us also define \[V(i,j)=2^{i-2j+1}{i-1\choose 2j-2}C_{j-1}.\] It is known\footnote{See sequence A091894	in the Online Encyclopedia of Integer Sequences \cite{OEIS}.} that $V(i,j)$ is the number of decreasing binary plane trees with $i$ vertices and $j$ leaves, and this is actually why these numbers arise in this context. 

\begin{theorem}[\!\!\cite{Defant}]\label{Thm7}
If $\pi\in S_n$ has exactly $k$ descents, then the number of permutations in $s^{-1}(\pi)$ with exactly $m$ descents is \[\sum_{(q_0,\ldots,q_k)\in\mathcal V(\pi)}\sum_{j_0+\cdots+j_k=m+1}\prod_{t=0}^kN(q_t,j_t),\] where the second sum ranges over positive integers $j_0,\ldots,j_k$ that sum to $m+1$. 
The number of permutations in $s^{-1}(\pi)$ with exactly $m$ peaks is
\[\sum_{(q_0,\ldots,q_k)\in\mathcal V(\pi)}\sum_{j_0+\cdots+j_k=m+1}\prod_{t=0}^k V(q_t,j_t),\] where the second sum ranges over positive integers $j_0,\ldots,j_k$ that sum to $m+1$.   
\end{theorem}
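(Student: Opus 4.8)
\noindent\emph{Proof idea.}\quad The plan is to refine the bijection that underlies Theorem~\ref{Thm5}. Recall from \cite{Defant} that the proof of that theorem sets up a bijection between $s^{-1}(\pi)$ and the set of pairs $\big(\mathcal H,(\sigma^{(0)},\dots,\sigma^{(k)})\big)$, where $\mathcal H$ is a valid hook configuration of $\pi$ inducing the composition $(q_0,\dots,q_k)$ and, for each of the $k+1$ colour classes of $\mathcal H$, the permutation $\sigma^{(t)}$ is a sortable permutation of length $q_t$; by Theorem~\ref{Thm1} these sortable permutations are exactly the $231$-avoiding permutations of length $q_t$, of which there are $C_{q_t}$, which is why the factor $C_{(q_0,\dots,q_k)}$ appears in Theorem~\ref{Thm5}. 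To prove Theorem~\ref{Thm7} it suffices to check that this bijection transports the two statistics according to the rules
\[\operatorname{des}(\sigma)=k+\sum_{t=0}^{k}\operatorname{des}(\sigma^{(t)})\qquad\text{and}\qquad\operatorname{peak}(\sigma)=k+\sum_{t=0}^{k}\operatorname{peak}(\sigma^{(t)}).\]
Indeed, granting the first rule, the number of $\sigma\in s^{-1}(\pi)$ with exactly $m$ descents is, for each $\mathcal H$, the number of tuples $(\sigma^{(0)},\dots,\sigma^{(k)})$ with $\sum_t\operatorname{des}(\sigma^{(t)})=m-k$; invoking the classical fact that the number of $231$-avoiding permutations of $[q]$ with exactly $d$ descents is $N(q,d+1)$ and reindexing by $j_t=\operatorname{des}(\sigma^{(t)})+1$ (so $j_0+\cdots+j_k=m+1$), this count becomes $\sum_{j_0+\cdots+j_k=m+1}\prod_{t=0}^{k}N(q_t,j_t)$. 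Summing over $\mathcal H$ and recalling, as in Theorem~\ref{Thm5}, that this equals the sum over $\mathcal V(\pi)$ in the statement yields the descent formula. The peak formula is obtained in the same way from the second rule, now using that the number of $231$-avoiding permutations of $[q]$ with exactly $p$ peaks is $V(q,p+1)$; this last fact follows from the interpretation of $V(i,j)$ recalled above, since a sortable permutation of length $q$ is encoded by its decreasing binary plane tree, and the number of leaves of such a tree is one more than the number of peaks of the permutation.

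To prove the two transport rules I would pass to decreasing binary plane trees. Write $T(w)$ for the decreasing binary plane tree of a permutation $w$; since the stack-sorting map is post-order traversal, $\sigma\in s^{-1}(\pi)$ exactly when the post-order reading of $T(\sigma)$ equals $\pi$. I would first record two elementary lemmas, valid for an arbitrary permutation $w$: (i) $\operatorname{des}(w)$ equals the number of nodes of $T(w)$ with a nonempty right subtree (the in-order successor of a node has smaller label than that node precisely when the node has a right child), and (ii) $\operatorname{peak}(w)+1$ equals the number of leaves of $T(w)$ (a short induction on the decomposition $w=LnR$, for which $T(w)$ has root $n$, left subtree $T(L)$, and right subtree $T(R)$). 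Next I would extract from the construction in the proof of Theorem~\ref{Thm5} the following structural description of $T(\sigma)$: it is built by grafting the trees $T(\sigma^{(0)}),\dots,T(\sigma^{(k)})$ onto the leaves of a \emph{skeleton} plane binary tree $S_{\mathcal H}$ that depends only on $\mathcal H$; this skeleton has one leaf per colour class, hence $k+1$ leaves, and it has $k$ internal nodes since the grafted blocks account for the remaining $n-k$ nodes of $T(\sigma)$, so (a binary tree with $k$ internal nodes and $k+1$ leaves being full) it is a full binary tree whose $k$ internal nodes correspond to the $k$ hooks of $\mathcal H$, i.e.\ to the $k$ descents of $\pi$. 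Granting this description, every internal node of $S_{\mathcal H}$ has a right child, every leaf of $S_{\mathcal H}$ gets replaced, and grafting does not change the subtree below any surviving node; hence the nodes of $T(\sigma)$ with a nonempty right subtree are exactly the $k$ internal nodes of $S_{\mathcal H}$ together with the nodes of each $T(\sigma^{(t)})$ having a nonempty right subtree, and the leaves of $T(\sigma)$ are exactly the union of the leaves of the trees $T(\sigma^{(t)})$. Combining this with (i) gives $\operatorname{des}(\sigma)=k+\sum_t\operatorname{des}(\sigma^{(t)})$, and combining it with (ii) gives that $T(\sigma)$ has $\sum_t(\operatorname{peak}(\sigma^{(t)})+1)=(k+1)+\sum_t\operatorname{peak}(\sigma^{(t)})$ leaves, hence $\operatorname{peak}(\sigma)=k+\sum_t\operatorname{peak}(\sigma^{(t)})$.

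The main obstacle is the structural description in the previous paragraph: one must verify that the bijection of Theorem~\ref{Thm5} genuinely assembles $\sigma$ by grafting one sortable block per colour class onto a full binary skeleton with $k$ internal nodes, that the hooks of $\mathcal H$ match up bijectively with those internal nodes, and --- the delicate point --- that no descent or peak of $\sigma$ is created or destroyed at a graft point. All of this is implicit in the proof of Theorem~\ref{Thm5}; once it is made precise, the reindexing above together with the standard identities $\sum_{d}N(q,d+1)=\sum_{p}V(q,p+1)=C_q$ completes the argument and, as a consistency check, recovers Theorem~\ref{Thm5} upon summing over $m$.
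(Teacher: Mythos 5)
Your overall strategy is sound, and it is worth noting that this paper does not actually prove Theorem \ref{Thm7}: the result is quoted from \cite{Defant}, and your plan --- decompose each $\sigma\in s^{-1}(\pi)$ via a valid hook configuration into one sortable ``block'' per colour class, transport descents and peaks blockwise, and then use that $N(q,d+1)$ counts $231$-avoiders of length $q$ with $d$ descents and $V(q,p+1)$ counts them with $p$ peaks --- is essentially the refinement that the cited source carries out. The two transport identities you state, $\operatorname{des}(\sigma)=k+\sum_t\operatorname{des}(\sigma^{(t)})$ and $\operatorname{peak}(\sigma)=k+\sum_t\operatorname{peak}(\sigma^{(t)})$, are indeed true, and your tree lemmas (i) and (ii) are correct.

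However, the structural lemma you rely on to prove them is false, and it is exactly the step you defer as ``the delicate point.'' Take $\pi=2134$ (so $k=1$) and $\sigma=4231\in s^{-1}(\pi)$. The decreasing binary plane tree $T(4231)$ has root $4$ with an \emph{empty} left subtree, whereas any tree obtained by grafting nonempty blocks onto the two leaves of the unique full binary skeleton with one internal node has a root with two nonempty subtrees; so $T(\sigma)$ is not assembled by grafting the block trees onto the leaves of a $k$-internal-node skeleton. What actually happens is that the uncoloured node $\pi_{b_i}$ (the northeast endpoint of $H_i$) has the descent top $\pi_{d_i}$ as its left child and the nodes lying under $H_i$ as its right subtree, and the colour classes need not be connected subtrees at all: in the example above, for the hook with northeast endpoint $(3,3)$ the sky-coloured class is $\{2,4\}$, which contains both the root $4$ and a grandchild of the root, straddling the uncoloured node $3$. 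Consequently your argument that ``grafting does not change the subtree below any surviving node'' has nothing to attach to, and the transport rules require a different verification, e.g.: each of the $k$ northeast endpoints has both children in $T(\sigma)$ (contributing $k$ to both statistics), and a coloured node has a right (resp.\ left) child in $T(\sigma)$ if and only if it has one in its induced block tree, the key observation being that when a child of a coloured node is a northeast endpoint $\pi_{b_j}$, the left child of $\pi_{b_j}$ is the descent top $\pi_{d_j}$, which lies in the \emph{same} colour class as the original node. A smaller but genuine slip: you justify ``$231$-avoiders of length $q$ with $p$ peaks are counted by $V(q,p+1)$'' by combining the footnote's interpretation of $V$ with the (injective, not surjective) encoding of sortable permutations by their trees; since $\sum_j V(q,j)=C_q$ rather than $q!$, the numbers $V(q,j)$ count binary plane tree \emph{shapes} (equivalently sortable permutations) with $j$ leaves, and the fact you need, while true, is not delivered by the argument as written.
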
    

We will often make implicit use of the following result, which is Lemma 3.1 in \cite{Defant2}. 

\begin{theorem}[\!\!\cite{Defant2}]\label{Thm6}
Each valid composition of a permutation $\pi\in S_n$ is induced by a unique valid hook configuration of $\pi$. 
\end{theorem}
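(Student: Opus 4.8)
The statement to prove is Theorem \ref{Thm6}: each valid composition of $\pi \in S_n$ is induced by a \emph{unique} valid hook configuration.

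\textbf{Plan of proof.} The existence of \emph{some} valid hook configuration inducing a given valid composition is immediate from the definition of $\mathcal{V}(\pi)$, so the content of the theorem is the uniqueness assertion. The plan is to show that a valid hook configuration $\mathcal{H} = (H_1,\ldots,H_k)$ of $\pi$ can be completely recovered from the coloring it induces (equivalently, that the hooks themselves are determined once we know which points receive which of the $k+1$ colors), and then, separately, to argue that the coloring is determined by the induced composition $(q_0,\ldots,q_k)$ together with the fixed data of $\pi$. Since the second half is the genuinely delicate part, I expect to spend most of the argument there.

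\textbf{Recovering hooks from the coloring.} Fix the coloring. The southwest endpoint of $H_i$ is forced: it is the $i^{\text{th}}$ descent top $(d_i, \pi_{d_i})$, by Condition 1. To locate the northeast endpoint of $H_i$, I would use the coloring rule: the points lying immediately below the horizontal segment of $H_i$ (looking upward, they see $H_i$) are exactly the points colored with $H_i$'s color, together with the southwest endpoint $(d_i,\pi_{d_i})$ itself, which looks around the vertical segment. The northeast endpoint $(j,\pi_j)$ is then the point directly to the right of, and at the same height as, the top of $H_i$'s vertical segment; concretely, among the points to the right of $(d_i,\pi_{d_i})$ with value exceeding $\pi_{d_i}$, it is the first one encountered by the horizontal sweep, and this is pinned down by the requirement that every point strictly between $(d_i,\pi_{d_i})$ and $(j,\pi_j)$ in position that lies below height $\pi_j$ must be colored by $H_i$ (Condition 2 forbids any point directly above a hook). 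I would make this precise by induction on $i$ from $k$ down to $1$, or by a direct geometric argument, concluding that $\mathcal{H}$ is uniquely determined by its induced coloring.

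\textbf{Recovering the coloring from the composition.} This is the main obstacle. Given only the multiset sizes $q_0,\ldots,q_k$, I must show there is at most one assignment of colors to points that can arise from a valid hook configuration. The key structural fact is that the coloring refines a natural left-to-right block decomposition of the plot. I would argue that the set of points of each color is an \emph{interval of consecutive positions} on the plot once one accounts for the nesting of hooks — more precisely, reading the points of $\pi$ from left to right, the colors are encountered in a way governed by a well-parenthesized structure determined by the descent tops and the forced northeast endpoints, so that knowing how many points get each color forces which points get each color. I would set this up by induction on the number of descents $k$: peel off the first hook $H_1$, whose southwest endpoint is the leftmost descent top; its color class, the blue (sky) class restricted to the relevant region, and the remaining classes split the plot into a bounded region governed by $H_1$ and an outer region, each of which is itself (the plot of) a permutation with fewer descents to which the inductive hypothesis applies. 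Matching up the part sizes $q_i$ with the two regions is forced because the region under $H_1$ consists of a prefix of consecutive positions whose length is determined, and the composition restricted to that region is a sub-composition whose total is the number of points in that region. Carefully verifying that this splitting is canonical — that there is no ambiguity in how the parts $q_1,\ldots,q_k$ distribute between the inside and outside of $H_1$ — is where the argument must be most careful, and it is exactly the place where Conditions 2 and 3 of Definition \ref{Def2} are used in full force.
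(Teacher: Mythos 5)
Your first step (a valid hook configuration is determined by the coloring it induces) is believable, though your recipe for locating the northeast endpoint is stated too loosely: it is not true that every point between the endpoints lying below the hook must receive $H_i$'s color, since points under nested hooks receive other colors, and for the same reason your assertion that each color class is ``an interval of consecutive positions'' is false as stated (points of $H_1$'s color can occur both before and after a hook nested beneath $H_1$). These are repairable. The genuine gap is in your second step, and it is exactly the step you defer: you claim that ``the region under $H_1$ consists of a prefix of consecutive positions whose length is determined,'' but its length is $q_1+\cdots+q_p+(p-1)$, where $p-1$ is the number of hooks nested under $H_1$ --- and $p$ is part of the unknown configuration, not part of the data $(q_0,\ldots,q_k)$. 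A priori two configurations could nest different numbers of hooks under $H_1$, place the northeast endpoint of $H_1$ at different positions, and still induce the same composition; ruling this out is the entire content of the theorem, and no counting of part sizes alone does it. Your recursion also has an unaddressed technical wrinkle: after peeling off $H_1$, the outer region contains the uncolored point $(b_1,\pi_{b_1})$ and need not have the right descent structure to be an instance of the same problem.

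The paper's argument (embedded in the proof of Theorem \ref{Thm4}, following \cite{Defant2}) avoids the splitting issue entirely by working from the \emph{last} hook backwards rather than peeling off the first one. Given $H_{\ell+1},\ldots,H_k$, let $Z_\ell$ be the set of positions greater than $d_\ell$ that are neither northeast endpoints of those hooks nor lie below them, listed in increasing order as $z_\ell(1)<z_\ell(2)<\cdots$. The points that see $H_\ell$ when looking upward are precisely the elements of $Z_\ell$ preceding its northeast endpoint, so the composition forces $b_\ell=z_\ell(q_\ell+1)$; downward induction on $\ell$ then determines the whole configuration, with no ambiguity about how parts distribute among regions. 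If you want to salvage a left-to-right decomposition, you would still need an analogue of this forcing lemma to pin down $b_1$ (equivalently $p$) before you can recurse, so you should either prove that forcing directly or switch to the right-to-left construction.
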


In her study of sorted permutations (permutations with positive fertilities), Bousquet-M\'elou introduced the notion of the \emph{canonical tree} of a permutation and showed that the shape of a permutation's canonical tree determines that permutation's fertility \cite{Bousquet}. She asked for an explicit method for computing the fertility of a permutation from its canonical tree. The current author reformulated the notion of a canonical tree in the language of valid hook configurations, defining the \emph{canonical valid hook configuration} of a permutation \cite{Defant2}. Here, we describe a method for computing a permutation's fertility from its canonical valid hook configuration. Specifically, we show how to describe all valid compositions of $\pi$ from the canonical valid hook configuration. This method was stated in \cite{Defant}, but the proof was omitted. We include the proof here because we will make use of this method. In addition, one wishing to program a computer to compute fertilities will likely find that this method is easier to implement than a method that involves finding all valid hook configurations of a permutation directly.  

As before, let $d_1<\cdots<d_k$ be the descents of $\pi$. We will construct the canonical valid hook configuration of $\pi$, which we denote by $\mathcal H^*=(H_1^*,\ldots,H_k^*)$. That is, $H_i^*$ is the hook in $\mathcal H^*$ whose southwest endpoint is $(d_i,\pi_{d_i})$. In order to define $\mathcal H^*$, we need to choose the northeast endpoints of the hooks $H_1^*,\ldots,H_k^*$. To start, consider all possible points that could be northeast endpoints of $H_k^*$; these are precisely the points above and to the right of $(d_k,\pi_{d_k})$. Among these points, choose the leftmost one (equivalently, the lowest one) to be the northeast endpoint of $H_k^*$. Next, consider all possible points that could be northeast endpoints of $H_{k-1}^*$ (given that we already know $H_k^*$ and that we need to satisfy the conditions in Definition \ref{Def2}). Among these points, choose the leftmost one to be the northeast endpoint of $H_{k-1}^*$. Continue in this fashion, always choosing the leftmost possible point as the northeast endpoint of $H_\ell^*$ given that $H_{\ell+1}^*,\ldots,H_k^*$ have already been chosen. If it is ever impossible to find a northeast endpoint for $H_\ell^*$, then $\pi$ has no valid hook configurations (meaning $\pi$ is not sorted). Otherwise, we obtain the canonical valid hook configuration of $\pi$ from this process. Figure \ref{Fig7} shows the canonical valid hook configuration of a permutation. 

\begin{figure}[t]
\begin{center} 
\includegraphics[height=6.0cm]{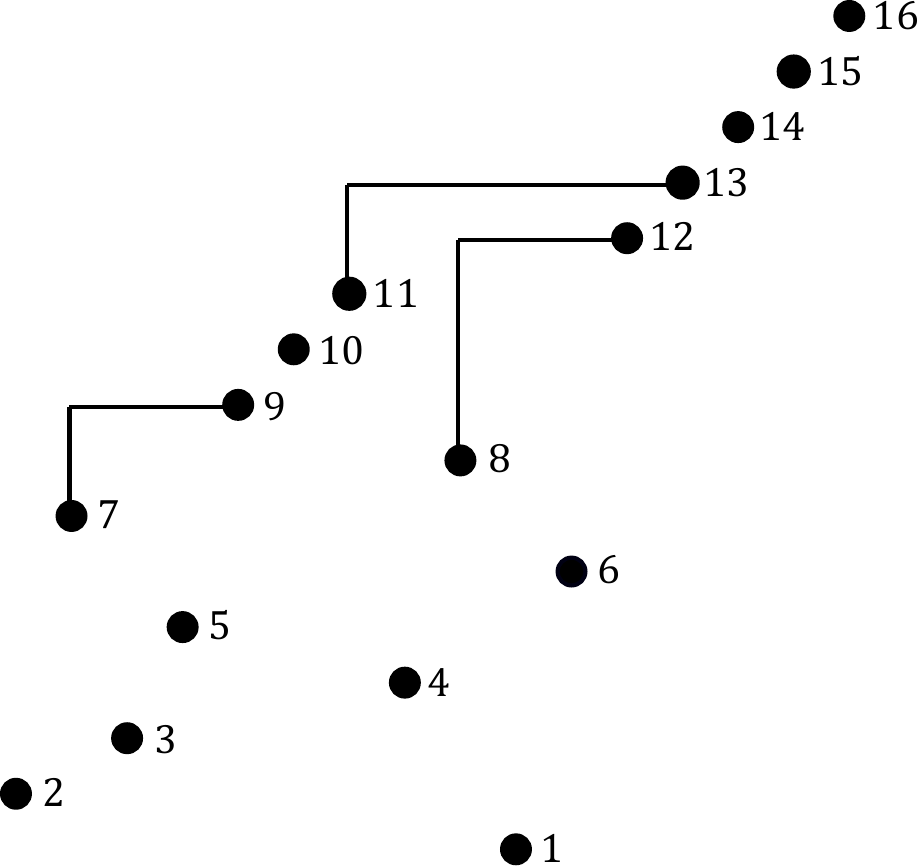}
\end{center}
\captionof{figure}{The canonical valid hook configuration of \\$2\,\,7\,\,3\,\,5\,\,9\,\,10\,\,11\,\,4\,\,8\,\,1\,\,6\,\,12\,\,13\,\,14\,\,15\,\,
16$. } \label{Fig7}
\end{figure}

Let us assume $\pi$ is sorted so that it has a canonical valid hook configuration $\mathcal H^*=(H_1^*,\ldots,H_k^*)$. Recall that $d_1<\cdots<d_k$ are the descents of $\pi$. We make the conventions $d_0=0$ and $d_{k+1}=n$. For $1\leq i\leq k+1$, the $i^\text{th}$ ascending run of $\pi$ is the string $\pi_{d_{i-1}+1}\cdots\pi_{d_i}$. We use $\mathcal H^*$ to define certain parameters as follows. 

\begin{itemize}
\item Let $(b_i^*,\pi_{b_i^*})$ be the northeast endpoint of $H_i^*$. 
\item Let $(q_0^*,\ldots,q_k^*)$ be the valid composition of $\pi$ induced from $\mathcal H^*$.
\item For $1\leq i\leq k$, define $e_i$ by requiring that $\pi_{b_i^*}$ is in the $e_i^\text{th}$ ascending run of $\pi$. In other words, $d_{e_i-1}<b_i^*\leq d_{e_i}$. Furthermore, put $e_0=k+1$. 
\item Let $\alpha_j=\vert\{i\in\{1,\ldots,k\}\colon e_i=j\}\vert$ denote the number of northeast endpoints $(b_i^*,\pi_{b_i^*})$ such that $\pi_{b_i^*}$ is in the $j^{\text{th}}$ ascending run of $\pi$. 
\end{itemize}

\begin{example}\label{Exam}
Let $\pi=2\,\,7\,\,3\,\,5\,\,9\,\,10\,\,11\,\,4\,\,8\,\,1\,\,6\,\,12\,\,13\,\,14\,\,15\,\,
16$ be the permutation whose canonical valid hook configuration appears in Figure \ref{Fig7}. We have $d_0=0$, $d_1=2$, $d_2=7$, $d_3=9$, and $d_4=16$. Furthermore,

\begin{itemize}
\item $(b_1^*,b_2^*,b_3^*)=(5,13,12)$;
\item $(q_0^*,q_1^*,q_2^*,q_3^*)=(7,2,2,2)$;
\item $(e_0,e_1,e_2,e_3)=(4,2,4,4)$; 
\item $(\alpha_1,\alpha_2,\alpha_3,\alpha_4)=(0,1,0,2)$. 
\end{itemize}  
\end{example}

We are now in a position to state and prove the theorem that allows one to combine the above pieces of data in order to describe the valid compositions of $\pi$. The reader who is interested only in the enumerative results of Section \ref{Sec:Back} can safely bypass the proof of the following theorem. 

\begin{theorem}\label{Thm4}
Let $\pi\in S_n$ be a sorted permutation, and preserve the notation from above. A composition $(q_0,\ldots,q_k)$ of $n-k$ into $k+1$ parts is a valid composition of $\pi$ if and only if the following two conditions hold:
\begin{enumerate}[(a)]
\item For every $m\in\{0,\ldots,k\}$, \[\sum_{j=m}^{e_m-1}q_j\geq\sum_{j=m}^{e_m-1}q_j^*.\]
\item For all $m,p\in\{0,1,\ldots,k\}$ with $m\leq p\leq e_m-2$, we have \[\sum_{j=m}^pq_j\geq d_{p+1}-d_m-\sum_{j=m+1}^{p+1}\alpha_j.\]
\end{enumerate}
\end{theorem}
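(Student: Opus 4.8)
The plan is to understand precisely which configurations of northeast endpoints give rise to valid hook configurations of $\pi$, and then to translate that combinatorial description into the inequalities (a) and (b). The key object to track is, for each hook $H_i$ in a (not necessarily canonical) valid hook configuration, the index $f_i$ of the ascending run containing its northeast endpoint. I would first establish a ``monotonicity/interval'' lemma: in any valid hook configuration, because of Conditions 2 and 3 in Definition~\ref{Def2}, the northeast endpoints interact with the ascending runs in a very constrained way, and the canonical choice made in the construction of $\mathcal H^*$ is the ``greediest'' one, so $f_i \geq e_i$ for every valid hook configuration and every $i$. This is the heart of why the canonical valid hook configuration exists exactly when $\pi$ is sorted, and it is essentially the content of the construction; I would make it precise by induction on $i$ from $i=k$ downward, exactly mirroring the greedy construction of $\mathcal H^*$.

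Next I would set up the dictionary between a valid hook configuration and its induced composition. The color classes of the induced coloring partition the $n-k$ non-northeast-endpoint points, and the crucial observation is that the color-$t$ class (for $t\geq 1$) consists of the points lying in the ``region under hook $H_t$'' minus the points captured by hooks nested inside $H_t$, while the blue class is what is left over under the sky. Summing partial blocks of the composition $q_m + q_{m+1} + \cdots + q_p$ then has a clean geometric meaning: up to corrections coming from northeast endpoints (which are uncolored), it counts the points lying in the union of ascending runs $m+1$ through $p+1$ that are ``seen'' by the sky or by hooks $H_0,\ldots,H_p$ in an appropriate sense. The two bounds in the theorem should then fall out: condition (b) says that the points physically sitting in ascending runs $m+1,\ldots,p+1$, minus the $\alpha_j$'s that account for northeast endpoints landing there, must all be accounted for by the colors $q_m,\ldots,q_p$ — this is a counting identity/inequality that holds precisely when the hooks $H_m,\ldots,H_p$ do not reach past run $p+1$, which by the monotonicity lemma is controlled by the $e_i$'s and is why the range $m\le p\le e_m-2$ appears. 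Condition (a) is the complementary ``lower tail'' statement comparing an arbitrary valid configuration to the canonical one over the block of runs $m,\ldots,e_m-1$: since the canonical hooks reach as far left as possible, any other valid configuration must place at least as many points into colors $q_m,\ldots,q_{e_m-1}$.

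For the converse direction, given a composition satisfying (a) and (b), I would explicitly reconstruct a valid hook configuration realizing it: process hooks from $H_k$ down to $H_1$, and at each stage use inequality (b) to guarantee there is enough ``room'' (enough uncolored points in the right runs) to place the northeast endpoint of $H_\ell$ so that Conditions 2 and 3 hold, and use inequality (a) together with Theorem~\ref{Thm6} (uniqueness of the hook configuration inducing a given composition) to pin down that the resulting composition is exactly $(q_0,\ldots,q_k)$. The main obstacle, and where I expect the bulk of the work to lie, is the careful bookkeeping in this geometric dictionary: correctly handling the southwest-endpoint points (which ``look around'' the vertical segment of their own hook, per Definition~\ref{Def2}), correctly attributing the $\alpha_j$ correction terms, and verifying that the greedy reconstruction never gets stuck exactly when (a) and (b) hold. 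I would keep Example~\ref{Exam} as a running sanity check throughout: with $(q_0^*,\ldots,q_3^*)=(7,2,2,2)$, $(e_0,\ldots,e_3)=(4,2,4,4)$, and $(\alpha_1,\ldots,\alpha_4)=(0,1,0,2)$, each instance of (a) and (b) can be checked by hand, which should catch any off-by-one errors in the indexing before they propagate into the general argument.
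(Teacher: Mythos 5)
Your high-level strategy is the same as the paper's: compare an arbitrary valid hook configuration to the canonical one (your ``monotonicity lemma'' $b_m^*\leq b_m$, $\pi_{b_m^*}\leq\pi_{b_m}$ is exactly the paper's first step), and for the converse rebuild the configuration greedily from $H_k$ down to $H_1$. But as written there are two genuine gaps. In the forward direction, the inequalities (a) and (b) compare the colors of $\mathcal H$ against data attached to $\mathcal H^*$ (the $q_j^*$ and the $\alpha_j$ count \emph{canonical} northeast endpoints), and your ``geometric dictionary'' never supplies the bridge between the two configurations. The paper does this by fixing the window $X=\{d_m+1,\ldots,\min\{b_m^*,d_{p+1}\}\}$, showing the map $b_j\mapsto b_j^*$ is an injection $X\cap\mathcal B\to X\cap\mathcal B^*$ (which needs the observation that any $b_\gamma\in X$ with $\gamma\neq m$ lies below $H_m$, forcing $\gamma>m$ and hence $b_\gamma^*>d_m$), so that $|X\setminus\mathcal B|\geq|X\setminus\mathcal B^*|$, and then showing every point of $X\setminus\mathcal B$ is colored by one of $H_m,\ldots,H_p$. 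Without some version of this counting, the appearance of $\alpha_j$ and $q_j^*$ in your inequalities is unjustified; saying the bounds ``should fall out'' of the dictionary is where the proof actually is.

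In the converse, the composition induced by the greedy choice $b_\ell=z_\ell(q_\ell+1)$ is automatically $(q_0,\ldots,q_k)$ (the points seeing $H_\ell$ are forced), so condition (a) is \emph{not} needed to ``pin down'' the composition, and invoking Theorem \ref{Thm6} there is beside the point. What does need proof, and what your sketch omits entirely, is that the chosen point is a legitimate northeast endpoint at all, i.e.\ that $\pi_{b_\ell}>\pi_{d_\ell}$. This is the bulk of the paper's converse: a downward induction showing $b_\ell^*\leq b_\ell$ and $\pi_{b_\ell^*}\leq\pi_{b_\ell}$, where the case $b_\ell\leq d_{e_\ell-1}$ is ruled out by a point-count under $H_\ell$ contradicting condition (b) with $m=\ell$, and condition (a) is then used to compare the number of points below $H_\ell$ with the number below $H_\ell^*$, forcing $H_\ell$ to lie above $H_\ell^*$. (Condition (b) with $m=0$, $p=\ell-1$, together with $\sum_{j=1}^{\ell}\alpha_j\leq\ell-1$, is also needed just to show $\theta_\ell\geq q_\ell+1$, i.e.\ that $z_\ell(q_\ell+1)$ exists.) Until you supply these verifications, the reconstruction could get stuck or produce a non-hook even when (a) and (b) hold, so the converse is not established.
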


\begin{proof}
To ease notation, let $P(i)=(i,\pi_i)$. Suppose $(q_0,\ldots,q_k)\in\mathcal V(\pi)$, and let $\mathcal H=(H_1,\ldots,H_k)$ be the valid hook configuration inducing $(q_0,\ldots,q_k)$. Let $P(b_i)$ be the northeast endpoint of $H_i$. Put $\pi_0=n+1$, $\pi_{n+1}=n+2$, and $b_0=b_0^*=n+1$. It will be convenient to view the ``sky" as a hook $H_0=H_0^*$ with southwest endpoint $(d_0,\pi_{d_0})=(0,n+1)$ and northeast endpoint $(b_0,\pi_{b_0})=(n+1,n+2)$ (this conflicts with our definition of a hook since these points are not in the plot of $\pi$, but we will ignore this technicality).\footnote{The term ``skyhook" refers to, among other things, a recovery system employed by the United States CIA and Air Force in the $20^{\text{th}}$ century. The system allows an airplane to attach to a cable while in flight. The cable is, in turn, attached to a person, who is then dragged behind the plane. One might find it helpful (or not) to picture an airplane at the point $(n+1,n+2)$, a person at the point $(0,n+1)$, and a blue cable connecting the plane and the person.} Put $\mathcal B^*=\{b_0^*,\ldots,b_k^*\}$ and $\mathcal B=\{b_0,\ldots,b_k\}$. 
If we build $\mathcal H$ by choosing the northeast endpoints $P(b_k),\ldots,P(b_1)$ in this order, then every possible choice for $P(b_m)$ was also a choice for $P(b_m^*)$ when we built $\mathcal H^*$. It follows from our choice of $P(b_m^*)$ that $b_m^*\leq b_m$ and $\pi_{b_m^*}\leq \pi_{b_m}$. This implies that $H_m$ lies above $H_m^*$ or is equal to $H_m^*$ for every $m\in\{0,\ldots,k\}$.

Suppose $m,p\in\{0,\ldots,k\}$ and $m\leq p\leq e_m-1$ (recall that $\pi_{b_m^*}$ is in the $e_m^\text{th}$ ascending run of $\pi$). Let $X=\{d_m+1,d_m+2,\ldots,\min\{b_m^*,d_{p+1}\}\}$. Suppose $b_\gamma\in X\cap\mathcal B$, where $\gamma\neq m$. Because $b_m^*\leq b_m$, we must have $d_m<b_\gamma<b_m$. This means that $b_\gamma$ lies below the hook $H_m$, so $H_\gamma$ lies below $H_m$. Deducing that $m+1\leq\gamma$, we find that $d_m<b_\gamma^*$. We also know that $b_\gamma^*\leq b_\gamma$, so $d_m<b_\gamma^*\leq \min\{b_m^*,d_{p+1}\}$ (because $b_\gamma\in X$). This proves the implication $b_\gamma\in X\cap\mathcal B\Longrightarrow b_\gamma^*\in X\cap\mathcal B^*$. The map $X\cap\mathcal B\to X\cap\mathcal B^*$ given by $b_j\mapsto b_j^*$ is an injection, so 
\begin{equation}\label{Eq10}
\vert X\cap\mathcal B\vert\leq\vert X\cap\mathcal B^*\vert.
\end{equation}

Choose $x\in X\setminus\mathcal B$. Recall that $\mathcal H$ induces a coloring of the plot of $\pi$. The point $P(x)$ lies below the hook $H_m$. None of the hooks $H_0,H_1,\ldots,H_{m-1}$ lie below $H_m$, and all of the hooks $H_{p+1},H_{p+2},\ldots,H_k$ appear to the right of $P(x)$. Therefore, if $P(x)$ looks directly upward, it sees one of the hooks $H_m,H_{m+1},\ldots,H_p$. Letting $\mathcal A_{m,p}$ be the set of points that are given the same color as one of the hooks $H_m,H_{m+1},\ldots,H_p$, we see that $x\in \mathcal A_{m,p}$. This shows that $(X\setminus\mathcal B)\subseteq\mathcal A_{m,p}$. Hence, 
\begin{equation}\label{Eq11}
\sum_{j=m}^pq_j=|\mathcal A_{m,p}|\geq\vert X\setminus\mathcal B\vert.
\end{equation}

Suppose $p\leq e_m-2$ (meaning $\min\{b_m^*,d_{p+1}\}=d_{p+1}$). We have $\vert X\cap\mathcal B^*\vert=\displaystyle{\sum_{j=m+1}^{p+1}\alpha_j}$ and $\vert X\vert=d_{p+1}-d_m$, so we can use \eqref{Eq10} to find that
\[\vert X\setminus\mathcal B\vert=|X|-|X\cap\mathcal B|\geq\vert X\vert-\vert X\cap\mathcal B^*\vert=d_{p+1}-d_m-\sum_{j=m+1}^{p+1}\alpha_j.\]
Combining this last inequality with \eqref{Eq11} yields $(b)$. 

Next, suppose $p=e_m-1$ (meaning $\min\{b_m^*,d_{p+1}\}=b_m^*$). In this case, the elements of $X\setminus\mathcal B^*$ are the indices $x$ such that $P(x)$ lies below $H_m^*$ and is not a northeast endpoint of a hook in $\mathcal H^*$. These points are precisely those that are given the same color as one of the hooks $H_m^*,\ldots,H_{e_m-1}^*$ in the coloring induced by $\mathcal H^*$. The number of such points is $\displaystyle \sum_{j=m}^{e_m-1}q_j^*$, so we deduce from \eqref{Eq10} and \eqref{Eq11} that \begin{equation}\label{Eq3}
\sum_{j=m}^{e_m-1}q_j\geq|X\setminus\mathcal B|\geq|X\setminus\mathcal B^*|=\sum_{j=m}^{e_m-1}q_j^*. 
\end{equation}
This proves $(a)$. 

To prove the converse, suppose we are given a composition $(q_0,\ldots, q_k)$ of $n-k$ into $k+1$ parts that satisfies $(a)$ and $(b)$. We wish to construct a valid hook configuration $\mathcal H=(H_1,\ldots,H_k)$ of $\pi$ that induces the composition $(q_0,\ldots, q_k)$. To do so, it suffices to specify the northeast endpoints of the hooks. Letting $P(b_i)$ denote the northeast endpoint of $H_i$ as before, we see that we need only choose the indices $b_i$. We will choose them in the order $b_k,\ldots,b_1$. 

Let $\ell\in\{1,\ldots,k\}$. Suppose that we have already chosen $b_k,\ldots,b_{\ell+1}$ and that we are now ready to choose $b_\ell$. Let $Z_\ell$ be the set of indices $z\in\{d_\ell+1,\ldots,n\}\setminus\{b_{\ell+1},\ldots,b_k\}$ such that $P(z)$ does not lie below any of the hooks $H_{\ell+1},\ldots,H_k$. Let us write $Z_\ell=\{z_\ell(1),\ldots,z_\ell(\theta_\ell)\}$, where $\theta_\ell=\vert Z_\ell\vert$ and $z_\ell(1)<\cdots<z_\ell(\theta_\ell)$. Put $b_\ell=z_\ell(q_\ell+1)$. 

This choice of $b_\ell$ is actually forced upon us. Indeed, we \emph{must} put $b_\ell=z_\ell(w)$ for some $w$. The points $P(z_\ell(1)),\ldots,P(z_\ell(w-1))$ are precisely the points that see the hook $H_\ell$ when they look directly upward. Therefore, if we can show that this construction actually produces a valid hook configuration $\mathcal H$, we will know that $(q_0,\ldots,q_k)$ is the valid composition of $\pi$ induced by $\mathcal H$. Furthermore, we will know that $\mathcal H$ is the unique valid hook configuration of $\pi$ inducing $(q_0,\ldots,q_k)$ (this is essentially the proof of Theorem \ref{Thm6} given in \cite{Defant2}).  

We first need to verify that $\theta_\ell\geq q_\ell+1$ so that $z_\ell(q_\ell+1)$ actually exists. There are $n-d_\ell-(k-\ell)$ indices $z\in\{d_\ell+1,\ldots,n\}\setminus\{b_{\ell+1},\ldots,b_k\}$, and $\displaystyle{\sum_{j=\ell+1}^{k+1}q_j}$ of them are such that $P(z)$ lies below one of the hooks $H_{\ell+1},\ldots,H_k$. Consequently, 
\begin{equation}\label{Eq7}
\theta_\ell=\vert Z_\ell\vert=n-d_\ell-(k-\ell)-\sum_{j=\ell+1}^kq_j.
\end{equation} 
We know that $b_i^*>d_i$ for all $i$. Therefore, among the numbers $b_1^*,\ldots,b_k^*$, only $b_1^*,\ldots,b_{\ell-1}^*$ could possibly lie in the first $\ell$ ascending runs of $\pi$. This shows that $\displaystyle \sum_{j=1}^\ell\alpha_j\leq\ell-1$. 
Combining this with \eqref{Eq7} and the fact that $\displaystyle{\sum_{j=0}^\ell q_j=n-k-\sum_{j=\ell+1}^kq_j}$ gives 
\[\theta_\ell=\ell-d_\ell+\sum_{j=0}^\ell q_j=\ell-1-d_\ell+\sum_{j=0}^{\ell-1} q_j+q_\ell+1\geq\sum_{j=0}^{\ell-1}q_j-\left(d_\ell-\sum_{j=1}^\ell\alpha_j\right)+q_\ell+1.\]
Setting $m=0$ and $p=\ell-1$ in condition $(b)$ yields \[\sum_{j=0}^{\ell-1}q_j\geq d_\ell-\sum_{j=1}^\ell\alpha_j,\] so $\theta_\ell\geq q_\ell+1$. 

Now that we have defined the indices $b_\ell$, we can construct the hooks $H_\ell$. Specifically, $H_\ell$ is the hook with southwest endpoint $P(d_\ell)$ and northeast endpoint $P(b_\ell)$. To check that this is in fact a hook, we must verify that $d_\ell<b_\ell$ and $\pi_{d_\ell}<\pi_{b_\ell}$ for all $\ell$. We constructed $b_\ell$ so that $d_\ell<b_\ell$. We also know that $\pi_{d_\ell}<\pi_{b_\ell^*}$ because $H_\ell^*$ is a hook with southwest endpoint $P(d_\ell)$ and northeast endpoint $P(b_\ell^*)$. Hence, it suffices to show that $\pi_{b_\ell^*}\leq\pi_{b_\ell}$. 

Observe that $b_k^*=d_k+q_k^*+1$ because the points lying below $H_k^*$ are precisely $P(d_k+1),\ldots,$ $P(d_k+q_k)$. Likewise, $b_k=d_k+q_k+1$. Setting $m=k$ in condition $(a)$ yields $q_k\geq q_k^*$ because $e_k=k+1$. This shows that $b_k^*\leq b_k$. This also forces the inequality $\pi_{b_k^*}\leq \pi_{b_k}$ since $\pi_{b_k^*}$ and $\pi_{b_k}$ both lie in the $(k+1)^\text{th}$ ascending run of $\pi$. It follows that $H_k$ lies above $H_k^*$ or is equal to $H_k^*$. Now, choose some $\ell\in\{1,\ldots,k-1\}$, and suppose that $b_m^*\leq b_m$ and $\pi_{b_m^*}\leq\pi_{b_m}$ for all $m\in\{\ell+1,\ldots,k\}$. For each such $m$, this means that $H_m$ lies above or is equal to $H_m^*$. Recall the definition of $Z_\ell$ from above. It is straightforward to check that the entries $\pi_{z_\ell(1)},\ldots,\pi_{z_\ell(\theta_\ell)}$ are left-to-right maxima of the string $\pi_{d_\ell+1}\cdots\pi_n$ (a left-to-right maximum of a string of positive  integers $w_1\cdots w_m$ is an entry $w_j$ such that $w_j>w_i$ for all $i\in\{1,\ldots,j-1\}$). 

We know from our definition of $e_\ell$ that $b_\ell^*\geq d_{e_\ell-1}+1$. We wish to show that $b_\ell\geq b_\ell^*$, which will of course imply that $b_\ell\geq d_{e_\ell-1}+1$. By way of contradiction, let us first assume $b_\ell\leq d_{e_\ell-1}$. We have $d_p+1\leq b_\ell\leq d_{p+1}$ for some $p\in\{\ell,\ell+1,\ldots,e_\ell-2\}$. Construct the hook $H_\ell$ with southwest endpoint $P(d_\ell)$ and northeast endpoint $P(b_\ell)$ (we do not yet know that $\pi_{d_\ell}<\pi_{b_\ell}$, but we can still connect $P(d_\ell)$ and $P(b_\ell)$ with line segments and call the resulting shape a ``hook"). Because $\pi_{b_\ell}=\pi_{z_\ell(q_\ell+1)}$ is a left-to-right maximum of $\pi_{d_\ell+1}\cdots\pi_n$, every point $P(x)$ with $d_\ell+1\leq x\leq d_p$ lies below $H_\ell$. Furthermore, each of the hooks $H_{\ell+1},\ldots,H_p$ must lie (entirely) below $H_\ell$ because $P(b_\ell)$ cannot lie above any of these hooks (one can verify that our construction guarantees that no point in the plot of $\pi$ lies above any of these hooks). It follows from our construction that there are precisely $\displaystyle{\sum_{j=\ell}^pq_j}$ points that lie below the hook $H_\ell$ and are not in $\{P(b_1),\ldots,P(b_k)\}$. Each of the points $P(b_{\ell+1}),\ldots,P(b_p)$ lies below $H_\ell$ because the hooks $H_{\ell+1},\ldots,H_p$ lie below $H_\ell$. This means that the total number of points lying below $H_\ell$ is at least $p-\ell+\displaystyle{\sum_{j=\ell}^pq_j}$. For each such point $P(z)$, we have $d_\ell+1\leq z\leq d_{p+1}$, so 
\begin{equation}\label{Eq9}
p-\ell+\sum_{j=\ell}^pq_j<d_{p+1}-d_\ell
\end{equation}
Note that the inequality here is strict because $b_\ell$ is an element of $\{d_\ell+1,\ldots,d_{p+1}\}$ and $P(b_\ell)$ does not lie below $H_\ell$. 

If $\delta$ is an index such that $d_\ell+1\leq b_\delta^*\leq d_{p+1}$, then $\delta\leq p$. The point $P(b_\delta^*)$ must lie below $H_\ell^*$ because $d_\ell<b_\delta^*\leq d_{p+1}<b_\ell^*$. Hence, $\ell+1\leq \delta\leq p$. This shows that there are at most $p-\ell$ possible choices for $\delta$, so \[\sum_{j=\ell+1}^{p+1}\alpha_j\leq p-\ell.\] It now follows from \eqref{Eq9} that \[\sum_{j=\ell+1}^{p+1}\alpha_j+\sum_{j=\ell}^pq_j<d_{p+1}-d_\ell,\] which we can see is a contradiction by setting $m=\ell$ in condition $(b)$. We conclude that $b_\ell\geq d_{e_\ell-1}+1$. 

Note that $H_\ell$ lies above the hooks $H_{\ell+1},\ldots,H_{e_\ell-1}$, so the number of points not in the set $\{P(b_1),\ldots,P(b_k)\}$ that lie below $H_\ell$ is at least $\displaystyle{\sum_{j=\ell}^{e_\ell-1}q_j}$. By condition $(a)$, this is at least $\displaystyle{\sum_{j=\ell}^{e_\ell-1}q_j^*}$. Moreover, each of the points $P(b_{\ell+1}),\ldots,P(b_{e_\ell-1})$ lies below $H_\ell$. This shows that there are at least \[e_\ell-1-\ell+\sum_{j=\ell}^{e_\ell-1}q_j^*\] points below $H_\ell$.

If $\eta$ is an index such that $P(b_\eta^*)$ lies below $H_\ell^*$, then $d_\ell<d_\eta<b_\eta^*<b_\ell^*\leq d_{e_\ell}$. This guarantees that $\ell<\eta\leq e_\ell-1$, so there are at most $e_\ell-1-\ell$ such indices $\eta$. The number of points below $H_\ell^*$ that are not of the form $P(b_\eta^*)$ is $\displaystyle{\sum_{j=\ell}^{e_\ell-1}q_j^*}$, so the total number of points below $H_\ell^*$ is at most \[e_\ell-1-\ell+\sum_{j=\ell}^{e_\ell-1}q_j^*.\] According to the previous paragraph, the number of points below $H_\ell$ is at least the number of points below $H_\ell^*$. Therefore, $H_\ell$ lies above $H_\ell^*$ or is equal to $H_\ell^*$. In other words, $b_\ell^*\leq b_\ell$ and $\pi_{b_\ell^*}\leq \pi_{b_\ell}$. It follows by induction that $b_i^*\leq b_i$ and $\pi_{b_i^*}\leq\pi_{b_i}$ for all $i\in\{1,\ldots,k\}$. 

We have shown that the hooks $H_1,\ldots,H_k$ are in fact bona fide hooks. It is straightforward to check that our construction guarantees that $\mathcal H=(H_1,\ldots,H_k)$ is a valid hook configuration of $\pi$, so the proof is complete. 
\end{proof}

\section{Background on Preimages of Permutation Classes}\label{Sec:Back}

In this section, we review some known results concerning preimages of permutation classes. We also establish some notation for subsequent sections.  

First, suppose $\pi\in S_n$ for some $n\geq 4$, and write $\pi=LnR$ so that $s(\pi)=s(L)s(R)n$. Either $L$ or $R$ has length at least $2$, and the permutations $s(L)$ and $s(R)$ each must end in their last entries. It follows that $s(\pi)$ contains the pattern $123$, so \[|s^{-1}(\Av_n(123))|=0\quad\text{whenever}\quad n\geq 4.\] It is easy to see that \[|s^{-1}(\Av_n(213))|=C_n.\] Indeed, suppose $\pi\in s^{-1}(\Av_n(213))$. Since $s(\pi)$ avoids $213$ and has last entry $n$, $s(\pi)$ must be the identity permutation of length $n$. In other words, $s^{-1}(\Av(213))=s^{-1}(\Av(21))=\Av(231)$ by Theorem \ref{Thm1}. Theorem \ref{Thm2} tells us that \[|s^{-1}(\Av_n(231))|=\frac{2}{(n+1)(2n+1)}{3n\choose n}.\] Theorem 3.2 in \cite{Bouvel} states that we also have \[|s^{-1}(\Av_n(132))|=\frac{2}{(n+1)(2n+1)}{3n\choose n}.\] Part of Theorem 3.4 in \cite{Bouvel} states that \[|s^{-1}(\Av_n(312))|=\frac{2}{n(n+1)^2}\sum_{k=1}^n{n+1\choose k-1}{n+1\choose k}{n+1\choose k+1}.\] This last expression is also the number of so-called \emph{Baxter permutations} of length $n$, and it produces the sequence A001181 in the Online Encyclopedia of Integer Sequences \cite{OEIS}. The only length-$3$ pattern $\tau$ for which $|s^{-1}(\Av_n(\tau))|$ is not known is $321$. The sequence $(|s^{-1}(\Av_n(321))|)_{n\geq 1}$ appears to be new. In Section \ref{Sec:321}, we use valid hook configurations to derive estimates for the exponential growth rate of this sequence.   
 
The above remarks show that the sets $s^{-1}(\Av(\tau^{(1)},\ldots,\tau^{(r)}))$ are relatively uninteresting when one of the patterns $\tau^{(i)}$ is an element of $\{123,213\}$. Hence, we will focus our attention on permutation classes whose bases are subsets of $\{132,231,312,321\}$. 

As mentioned in the introduction, it is always possible to describe $s^{-1}(\Av(\tau^{(1)},\ldots,\tau^{(r)}))$ as the set of permutations avoiding some finite collection of \emph{mesh patterns}. We find it simpler to describe our stack-sorting preimage sets in terms of barred patterns or vincular patterns. A \emph{barred pattern} is a permutation pattern in which some entries are overlined. Saying a permutation contains a barred pattern means that it contains a copy of the pattern formed by the unbarred entries that is not part of a pattern that has the same relative order as the full barred pattern. For example, saying a permutation contains the barred pattern $3\overline{5}241$ means that it contains a $3241$ pattern that is not part of a $35241$ pattern. In fact, West \cite{West} introduced barred patterns in order to describe $2$-stack-sortable permutations, showing that a permutation is $2$-stack-sortable if and only if it avoids the classical pattern $2341$ and the barred pattern $3\overline{5}241$. 

A \emph{vincular pattern} is a permutation pattern in which some consecutive entries can be underlined. We say a permutation \emph{contains} a vincular pattern if it contains an occurrence of the permutation pattern in which underlined entries are consecutive. For example, saying that a permutation $\sigma=\sigma_1\cdots\sigma_n$ contains the vincular pattern $\underline{32}41$ means that there are indices $i_1<i_2<i_3<i_4$ such that $\sigma_{i_4}<\sigma_{i_2}<\sigma_{i_1}<\sigma_{i_3}$ and $i_2=i_1+1$. Vincular patterns appeared first in \cite{Babson} and have received a large amount of attention ever since \cite{Steingrimsson}. 

If $\tau$ is a classical, barred, or vincular pattern, then we say a permutation \emph{avoids} $\tau$ if it does not contain $\tau$. Let $\Av(\tau^{(1)},\ldots,\tau^{(r)})$ be the set of permutations avoiding $\tau^{(1)},\ldots,\tau^{(r)}$, and let $\Av_n(\tau^{(1)},\ldots,\tau^{(r)})=\Av(\tau^{(1)},\ldots,\tau^{(r)})\cap S_n$.


Recall the notation from Theorem \ref{Thm5} and Theorem \ref{Thm7}. When counting preimages of permutations according to numbers of descents and peaks, we will make use of the generating functions \[F(x,y)=\sum_{n\geq 1}\sum_{m\geq 1}N(n,m)x^ny^{m-1}\quad\text{and}\quad G(x,y)=\sum_{n\geq 1}\sum_{m\geq 1}V(n,m)x^ny^{m-1}.\] It is known that 
\begin{equation}\label{Eq2}
F(x,y)=\frac{1-x(y+1)-\sqrt{1-2x(y+1)+x^2(y-1)^2}}{2xy}
\end{equation}
and 
\begin{equation}\label{Eq4}
G(x,y)=\frac{1-2x-\sqrt{(1-2x)^2-4x^2y}}{2xy}.
\end{equation}
We let $[z_1^{n_1}\cdots z_r^{n_r}]A(z_1,\ldots,z_r)$ denote the coefficient of $z_1^{n_1}\cdots z_r^{n_r}$ in the generating function \linebreak $A(z_1,\ldots,z_r)$. 

\section{$s^{-1}(\Av(132,231,312,321))$}\label{Sec:4patterns}

It turns out that $s^{-1}(\Av(132,231,312,321))$ is an actual permutation class; we have \[s^{-1}(\Av(132,231,312,321))=\Av(1342,2341,3142,3241,3412,3421).\] One could probably enumerate this class directly, but we will use valid hook configurations in order to illustrate this uniform method for finding fertilities.  

\begin{theorem}\label{Thm3}
For $n\geq 2$, we have \[|s^{-1}(\Av_n(132,231,312,321))|=2C_n-2C_{n-1}.\] The number of elements of $s^{-1}(\Av_n(132,231,312,321))$ with $m$ descents is \[N(n,m+1)+\sum_{i=1}^{n-2}\sum_{j=1}^mN(n-i-1,j)N(i,m-j+1).\] The number of elements of $s^{-1}(\Av_n(132,231,312,321))$ with $m$ peaks is \[V(n,m+1)+\sum_{i=1}^{n-2}\sum_{j=1}^mV(n-i-1,j)V(i,m-j+1).\]
\end{theorem}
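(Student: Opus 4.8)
The plan is to determine the structure of permutations $\pi \in \Av_n(132,231,312,321)$ and their valid hook configurations explicitly, then apply Theorems~\ref{Thm5} and~\ref{Thm7}. First I would analyze what it means for $\pi$ to avoid all four patterns $132,231,312,321$. Avoiding $231,312,321$ means that whenever $\pi_i$ is followed by a smaller entry (i.e., $i$ is a descent), the pair $\pi_i > \pi_{i+1}$ cannot be extended to the left or right by a third entry in any of these three ways; combined with avoiding $132$, this should force $\pi$ to have at most one descent, and in fact to be of a very restricted shape. I expect the upshot is that the permutations in $\Av_n(132,231,312,321)$ are exactly the identity $12\cdots n$ together with permutations obtained by a single ``block swap'' or adjacent-type move — concretely, permutations with exactly one descent whose two ascending runs are both intervals in a suitable sense. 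Pinning down this classification cleanly is the first key step; I would phrase it as: $\pi$ either has no descents, or has exactly one descent at position $d$, and then the two ascending runs $\pi_1\cdots\pi_d$ and $\pi_{d+1}\cdots\pi_n$ must be increasing sequences that interleave in the unique way compatible with avoiding the four patterns.

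Once the structure is in hand, the second step is to compute $\mathcal V(\pi)$ for each such $\pi$. If $\pi$ is the identity, it has $k=0$ descents, no hooks, and the unique valid composition is $(n)$, contributing $C_n$ to the fertility. If $\pi$ has exactly one descent, it has $k=1$ descent, so a valid hook configuration consists of a single hook $H_1$ with southwest endpoint the descent top, and the induced composition is a pair $(q_0,q_1)$ with $q_0+q_1 = n-1$. I would use Theorem~\ref{Thm4} (or a direct argument about where the northeast endpoint of the single hook can land, governed by Conditions 2 and 3 of Definition~\ref{Def2}) to see which pairs $(q_0,q_1)$ are valid; because of the rigid interval structure, I expect that for each admissible one-descent permutation the set of valid compositions is either all of $\{(q_0,q_1) : q_0+q_1=n-1\}$ or a single contiguous range, and summing $C_{q_0}C_{q_1}$ over these, then over all the one-descent permutations (indexed by the descent position and the sizes of the interleaving blocks), should produce $\sum_{i=1}^{n-2}\sum$ (over valid compositions of the remaining parameter) which telescopes against known Catalan identities to give $2C_n - 2C_{n-1} - C_n = C_n - 2C_{n-1}$... so that the one-descent permutations contribute $C_n - 2C_{n-1}$ and the total is $2C_n - 2C_{n-1}$. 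Verifying that the combinatorial sum collapses to exactly $C_n - 2C_{n-1}$ via the Catalan convolution $\sum_{i} C_{n-1-i}C_{i} = C_n$ (with appropriate boundary corrections) is the arithmetic heart of the first formula.

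For the refinements by descents and by peaks, I would run the same analysis through Theorem~\ref{Thm7} instead of Theorem~\ref{Thm5}. The identity permutation contributes, via its single valid composition $(n)$, the term $\sum_{j_0 = m+1} N(n,j_0) = N(n,m+1)$ for descents and $V(n,m+1)$ for peaks, matching the leading terms in the stated formulas. Each one-descent permutation contributes $\sum_{(q_0,q_1)\in\mathcal V(\pi)} \sum_{j_0+j_1 = m+1} N(q_0,j_0)N(q_1,j_1)$, and after summing over all one-descent permutations the valid-composition constraints should again simplify so that every pair $(q_0,q_1)$ with $q_0 = n-i-1$, $q_1 = i$ (for $1\le i\le n-2$) occurs exactly once, yielding the double sum $\sum_{i=1}^{n-2}\sum_{j=1}^m N(n-i-1,j)N(i,m-j+1)$, and similarly with $V$ for peaks. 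I would need to check the index bookkeeping carefully — in particular that the reindexing $j_0 = j$, $j_1 = m-j+1$ with $j$ ranging over $1$ to $m$ is exactly what the constraint $j_0, j_1 \ge 1$, $j_0 + j_1 = m+1$ gives.

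\textbf{Main obstacle.} The delicate part is the passage from ``set of valid hook configurations'' to ``each pair $(n-i-1, i)$ arises exactly once.'' A priori a given one-descent permutation may have several valid compositions, and different one-descent permutations (different descent positions, different block sizes) may share valid compositions; the claim that after summing over all of them the multiset of valid compositions is precisely $\{(n-i-1,i) : 1 \le i \le n-2\}$, each with multiplicity one, is exactly the content that makes the clean formulas work. Establishing this — most likely by showing that the one-descent permutations in $\Av_n(132,231,312,321)$ are themselves in bijection with their canonical valid hook configurations in a way that makes the valid composition determined and distinct — is where the real work lies. Everything downstream (both the Catalan identity and the Narayana/$V$ refinements) then follows by substituting into Theorems~\ref{Thm5} and~\ref{Thm7} and doing routine algebra.
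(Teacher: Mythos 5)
There is a genuine gap, and it lies exactly where you yourself locate the ``real work'': the classification of $\Av_n(132,231,312,321)$. Your anticipated answer --- a family of one-descent permutations ``indexed by the descent position and the sizes of the interleaving blocks'' --- is wrong. For $n\geq 2$ the class contains exactly two permutations: $123\cdots n$ and $2134\cdots n$. Indeed, avoiding $231,312,321$ forces $\pi$ to be layered with all layers of size at most $2$ (as in Section \ref{Sec6}), and avoiding $132$ then forces every layer after the first to be a singleton, since an entry of an earlier layer together with a later layer of size $2$ forms a $132$ pattern; so $\pi$ is either the identity or $21\oplus 1\oplus\cdots\oplus 1=2134\cdots n$. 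Any other one-descent permutation contains one of the four patterns (e.g.\ $1324$ contains $132$, $2314$ contains $231$, $3124$ contains $312$). Because of this misclassification, the ``main obstacle'' you describe --- controlling multiplicities when many one-descent permutations each contribute a valid composition, via a bijection between such permutations and their canonical valid hook configurations --- is not the actual content of the theorem and would not lead anywhere: the index $i$ in the stated formulas does not range over a family of permutations at all. It ranges over the $n-2$ valid hook configurations of the \emph{single} permutation $2134\cdots n$: the unique hook has southwest endpoint $(1,2)$, its northeast endpoint can be any diagonal point $(n+1-i,n+1-i)$ with $1\leq i\leq n-2$, and that configuration induces the composition $(i,n-i-1)$. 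So one permutation has many valid compositions, rather than many permutations each having one.

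Once this is repaired, the rest of your plan is sound and coincides with the paper's argument: the identity contributes the single composition $(n)$, hence $C_n$ (and the terms $N(n,m+1)$, $V(n,m+1)$ via Theorem \ref{Thm7}), while $2134\cdots n$ contributes $\sum_{i=1}^{n-2}C_iC_{n-i-1}=C_n-2C_{n-1}$ by the Catalan convolution, giving $2C_n-2C_{n-1}$ in total; the descent and peak refinements follow by substituting the same valid compositions into Theorem \ref{Thm7}, with the index bookkeeping $j_0=j$, $j_1=m+1-j$, $1\leq j\leq m$, exactly as you indicate.
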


\begin{proof}
The only elements of $\Av_n(132,231,312,321)$ are $123\cdots n$ and $2134\cdots n$. The only valid composition of $123\cdots n$ is $(n)$. Each valid hook configuration of $2134\cdots n$ has exactly one hook. This hook has southwest endpoint $(1,2)$ and has northeast endpoint $(n+1-i,n+1-i)$ for some $i\in\{1,\ldots,n-2\}$. This valid hook configuration induces the valid composition $(i,n-i-1)$. It follows from Theorem \ref{Thm5} and the standard Catalan number recurrence relation that \[|s^{-1}(\Av_n(132,231,312,321))|=C_n+\sum_{i=1}^{n-2}C_iC_{n-i-1}=C_n+\sum_{i=0}^{n-1}C_iC_{n-i-1}-2C_{n-1}=2C_n-2C_{n-1}.\] The second and third statements of the theorem follow immediately from Theorem \ref{Thm7}. 
\end{proof}

\section{$s^{-1}(\Av(132,231,321))$ and $s^{-1}(\Av(132,312,321))$}\label{Sec5} 

It turns out that \[s^{-1}(\Av(132,231,321))=\Av(1342,2341,3241,45231,3\overline{5}142).\] The set $s^{-1}(\Av(132,312,321))$ is actually equal to the permutation class $\Av(1342,3142,3412,3421)$. 

Let \[\sigma_{n,\ell}=\ell 12\cdots(\ell-1)(\ell+1)\cdots n\quad\text{and}\quad\gamma_{n,\ell}=23\cdots(\ell-1)1(\ell+1)\cdots n.\] It is straightforward to check that $\Av_n(132,231,321)=\{\sigma_{n,1},\ldots,\sigma_{n,n}\}$ and $\Av_n(132,312,321)=\{\gamma_{n,1},\ldots,\gamma_{n,n}\}$. For example, $\Av_4(132,231,321)=\{1234,2134,3124,4123\}$. 
West \cite{West} found formulas for the fertilities of $\sigma_{n,\ell}$ and $\gamma_{n,\ell}$ and found that they are equal. It follows that \[|s^{-1}(\Av_n(132,231,321))|=|s^{-1}(\Av_n(132,312,321))|.\] 
This equality is easy to verify with the theory of valid hook configurations. Indeed, for $2\leq \ell\leq n$, we have \[\mathcal V(\sigma_{n,\ell})=\{(n-\ell-i+1,\ell+i-2):1\leq i\leq n-\ell\}\hspace{.29cm}\text{and}\hspace{.29cm}\mathcal V(\gamma_{n,\ell})=\{(\ell+i-2,n-\ell-i+1):1\leq i\leq n-\ell\}.\] 
That is, the valid compositions of $\sigma_{n,\ell}$ are obtained by interchanging the two parts in the valid compositions of $\gamma_{n,\ell}$. Along with Theorem \ref{Thm7}, this also implies the following. 
\begin{theorem}\label{Thm15}
The number of elements of $s^{-1}(\Av_n(132,231,321))$ with $m$ descents is equal to the number of elements of $s^{-1}(\Av_n(132,312,321))$ with $m$ descents. The number of elements of $s^{-1}(\Av_n(132,231,321))$ with $m$ peaks is equal to the number of elements of $s^{-1}(\Av_n(132,312,321))$ with $m$ peaks.
\end{theorem}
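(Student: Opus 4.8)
\textbf{Proof proposal for Theorem \ref{Thm15}.}

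The plan is to reduce everything to Theorem \ref{Thm7} together with the explicit descriptions of $\mathcal V(\sigma_{n,\ell})$ and $\mathcal V(\gamma_{n,\ell})$ stated just above the theorem. First I would establish (or simply invoke, since it is asserted in the text) the two facts $\Av_n(132,231,321)=\{\sigma_{n,1},\ldots,\sigma_{n,n}\}$ and $\Av_n(132,312,321)=\{\gamma_{n,1},\ldots,\gamma_{n,n}\}$, so that the two quantities we want to compare are $\sum_{\ell=1}^n (\text{number of }\rho\in s^{-1}(\sigma_{n,\ell})\text{ with }m\text{ descents})$ and the analogous sum over $\gamma_{n,\ell}$. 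Because the sums are indexed the same way, it suffices to prove the equality termwise: for each $\ell\in\{1,\ldots,n\}$, the number of elements of $s^{-1}(\sigma_{n,\ell})$ with $m$ descents equals the number of elements of $s^{-1}(\gamma_{n,\ell})$ with $m$ descents, and similarly for peaks.

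For the termwise statement, fix $\ell$. When $\ell=1$ both $\sigma_{n,1}$ and $\gamma_{n,1}$ are the identity $12\cdots n$, so there is nothing to prove; assume $2\le\ell\le n$. Each of $\sigma_{n,\ell}$ and $\gamma_{n,\ell}$ has exactly one descent, so $k=1$ in Theorem \ref{Thm7}, and the valid compositions have two parts. The key observation, already recorded in the excerpt, is the bijection $\mathcal V(\sigma_{n,\ell})\to\mathcal V(\gamma_{n,\ell})$ given by $(q_0,q_1)\mapsto(q_1,q_0)$: it sends $(n-\ell-i+1,\ell+i-2)$ to $(\ell+i-2,n-\ell-i+1)$. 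Now apply the descent formula of Theorem \ref{Thm7} with $k=1$: the number of elements of $s^{-1}(\sigma_{n,\ell})$ with $m$ descents is
\[
\sum_{(q_0,q_1)\in\mathcal V(\sigma_{n,\ell})}\ \sum_{j_0+j_1=m+1} N(q_0,j_0)N(q_1,j_1).
\]
Reindexing the outer sum via $(q_0,q_1)\mapsto(q_1,q_0)$ turns this into $\sum_{(q_0,q_1)\in\mathcal V(\gamma_{n,\ell})}\sum_{j_0+j_1=m+1}N(q_1,j_0)N(q_0,j_1)$, and then reindexing the inner sum by swapping $j_0\leftrightarrow j_1$ (legitimate since both range over positive integers summing to $m+1$) gives exactly the Theorem \ref{Thm7} expression for $s^{-1}(\gamma_{n,\ell})$. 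The peak statement is identical, with $N$ replaced by $V$ throughout. Summing over $\ell$ finishes the proof.

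I do not anticipate a genuine obstacle here: the whole argument is a symmetry/reindexing argument, and the one substantive input — that swapping the two parts of a composition carries $\mathcal V(\sigma_{n,\ell})$ onto $\mathcal V(\gamma_{n,\ell})$ — has already been supplied in the surrounding text. The only point requiring a sentence of care is the bookkeeping: checking that the index swap $j_0\leftrightarrow j_1$ is a bijection of the set $\{(j_0,j_1)\in\mathbb{Z}_{>0}^2: j_0+j_1=m+1\}$ onto itself, and that the reindexed product $N(q_1,j_0)N(q_0,j_1)$ really does match the product appearing in the formula for $\gamma_{n,\ell}$. If one wanted to be completely self-contained, one would also need to verify the stated forms of $\mathcal V(\sigma_{n,\ell})$ and $\mathcal V(\gamma_{n,\ell})$ — e.g. by describing the single hook directly (its southwest endpoint is the unique descent top, and its northeast endpoint can be any point above and to the right, each choice being automatically valid since there are no other hooks and no point can lie above this lone hook because the entries to its right are increasing), then reading off the induced two-part composition — but as stated in the excerpt this may simply be cited.
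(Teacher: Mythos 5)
Your proposal is correct and takes essentially the same route as the paper: there, Theorem \ref{Thm15} is deduced exactly by combining the stated descriptions of $\mathcal V(\sigma_{n,\ell})$ and $\mathcal V(\gamma_{n,\ell})$ (which differ by interchanging the two parts) with the formulas of Theorem \ref{Thm7}. Your explicit termwise comparison and the $j_0\leftrightarrow j_1$ reindexing merely spell out the symmetry the paper leaves implicit.
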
 
Note that West did not prove these refined equalities. 

\begin{theorem}\label{Thm16}
We have \[|s^{-1}(\Av_n(132,231,321))|=|s^{-1}(\Av_n(132,312,321))|={2n-2\choose n-1}.\] The number of elements of $s^{-1}(\Av_n(132,231,321))$ (equivalently, $s^{-1}(\Av_n(132,312,321))$) with $m$ descents is \[{n-1\choose m}^2.\] The number of elements of $s^{-1}(\Av_n(132,231,321))$ (equivalently, $s^{-1}(\Av_n(132,312,321))$) with $m$ peaks is \[2^{n-2m-2}{n\choose 2m+2}{2m+2\choose m+1}.\]
\end{theorem}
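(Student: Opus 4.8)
The plan is to combine the explicit description of the valid compositions given just before the theorem statement with Theorems \ref{Thm5} and \ref{Thm7}, then evaluate the resulting sums. Since $s^{-1}(\Av_n(132,231,321))$ is the disjoint union over $\ell\in\{1,\ldots,n\}$ of the sets $s^{-1}(\sigma_{n,\ell})$, and the cases $\ell=1$ and $\ell=n$ contribute only $\sigma_{n,1}=123\cdots n$ and $\sigma_{n,n}=n12\cdots(n-1)$ (with fertilities $C_n$ and... well, one handles these small cases separately), the bulk of the work is summing the contributions from $2\le\ell\le n-1$. By Theorem \ref{Thm5} and the stated formula $\mathcal V(\sigma_{n,\ell})=\{(n-\ell-i+1,\ell+i-2):1\le i\le n-\ell\}$, we get
\[
|s^{-1}(\Av_n(132,231,321))| = \sum_{\ell=1}^{n}\,\sum_{(q_0,q_1)\in\mathcal V(\sigma_{n,\ell})} C_{q_0}C_{q_1},
\]
and after reindexing this double sum collapses, by repeated use of the Catalan convolution identity $\sum_{a+b=N}C_aC_b=C_{N+1}$ (together with boundary corrections for the forbidden parts, exactly as in the proof of Theorem \ref{Thm3}), to a telescoping or closed form that one then identifies with $\binom{2n-2}{n-1}$. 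I would first write out the sum as a sum over pairs $(a,b)$ of nonnegative integers with $a+b=n-1$ of $C_aC_b$ weighted by the number of times that pair appears across all the $\mathcal V(\sigma_{n,\ell})$, check that this multiplicity is constant (equal to $1$) on the relevant range, and conclude $\sum_{a+b=n-1}C_aC_b = C_n$ plus lower-order terms; a short manipulation then yields $\binom{2n-2}{n-1}$. (Indeed $\binom{2n-2}{n-1}=\frac{n}{?}\cdots$; the cleanest route is to show directly that the total equals $\sum_{j=0}^{n-1}\binom{n-1}{j}^2=\binom{2n-2}{n-1}$ by Vandermonde, which is precisely the refined descent statement — so the unrefined count follows from the refined one by summing over $m$.)

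For the descent refinement, apply Theorem \ref{Thm7}: the number of elements of $s^{-1}(\sigma_{n,\ell})$ with $m$ descents is
\[
\sum_{(q_0,q_1)\in\mathcal V(\sigma_{n,\ell})}\ \sum_{j_0+j_1=m+1} N(q_0,j_0)N(q_1,j_1).
\]
Summing over all $\ell$ and using the explicit form of $\mathcal V(\sigma_{n,\ell})$, one needs the identity that $\sum_{q_0+q_1=n-1}\sum_{j_0+j_1=m+1}N(q_0,j_0)N(q_1,j_1)$ (with appropriate treatment of the degenerate one-part compositions coming from $\ell=1,n$) equals $\binom{n-1}{m}^2$. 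The natural tool here is the Narayana generating function $F(x,y)$ from \eqref{Eq2}: the sum in question is a coefficient extraction from $\big(1+F(x,y)\big)^2$ or a closely related expression, and the known fact that the bivariate Catalan/Narayana generating function satisfies $1+F = \frac{1}{1-x(1+F)\cdots}$ (the standard functional equation) lets one identify the relevant coefficient. Alternatively — and this is probably the slickest — one recognizes $\binom{n-1}{m}^2$ directly: the number of $231$-avoiding permutations of $[a]$ with exactly $j$ descents is the Narayana number $N(a,j+1)$, so the quantity being computed is the number of pairs of $231$-avoiding permutations whose lengths sum to $n-1$ and whose descent numbers sum to $m$, which is the coefficient of $x^{n-1}y^m$ in the square of the Narayana generating function; and that coefficient is classically $\binom{n-1}{m}^2$.

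For the peak refinement the argument is structurally identical, replacing $N$ by $V$ and $F$ by $G$ from \eqref{Eq4}. One computes $[x^{n-1}y^m]\,(1+G(x,y))^2$ (again with a small adjustment for the degenerate compositions), and using the closed form $G(x,y)=\dfrac{1-2x-\sqrt{(1-2x)^2-4x^2y}}{2xy}$ one extracts the coefficient explicitly. The square of $1+G$ simplifies nicely because $G$ satisfies the quadratic $xyG^2-(1-2x)G+1=0$, so $(1+G)^2$ can be written as a rational function of $x,y$ plus a single square-root term, from which the coefficient $2^{n-2m-2}\binom{n}{2m+2}\binom{2m+2}{m+1}$ drops out after recognizing the generalized binomial expansion of $\sqrt{(1-2x)^2-4x^2y}$; matching this against the known formula $V(i,j)=2^{i-2j+1}\binom{i-1}{2j-2}C_{j-1}$ and the identity $\sum_{i+i'=n-1}\sum_{j+j'=m+1}V(i,j)V(i',j')=2^{n-2m-2}\binom{n}{2m+2}\binom{2m+2}{m+1}$ closes the argument.

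\textbf{Main obstacle.} The conceptual content is entirely contained in the description of $\mathcal V(\sigma_{n,\ell})$, which is given to us; the remaining difficulty is purely computational. The trickiest bookkeeping is the treatment of the boundary cases $\ell=1$ and $\ell=n$ (where $\sigma_{n,\ell}$ has only one valid composition, namely a single part, so the "convolution" degenerates) and ensuring that when one passes from $\sum_{a+b=n-1}C_aC_b$-type sums over pairs to the true count one has added and subtracted the right boundary terms — this is exactly the $\pm 2C_{n-1}$ bookkeeping visible in the proof of Theorem \ref{Thm3}. The only other potential snag is the peak computation: extracting $[x^{n-1}y^m]$ from $(1+G)^2$ requires handling the square root in \eqref{Eq4} carefully, and one should double-check the exponent arithmetic ($2^{n-2m-2}$ versus $2^{n-2m}$ etc.) against small cases $n=2,3,4$ to make sure no factor of $2$ or shift in the binomial indices has been lost.
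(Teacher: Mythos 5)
Your overall strategy (sum the explicit valid compositions of the $\sigma_{n,\ell}$ via Theorems \ref{Thm5} and \ref{Thm7}, then evaluate with the generating functions $F$ and $G$) is the right one and matches the paper, but there is a genuine gap at the central bookkeeping step: you assert that each composition $(q_0,q_1)$ of $n-1$ appears with multiplicity $1$ across the sets $\mathcal V(\sigma_{n,2}),\ldots,\mathcal V(\sigma_{n,n-1})$, and accordingly you reduce everything to plain convolutions, i.e.\ to coefficients of $(1+F(x,y))^2$ and $(1+G(x,y))^2$. This is false. From $\mathcal V(\sigma_{n,\ell})=\{(n-\ell-i+1,\ell+i-2):1\leq i\leq n-\ell\}$ one sees that a fixed composition $(q_0,q_1)$ with $q_0+q_1=n-1$ lies in $\mathcal V(\sigma_{n,\ell})$ exactly when $2\leq\ell\leq q_1+1$, so it occurs with multiplicity $q_1$, not $1$. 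A small check shows the difference is real: for $n=4$ the correct count is $C_4+1\cdot C_1C_2+2\cdot C_2C_1=20=\binom{6}{3}$, whereas the multiplicity-one version gives $C_4+C_1C_2+C_2C_1=18$; likewise for $n=3$, $m=1$ the true refined count is $N(3,2)+1\cdot N(1,1)N(1,1)=4=\binom{2}{1}^2$, while $[x^2y]\bigl(1+F(x,y)\bigr)^2=2$. So the identities you lean on (that the pair count of $231$-avoiders with prescribed total length and total descents equals $\binom{n-1}{m}^2$, and the analogous $(1+G)^2$ claim for peaks) are simply not true, and the "telescoping" to $\binom{2n-2}{n-1}$ via $\sum_{a+b=N}C_aC_b=C_{N+1}$ does not go through.

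The paper's proof keeps track of this multiplicity: after the substitution $r=\ell+i$ the triple sum collapses to
\[
N(n,m+1)+\sum_{r=1}^{n-2}r\sum_{j=1}^{m}N(n-r-1,j)\,N(r,m-j+1)
=[x^ny^m]\left(F(x,y)+x^2y\,F(x,y)\cdot\frac{\partial}{\partial x}F(x,y)\right),
\]
where the factor $r$ (produced by the derivative $\frac{\partial}{\partial x}F$) is exactly the multiplicity you dropped; this expression simplifies via \eqref{Eq2} to $[x^ny^m]\bigl(x/\sqrt{1+x^2(y-1)^2-2x(y+1)}\bigr)=\binom{n-1}{m}^2$, and the unrefined count then follows by Vandermonde, as you anticipated. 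The peak statement is handled identically with $G(x,y)+x^2y\,G(x,y)\cdot\frac{\partial}{\partial x}G(x,y)$ in place of your $(1+G)^2$. (Your handling of the boundary cases is otherwise fine: $\sigma_{n,1}$ contributes the single composition $(n)$, and $\sigma_{n,n}$ has no valid compositions at all, so it contributes nothing rather than needing a separate degenerate convolution term.) To repair your write-up you must reinstate the multiplicity $q_1$ before doing any generating-function manipulation; once you do, you are reproducing the paper's argument.
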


\begin{proof}
By Theorem \ref{Thm15}, we need only consider the preimage sets $s^{-1}(\Av_n(132,231,321))$. We will prove the second and third statements; the first statement will then follow from the second and the well-known identity $\sum_{m=0}^{n-1}{n-1\choose m}^2={2n-2\choose n-1}$. The only valid composition of $\sigma_{n,1}=123\cdots n$ is $(n)$. For $2\leq \ell\leq n$, the valid compositions of $\sigma_{n,\ell}$ are $(n-\ell-i+1,\ell+i-2)$ for $1\leq i\leq n-\ell$. In particular, we can ignore $\sigma_{n,n}$ because it has no valid compositions (that is, $\sigma_{n,n}$ is not sorted).  

Using the first part of Theorem \ref{Thm7}, we find that the number of elements of 
\linebreak $s^{-1}(\Av_n(132,231,321))$ with $m$ descents is \[N(n,m+1)+\sum_{\ell=2}^{n-1}\sum_{(q_0,q_1)\in\mathcal V(\sigma_\ell)}\sum_{j_0+j_1=m+1}N(q_0,j_0)N(q_1,j_1)\] \[=N(n,m+1)+\sum_{\ell=2}^{n-1}\sum_{i=1}^{n-\ell}\sum_{j=1}^m N(n-\ell-i+1,j)N(\ell+i-2,m-j+1).\] Letting $r=\ell+i$, this becomes 
\[N(n,m+1)+\sum_{\ell=2}^{n-1}\sum_{r=\ell+1}^n\sum_{j=1}^mN(n-r+1,j)N(r-2,m-j+1)\] \[=N(n,m+1)+\sum_{\ell=1}^{n-2}\sum_{r=\ell}^{n-2}\sum_{j=1}^m N(n-r-1,j)N(r,m-j+1)\] \[=N(n,m+1)+\sum_{r=1}^{n-2}r\sum_{j=1}^m N(n-r-1,j)N(r,m-j+1)\] \[=[x^ny^m]\left(F(x,y)+x^2y\,F(x,y)\cdot\frac{\partial}{\partial x}F(x,y)\right).\] Applying \eqref{Eq2} and some algebraic manipulations, we find that this is equal to \[[x^ny^m]\left(\frac{x}{\sqrt{1+x^2(y-1)^2-2x(y+1)}}\right).\] Standard methods now allow us to see that this expression is equal to ${n-1\choose m}^2$.  

The proof of the third statement in the theorem proceeds exactly as in the proof of the second statement. In this case, we find that the number of elements of $s^{-1}(\Av_n(132,231,321))$ with $m$ peaks is \[[x^ny^m]\left(G(x,y)+x^2y\,G(x,y)\cdot\frac{\partial}{\partial x}G(x,y)\right).\]
Applying \eqref{Eq4} and some algebraic manipulations, we find that this is equal to \[[x^ny^m]\left(\frac{x}{\sqrt{1-4x-4x^2(y-1)}}\right).\] Standard methods now allow us to see that this expression is equal to $2^{n-2m-2}{n\choose 2m+2}{2m+2\choose m+1}$.  
\end{proof}

Recently, Bruner proved that \[|\Av_n(2431, 4231, 1432, 4132)|={2n-2\choose n-1}.\] She also listed several other permutation classes that appear to be enumerated by central binomial coefficients but did not prove that this is the case. One of these classes is $\Av(1243,2143,2413,2431)$. Of course, a permutation is in this class if and only if its reverse is in the class \[\Av(1342,3142,3412,3421)=s^{-1}(\Av(132,312,321)).\] Therefore, the following corollary, which follows immediately from Theorem \ref{Thm16} and the discussion immediately preceding that theorem, settles one of the enumerative problems that Bruner listed.

\begin{corollary}
We have \[|\Av_n(1342,3142,3412,3421)|={2n-2\choose n-1}.\] The number of elements of $\Av_n(1342,3142,3412,3421)$ with $m$ descents is \[{n-1\choose m}^2.\] The number of elements of $\Av_n(1342,3142,3412,3421)$ with $m$ peaks is \[2^{n-2m-2}{n\choose 2m+2}{2m+2\choose m+1}.\]
\end{corollary}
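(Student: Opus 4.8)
The plan is to deduce everything from Theorem~\ref{Thm16} by a trivial symmetry argument, since the corollary is stated to follow ``immediately'' from that theorem and the surrounding discussion. First I would record the key structural fact: the map sending a permutation $\pi=\pi_1\cdots\pi_n$ to its reverse $\pi^r=\pi_n\cdots\pi_1$ is a bijection on $S_n$, and it sends the class $\Av(1243,2143,2413,2431)$ onto the class $\Av(1342,3142,3412,3421)$, because reversing a pattern of length $4$ reverses each of the four basis elements: $1243\mapsto 3421$, $2143\mapsto 3412$, $2413\mapsto 3142$, $2431\mapsto 1342$. Hence $|\Av_n(1243,2143,2413,2431)|=|\Av_n(1342,3142,3412,3421)|$, and the same equality holds if we track any statistic that is preserved by reversal.

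Next I would invoke the identification, already recorded in the excerpt, that $s^{-1}(\Av(132,312,321))=\Av(1342,3142,3412,3421)$ as a permutation class. Combining this with the first part of Theorem~\ref{Thm16}, which gives $|s^{-1}(\Av_n(132,312,321))|=\binom{2n-2}{n-1}$, immediately yields $|\Av_n(1342,3142,3412,3421)|=\binom{2n-2}{n-1}$, which is the first assertion of the corollary.

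For the refined statements I would observe that the number of descents and the number of peaks are \emph{not} preserved by reversal in general, so I cannot simply cite the symmetry argument of the first paragraph; instead I would work directly with the class $\Av(1342,3142,3412,3421)=s^{-1}(\Av_n(132,312,321))$. Since reversing a permutation turns descents into ascents and vice versa, a permutation $\rho\in S_n$ has $m$ descents if and only if $\rho^r$ has $n-1-m$ descents, and $\rho$ has $m$ peaks if and only if $\rho^r$ has $m$ peaks (a peak of $\rho$ at position $i$ becomes a peak of $\rho^r$ at position $n+1-i$). But here there is no need for reversal at all: Theorem~\ref{Thm16} already states that the number of elements of $s^{-1}(\Av_n(132,312,321))$ with $m$ descents is $\binom{n-1}{m}^2$ and the number with $m$ peaks is $2^{n-2m-2}\binom{n}{2m+2}\binom{2m+2}{m+1}$, and since $\Av_n(1342,3142,3412,3421)$ \emph{equals} $s^{-1}(\Av_n(132,312,321))$ as a set, these two formulas transfer verbatim.

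The only genuine content of the proof is therefore a correct bookkeeping check, and I expect the main (very mild) obstacle to be exactly that: making sure the basis of $\Av(1243,2143,2413,2431)$ is the pattern-wise reverse of the basis $\{1342,3142,3412,3421\}$, so that the ``reverse of the class'' really is the class whose cardinality and descent/peak distribution Theorem~\ref{Thm16} computes. Once that one-line verification is in place, the corollary follows by quoting Theorem~\ref{Thm16} and the set equality $s^{-1}(\Av(132,312,321))=\Av(1342,3142,3412,3421)$ recorded at the start of Section~\ref{Sec5}.
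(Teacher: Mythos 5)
Your proposal is correct and follows the same route as the paper: the corollary is immediate from the set equality $s^{-1}(\Av(132,312,321))=\Av(1342,3142,3412,3421)$ stated at the start of Section~\ref{Sec5} together with Theorem~\ref{Thm16}, with the reversal of the basis $\{1243,2143,2413,2431\}$ only needed to connect the result to Bruner's problem, exactly as in the paper's surrounding discussion. Your extra care in noting that reversal flips descents (and so is not used for the refined counts) is a correct and harmless addition.
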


\section{$s^{-1}(\Av(231,312,321))$}\label{Sec6} 

We will find it convenient to identify a permutation with a configuration of points in the plane via its plot. Doing so, we can build permutations by placing the plots of smaller permutations in various configurations. For example, if $\lambda=\lambda_1\cdots\lambda_\ell\in S_\ell$ and $\mu=\mu_1\ldots\mu_m\in S_m$, then the \emph{sum} of $\lambda$ and $\mu$, denoted $\lambda\oplus\mu$, is obtained by placing the plot of $\mu$ above and to the right of the plot of $\lambda$. More formally, the $i^\text{th}$ entry of $\lambda\oplus\mu$ is \[(\lambda\oplus\mu)_i=\begin{cases} \lambda_i & \mbox{if } 1\leq i\leq \ell; \\ \mu_{i-\ell}+\ell & \mbox{if } \ell+1\leq i\leq \ell+m. \end{cases}\] 

Let $\text{Dec}_a=a(a-1)\cdots 1\in S_a$ denote the decreasing permutation of length $a$. The permutations in $\Av(231,312)$ are called \emph{layered}; each is of the form $\text{Dec}_{a_1}\oplus\cdots\oplus \text{Dec}_{a_t}$ for some composition $(a_1,\ldots,a_t)$. For example, $32154687=321\oplus 21\oplus 1\oplus 21$ is the layered permutation corresponding to the composition $(3,2,1,2)$. Under this correspondence between layered permutations and compositions, the permutations in $\Av(231,312,321)$ correspond to compositions whose parts are all at most $2$. It follows that these permutations are counted by the Fibonacci numbers.  

The set $s^{-1}(\Av(231,312,321))$ is actually just the permutation class $\Av(2341,3241,3412,3421)$. A permutation $\sigma$ is in this class if and only if the first and third entries in every $231$ pattern in $\sigma$ are consecutive integers. For example, $236541$ is not in this class because the entries $3,6,1$ form a $231$ pattern while $3$ and $1$ are not consecutive integers. In this section, we use valid hook configurations to derive a formula for $|s^{-1}(\Av_n(231,312,321))|$. We then enumerate this class directly; showing that these permutations are counted by the terms in sequence A049124 in the Online Encyclopedia of Integer Sequences \cite{OEIS}. Together, these results give a new formula and a new combinatorial interpretation for the terms in this sequence. 

Let $\Comp_a(b)$ denote the set of all compositions of $b$ into $a$ parts (that is, $a$-tuples of positive integers that sum to $b$). Define a partial order $\preceq$ on $\Comp_a(b)$ by declaring that $(x_1,\ldots,x_a)\preceq(y_1,\ldots,y_a)$ if $\sum_{i=1}^\ell x_i\leq\sum_{i=1}^\ell y_i$ for all $\ell\in\{1,\ldots,a\}$. A \emph{partition} is a composition whose parts are nonincreasing. Following \cite{Stanley}, we let $L(u,v)$ denote the set of all partitions (including the empty partition) with at most $u$ parts and with largest part at most $v$. Endow $L(u,v)$ with a partial order $\leq$ by declaring that $(\lambda_1,\ldots,\lambda_\ell)\leq(\mu_1,\ldots,\mu_m)$ if $\ell\leq m$ and $\lambda_i\leq\mu_i$ for all $i\in\{1,\ldots,\ell\}$. Geometrically, $L(u,v)$ is the set of all partitions whose Young diagrams fit inside a $u\times v$ rectangle, and $\lambda\leq\mu$ if and only if the Young diagram of $\lambda$ fits inside of the Young diagram of $\mu$. 

Given $x=(x_1,\ldots,x_a)\in\Comp_a(b)$, let $\psi(x)\in L(b-a,a-1)$ be the partition that has exactly $x_i-1$ parts of length $a-i$ for all $i\in\{1,\ldots,a-1\}$. The map $\psi:\Comp_a(b)\to L(b-a,a-1)$ is an isomorphism of posets. For $x\in\Comp_a(b)$, let \[D_x=|\{y\in\Comp_a(b):y\preceq x\}|.\] Equivalently, $D_{\psi^{-1}(\lambda)}$ is the number of partitions (including the empty partition) whose Young diagrams fit inside of the Young diagram of the partition $\lambda$. Recall the notation $C_{(x_0,\ldots,x_k)}=\prod_{t=0}^k C_{x_t}$, where $C_j$ is the $j^\text{th}$ Catalan number. 
 
\begin{theorem}\label{Thm8}
Preserving the notation of the preceding paragraph with $a=k+1$ and $b=n-k$, we have \[|s^{-1}(\Av_n(231,312,321))|=\sum_{k=0}^{n-1}\sum_{q\,\in\,\Comp_{k+1}(n-k)}C_qD_q=\sum_{k=0}^{n-1}\sum_{\lambda\in L(n-2k-1,k)}C_{\psi^{-1}(\lambda)}D_{\psi^{-1}(\lambda)}.\]
\end{theorem}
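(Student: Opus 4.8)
The plan is to apply Theorem~\ref{Thm5} over all permutations in $\Av_n(231,312,321)$, reorganized according to the number of descents, and then to identify the set of valid compositions of each such permutation with an order ideal in $\Comp_{k+1}(n-k)$ under the order $\preceq$. First I would use the characterization of $\Av(231,312)$ as the layered permutations: a permutation in $\Av_n(231,312,321)$ corresponds to a composition $(a_1,\ldots,a_t)$ of $n$ with all parts equal to $1$ or $2$, and its plot is $\mathrm{Dec}_{a_1}\oplus\cdots\oplus\mathrm{Dec}_{a_t}$. Such a permutation has exactly $k$ descents, where $k$ is the number of parts equal to $2$; its ascending runs have the shape dictated by the composition. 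The key computational step is to determine $\mathcal V(\pi)$ for such a $\pi$. I expect to find, by a direct analysis of where hooks can land (each descent top of a layered permutation is the top of a size-$2$ layer, and its hook must reach up to some later layer), that $\mathcal V(\pi)$ is precisely a principal order ideal: there is a maximal valid composition $q^{\max}(\pi)\in\Comp_{k+1}(n-k)$ such that $\mathcal V(\pi)=\{q\in\Comp_{k+1}(n-k):q\preceq q^{\max}(\pi)\}$, whence $\sum_{q\in\mathcal V(\pi)}C_q D_q$-type bookkeeping is not quite what appears, so instead $\sum_{q\in\mathcal V(\pi)}C_q$ becomes, upon summing over all layered $\pi$ with $k$ descents, exactly $\sum_{q\in\Comp_{k+1}(n-k)}C_q D_q$.

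More precisely, I would argue that as $\pi$ ranges over the layered permutations in $S_n$ with exactly $k$ descents, the maximal valid compositions $q^{\max}(\pi)$ range over \emph{all} of $\Comp_{k+1}(n-k)$, bijectively. This is the heart of the matter: one must check that the map sending a layered permutation to $q^{\max}(\pi)$ is a bijection onto $\Comp_{k+1}(n-k)$, which I would verify by explicitly reconstructing the layer composition $(a_1,\ldots,a_t)$ (equivalently, the positions of the size-$2$ layers) from a prescribed target composition and using Theorem~\ref{Thm4} (or a direct hook-counting argument via the canonical valid hook configuration) to confirm that this target is indeed the maximal element of $\mathcal V(\pi)$ under $\preceq$. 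Granting this, we get
\[
|s^{-1}(\Av_n(231,312,321))|=\sum_{\pi\in\Av_n(231,312,321)}\,\sum_{q\in\mathcal V(\pi)}C_q=\sum_{k=0}^{n-1}\,\sum_{q^{\max}\in\Comp_{k+1}(n-k)}\,\sum_{q\preceq q^{\max}}C_q.
\]
Interchanging the order of summation in the inner double sum — summing first over $q$ and then over all $q^{\max}\succeq q$ — turns $\sum_{q^{\max}}\sum_{q\preceq q^{\max}}C_q$ into $\sum_{q}C_q\cdot|\{q^{\max}:q^{\max}\succeq q\}|$. But by the order-reversing/self-dual structure of $\Comp_{k+1}(n-k)$ under $\preceq$ (equivalently, the complementation $\lambda\mapsto$ complement-in-the-$(n-2k-1)\times k$-box on the partition side via $\psi$), the number of $q^{\max}\succeq q$ equals the number of $y\preceq q$, which is $D_q$. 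This yields $\sum_{k=0}^{n-1}\sum_{q\in\Comp_{k+1}(n-k)}C_q D_q$, the first claimed expression.

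The second equality, $\sum_{q\in\Comp_{k+1}(n-k)}C_q D_q=\sum_{\lambda\in L(n-2k-1,k)}C_{\psi^{-1}(\lambda)}D_{\psi^{-1}(\lambda)}$, is then almost a matter of definition: the map $\psi:\Comp_{k+1}(n-k)\to L(n-2k-1,k)$ (taking $a=k+1$, $b=n-k$, so $b-a=n-2k-1$ and $a-1=k$) is a poset isomorphism, and $D_x$ and $C_x$ are invariant under relabeling by $\psi$ (with $D_{\psi^{-1}(\lambda)}$ counting subdiagrams of $\lambda$, as recalled in the paragraph preceding the theorem), so the two sums are term-by-term equal. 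The main obstacle is the bijection and maximality claim in the middle step: showing that $\mathcal V(\pi)$ is exactly the principal ideal generated by $q^{\max}(\pi)$ and that $q^{\max}$ sweeps out all of $\Comp_{k+1}(n-k)$ as $\pi$ varies. Verifying that $\mathcal V(\pi)$ is a $\preceq$-ideal should follow cleanly from condition~(a) of Theorem~\ref{Thm4} applied to layered permutations (where conditions~(b) become redundant or subsumed because the $\alpha_j$ and $d_j$ have a very rigid form); pinning down $q^{\max}$ and its surjectivity is the delicate combinatorial bookkeeping I would expect to occupy most of the proof.
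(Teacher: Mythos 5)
Your overall architecture --- apply Theorem \ref{Thm5} to the layered permutations with $k$ descents, use Theorem \ref{Thm4} to identify $\mathcal V(\pi)$ with a principal order-theoretic subset of $\Comp_{k+1}(n-k)$, then interchange the order of summation --- is exactly the paper's, but your key structural claim has the order inverted, and this is a genuine error rather than a harmless convention. For a layered $\pi\in\Av_n(231,312,321)$ with descents $d_1<\cdots<d_k$, it is condition (b) of Theorem \ref{Thm4} (at $m=0$; condition (a) is the vacuous one here, contrary to what you predict) that bites, and it imposes \emph{lower} bounds $\sum_{j=0}^p q_j\geq d_{p+1}-p$ on the partial sums. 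Hence $\mathcal V(\pi)=\{q: y(\pi)\preceq q\}$ is a principal \emph{filter} whose \emph{minimum} is the canonical valid composition $y(\pi)$; it is not a principal ideal $\{q:q\preceq q^{\max}(\pi)\}$ with a varying maximum. Indeed, every sorted layered $\pi$ with $k$ descents has the \emph{same} maximal valid composition $(n-2k,1,\ldots,1)$, so ``$q^{\max}(\pi)$'' cannot sweep out $\Comp_{k+1}(n-k)$ bijectively; what varies bijectively is the minimum $y(\pi)$ (permutations ending in a descent are not sorted and contribute empty sets). A small counterexample to your claim: for $\pi=1324$ one computes $\mathcal V(\pi)=\{(2,1)\}$, which is not a down-set of $\Comp_2(3)=\{(1,2)\prec(2,1)\}$.

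Your compensating step is also false as stated: in this poset the number of elements $\succeq q$ equals $D_{\rho(q)}$, where $\rho$ is the order-reversing involution (reversal of the composition, i.e.\ box-complementation under $\psi$), not $D_q$; for instance $(2,1)\in\Comp_2(3)$ has two elements below it but only one above it. Your two inversions happen to cancel numerically, because $\rho$ preserves the multiset of parts and hence $C_{\rho(q)}=C_q$, so $\sum_q C_q\,|\{x\succeq q\}|=\sum_q C_qD_{\rho(q)}=\sum_q C_qD_q$; but you neither state nor prove this, and the structural claim it would rest on is wrong anyway. The repair is simply to flip the direction: show, as the paper does, that $\mathcal V(\pi)$ is the up-set generated by $y(\pi)$ and that $\pi\mapsto y(\pi)$ is a bijection onto $\Comp_{k+1}(n-k)$ (on sorted layered permutations); then interchanging summation immediately produces the factor $|\{y:y\preceq q\}|=D_q$ with no appeal to self-duality. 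Your handling of the second equality via the poset isomorphism $\psi$ is correct and matches the paper.
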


\begin{proof}
Recall that each permutation in $\Av(231,312,321)$ is of the form $\text{Dec}_{a_1}\oplus\cdots\oplus\text{Dec}_{a_t}$, where $a_i\in\{1,2\}$ for all $i$. Suppose $\pi\in\Av_n(231,312,321)$ has descents $d_1<\cdots<d_k$. For $p\in\{0,\ldots,k-1\}$, let $u_p=d_{p+1}-p$. Defining $y_0=u_0$, $y_i=u_i-u_{i-1}$ for $1\leq i\leq k-1$, and $y_k=n-k-u_{k-1}$, we obtain a composition $y=(y_0,\ldots,y_k)\in\Comp_{k+1}(n-k)$. Given this composition $y$, we can easily reconstruct the permutation $\pi$. Thus, there is a bijective correspondence between compositions in $\Comp_{k+1}(n-k)$ and permutations in $\Av_n(231,312,321)$ with $k$ descents. 

We are going to use Theorem \ref{Thm4} to describe all of the valid compositions of $\pi$. Preserve the notation from that theorem and the discussion immediately preceding it. We must compute the canonical valid hook configuration $\mathcal H^*=(H_1^*,\ldots,H_k^*)$ of $\pi$. This is fairly simple to do: the hook $H_i^*$ has southwest endpoint $(d_i,\pi_{d_i})$ and northeast endpoint $(d_i+2,\pi_{d_i+2})$. Thus, $b_i^*=d_i+2$. We also have $(q_0^*,\ldots,q_k^*)=(n-2k,1,\ldots,1)$, $e_0=k+1$, and $e_i=i+1$ for all $i\in\{1,\ldots,k\}$. Finally, $\alpha_1=0$, and $\alpha_i=1$ for all $i\in\{2,\ldots,k+1\}$.

Every composition $(q_0,\ldots,q_k)\in\Comp_{k+1}(n-k)$ satisfies condition $(a)$ in Theorem \ref{Thm4}. In condition $(b)$, the inequality $m\leq p\leq e_m-2$ is only satisfied when $m=0$. When $m=0$, \[d_{p+1}-d_m-\sum_{j=m+1}^{p+1}\alpha_j=d_{p+1}-p=u_p.\] Hence, Theorem \ref{Thm4} tells us that a composition $q=(q_0,\ldots,q_k)\in\Comp_{k+1}(n-k)$ is a valid composition of $\pi$ if and only if \[\sum_{j=0}^pq_j\geq u_p\] for all $p\in\{0,\ldots,k-1\}$. This occurs if and only if $y\preceq q$. 

Combining these observations with Theorem \ref{Thm5}, we find that (recall the definition of 
\linebreak $\Av_{n,k}(\tau^{(1)},\ldots,\tau^{(r)})$ from Definition \ref{Def1}) \[|s^{-1}(\Av_n(231,312,321))|=\sum_{k=0}^{n-1}\,\sum_{\pi\in\Av_{n,k}(231,312,321)}\,\sum_{q\in\mathcal V(\pi)}C_q=\sum_{k=0}^{n-1}\sum_{y\in\Comp_{k+1}(n-k)}\sum_{\substack{q\in\Comp_{k+1}(n-k)\\y\preceq q}}C_q\]
\[=\sum_{k=0}^{n-1}\sum_{q\,\in\,\Comp_{k+1}(n-k)}C_qD_q.\] The identity \[\sum_{k=0}^{n-1}\sum_{q\,\in\,\Comp_{k+1}(n-k)}C_qD_q=\sum_{k=0}^{n-1}\sum_{\lambda\in L(n-2k-1,k)}C_{\psi^{-1}(\lambda)}D_{\psi^{-1}(\lambda)}\] follows from the discussion immediately preceding this theorem (with $a=k+1$ and $b=n-k$).        
\end{proof}

\begin{theorem}\label{Thm9}
For $n\geq 1$, we have \[|s^{-1}(\Av_n(231,312,321))|=\sum_{k=0}^{n-1}\frac{1}{n+1}{n-k-1\choose k}{2n-2k\choose n}.\]
\end{theorem}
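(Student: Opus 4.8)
\textbf{Proof plan for Theorem \ref{Thm9}.}

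The plan is to directly enumerate the permutation class $s^{-1}(\Av(231,312,321)) = \Av(2341,3241,3412,3421)$ rather than to massage the Catalan-weighted poset sum of Theorem \ref{Thm8}. As noted in the text, a permutation $\sigma$ lies in this class exactly when the first and third entries of every $231$ pattern in $\sigma$ are consecutive integers. First I would set up a structural decomposition of such a permutation based on the position of the entry $n$: writing $\sigma = L\,n\,R$, the constraint forces $R$ to be quite rigid (for instance, any $231$ pattern using $n$ as its ``3'' would need the ``2'' and the ``1'' to straddle a consecutive pair, which severely limits what can appear after $n$), while $L$ must itself avoid the four patterns and interact with $R$ in a controlled way. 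The goal of this step is to obtain a recurrence for $f_n := |s^{-1}(\Av_n(231,312,321))|$, ideally refined by a secondary statistic (such as the value or position of the last entry, or the length of the maximal decreasing suffix), that is clean enough to convert into a generating-function or Lagrange-inversion argument.

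Next I would introduce the bivariate generating function $\Phi(x,t) = \sum_{n\geq 1}\sum_{k\geq 0} a_{n,k}\, x^n t^k$, where $a_{n,k}$ counts the permutations in the class with some appropriate parameter equal to $k$ (I expect $k$ to be essentially the number of descents, consistent with the $\sum_{k=0}^{n-1}$ on the right-hand side of the claimed formula). The summand $\frac{1}{n+1}\binom{n-k-1}{k}\binom{2n-2k}{n}$ strongly suggests a Catalan-type (ballot/Lagrange inversion) structure in one variable twisted by a Fibonacci-type factor $\binom{n-k-1}{k}$ in the other, so I would aim to show $\Phi$ satisfies an algebraic equation of the form $\Phi = x\,(\text{Fibonacci-like insertion})\cdot(1+\Phi)^2$ or similar, and then extract coefficients by Lagrange inversion. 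Concretely, the factor $\frac{1}{n+1}\binom{2n-2k}{n}$ is, up to reindexing, a Catalan-like coefficient $[z^{n}]\,\mathcal{C}(z)^{?}$ after substituting $z$-powers for the "non-layered" Catalan-choices counted by the $C_q$'s in Theorem \ref{Thm8}, while $\binom{n-k-1}{k}$ is the number of compositions of a relevant quantity into parts of size $1$ and $2$ — which is exactly what appears in the layered-permutation indexing set $\Comp_{k+1}(n-k)$ after accounting for the constraint $y \preceq q$.

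The main obstacle I anticipate is establishing the equivalence between the two expressions in Theorem \ref{Thm8} and the closed form here — that is, proving
\[
\sum_{q\in\Comp_{k+1}(n-k)} C_q D_q \;=\; \frac{1}{n+1}\binom{n-k-1}{k}\binom{2n-2k}{n}
\]
(or whatever the $k$-by-$k$ refinement turns out to be). The quantity $D_q$, counting order ideals below $q$ in the composition poset, is combinatorially the number of lattice paths weakly below a fixed path, so $C_q D_q$ is a product of a Catalan-number product with a ballot-type count; I would try to interpret $\sum_q C_q D_q$ as counting pairs (a valid hook configuration–style object, a dominated composition) and find a bijection with a set of lattice paths or Dyck-path-like objects of the size dictated by the binomial product. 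If a direct bijection proves elusive, the fallback is the purely generating-functional route: derive the functional equation for $\Phi$ from the structural decomposition in the first paragraph, and separately show (via the $q$-sum and a standard "sum over ideals = convolution" identity) that the Theorem \ref{Thm8} double sum has the same generating function, so that matching coefficients yields the formula. Either way, the two hard computational nuggets are the functional equation and the Lagrange-inversion extraction, and I expect the algebra there — not the structural setup — to be where the real work lies.
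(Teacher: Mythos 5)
Your high-level strategy (enumerate the class directly, get a functional equation, extract the formula by Lagrange inversion) is the same one the paper uses, but the step you flag as the ``main obstacle'' is a trap. The per-$k$ identity $\sum_{q\in\Comp_{k+1}(n-k)}C_qD_q=\frac{1}{n+1}\binom{n-k-1}{k}\binom{2n-2k}{n}$ is not needed for Theorem \ref{Thm9} at all; it is precisely Conjecture \ref{Conj1}, which the paper leaves open. The same goes for your bivariate $\Phi(x,t)$ whose coefficient of $t^k$ is supposed to equal the $k$-th summand: that is exactly the open refinement $|s^{-1}(\Av_{n,k}(231,312,321))|=\frac{1}{n+1}\binom{n-k-1}{k}\binom{2n-2k}{n}$, and the statistic $k$ there is the number of descents of the sorted image $s(\sigma)\in\Av_n(231,312,321)$, not of the permutations being counted, so ``essentially the number of descents'' of the preimage is the wrong guess anyway. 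If your argument hinges on either of these, you are aiming at something strictly harder than the theorem; the aggregated identity (summed over $k$) falls out for free once Theorem \ref{Thm9} is proved by other means, as the paper itself observes.

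The part that actually does the work is the structural decomposition, and there your plan is both different from the paper's and unverified. The paper does not split around the maximum entry: it splits the nonempty ``special'' permutations according to whether the first entry participates in a $231$ pattern, building the first type from an ordered pair of special permutations (contributing $xH(x)^2$) and the second from a triple $(\lambda,\tau,\mu)$ with $\mu$ nonempty and the first descending run of $\lambda$ split off (contributing $x^2H(x)^2(H(x)-1)$), which yields $H(x)=1+xH(x)^2+x^2H(x)^2(H(x)-1)$, i.e.\ $H=1+xH^2/(1-x^2H^2)$; Lagrange inversion (equivalently $A(x)=xH(x)$ satisfying $A=x+A^2/(1-A^2)$, the A049124 equation) then gives the stated sum. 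Your $\sigma=LnR$ decomposition is not obviously unworkable, but the interaction is subtler than ``$R$ is rigid'': the consecutive-integer condition applied to patterns using $n$ forces the value sets of $L$ and $R$ to be nearly separated, with at most one exceptional value of $L$ exceeding values of $R$, and you must still control $231$ patterns inside $L$, inside $R$, and straddling the two without $n$ (the condition is on actual values, so $L$ and $R$ are not freely chosen class members). Until you carry such a decomposition through to an explicit recurrence or functional equation and perform the coefficient extraction, the proposal has no complete argument for the key step.
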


\begin{proof}
Let $h(n)=|s^{-1}(\Av_n(231,312,321))|=|\Av_n(2341,3241,3412,3421)|$ (with $h(0)=1$), and put \[H(x)=\sum_{n\geq 0}h(n)x^n.\] We will prove that 
\begin{equation}\label{Eq6}
H(x)=1+\frac{xH(x)^2}{1-x^2H(x)^2}.
\end{equation}
Lagrange inversion (the details of which we omit) then allow one to extract the desired formula from this functional equation for $H(x)$. In fact, \[\sum_{k=0}^{n-1}\frac{1}{n+1}{n-k-1\choose k}{2n-2k\choose n}\] is the $(n+1)^\text{th}$ term of the sequence A049124 in the Online Encyclopedia of Integers Sequences \cite{OEIS}, and the generating function $A(x)$ of that sequence satisfies $A(x)=x+\dfrac{A(x)^2}{1-A(x)^2}$ (implying that $A(x)=xH(x)$). 

We can rewrite \eqref{Eq6} as 
\begin{equation}\label{Eq5}
H(x)=1+xH(x)^2+x^2H(x)^2(H(x)-1);
\end{equation}
it is this form of the equation that we will prove. For convenience, say a permutation is \emph{special} if it is an element of $s^{-1}(\Av(231,312,321))$ (including the empty permutation). As mentioned above, a permutation $\sigma=\sigma_1\cdots\sigma_n$ is special if and only if the first and third entries in every $231$ pattern in $\sigma$ are consecutive integers. Note that the empty permutation is special and accounts for the term $1$ on the right-hand side of \eqref{Eq5}. We now show how to construct nonempty special permutations. It is convenient to split these permutations into two types. 

The first type of nonempty special permutation $\sigma$ is that in which the first entry $\sigma_1$ is not part of a $231$ pattern. As mentioned above, we identify permutations with their plots so that we can build large permutations by arranging plots of smaller permutations. Each nonempty special permutation of the first type is formed by choosing two special permutations $\eta$ and $\zeta$ and arranging them as follows: 
\begin{center}
\includegraphics[width=.15\linewidth]{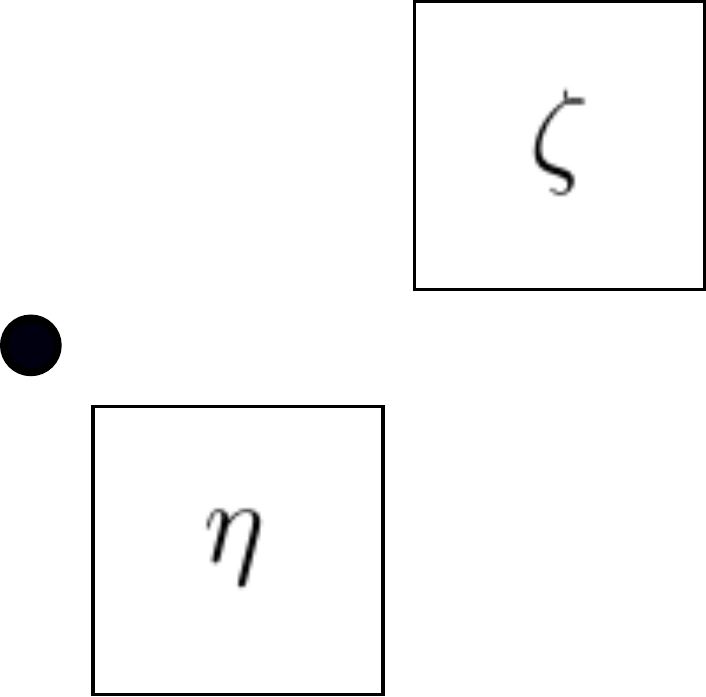}.
\end{center} 
Accordingly, the nonempty special permutations of the first type contribute the term $xH(x)^2$ to the right-hand side of \eqref{Eq5}. 
 
The second type of nonempty special permutation $\sigma$ is that in which $\sigma_1$ is part of a $231$ pattern. We describe how to build such a permutation, leaving the reader to check that every nonempty special permutation of the second type is built uniquely via this procedure. 

Begin by choosing special permutations $\lambda=\lambda_1\cdots\lambda_\ell$, $\tau=\tau_1\cdots\tau_t$, and $\mu=\mu_1\cdots\mu_m$ such that $\mu$ is nonempty. Write $\mu'=\mu_2\cdots\mu_m$ (so $\mu'$ is empty if $m=1$). Write $\lambda=\lambda'\lambda''$, where $\lambda'$ is the first descending run of $\lambda$. In other words, $\lambda'=\lambda_1\cdots\lambda_j$, where $j$ is the smallest index that is not a descent of $\lambda$. If $\lambda$ is empty, then $\lambda'$ and $\lambda''$ are also empty. The plot of $\sigma$ is formed by arranging the plots of $\lambda,\tau,$ and $\mu$ as follows: 
\begin{center}
\includegraphics[width=.35\linewidth]{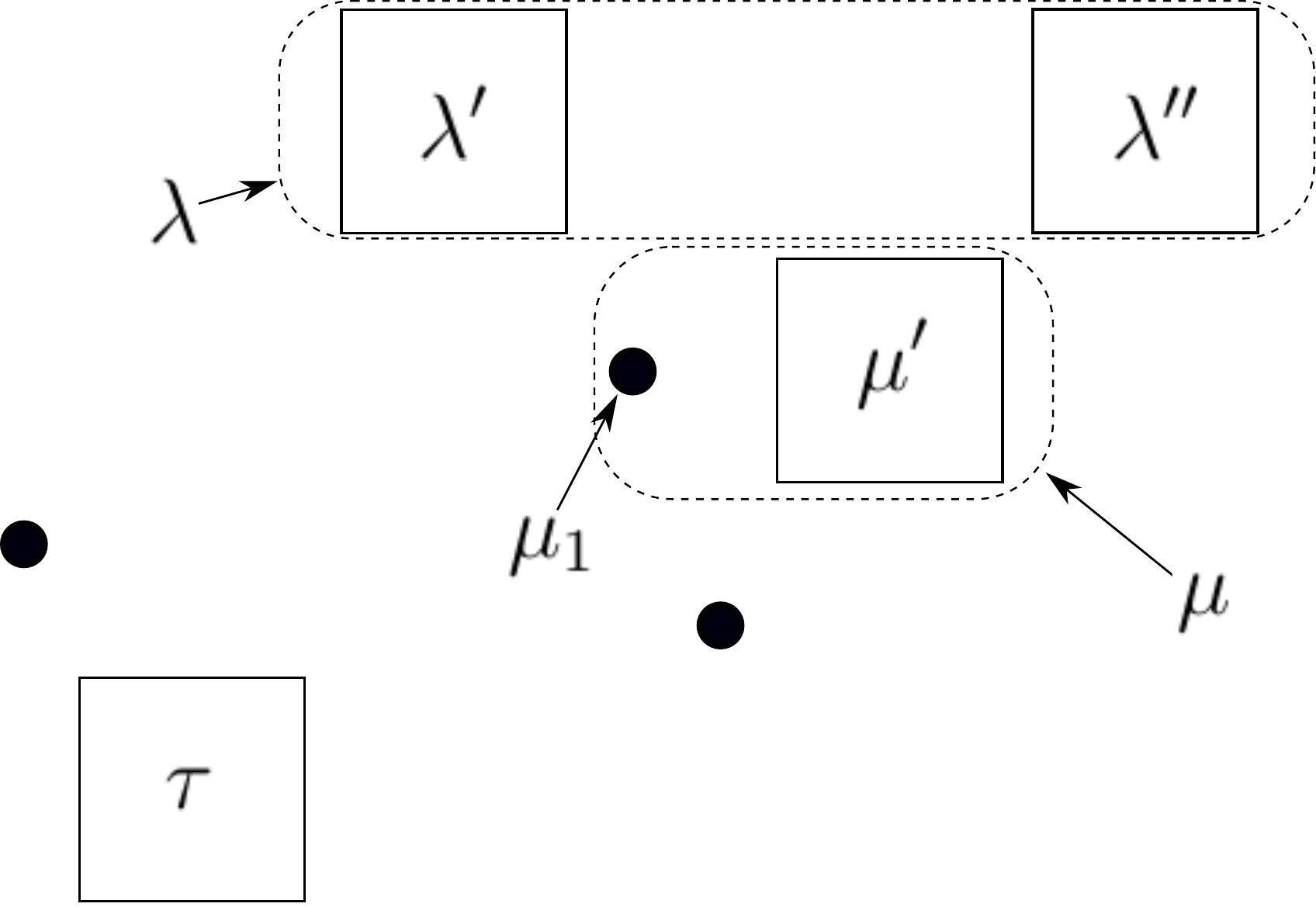}.
\end{center} 
The points in the section labeled $\mu$ are arranged vertically so that they form a permutation that is order isomorphic to $\mu$. In other words, the point labeled $\mu_1$ should be placed higher than exactly $\mu_1-1$ of the points in the box labeled $\mu'$. Similarly, the points in the section labeled $\lambda$ should form a permutation that is order isomorphic to $\lambda$. 

This construction shows that the nonempty special permutations of the second type contribute the term $x^2H(x)^2(H(x)-1)$ to the right-hand side of \eqref{Eq5}.      
\end{proof}

\begin{example}\label{Exam2}
Let us illustrate the part of the proof of Theorem \ref{Thm9} that describes the construction of a nonempty special permutation of the second type. Choose $\tau=21$, $\lambda=31542$, and $\mu=231$. The resulting permutation is $4\,\,2\,\,1\,\,10\,\,8\,\,6\,\,3\,\,7\,\,5\,\,12\,\,11\,\,9$, whose plot is 
\begin{center}
\includegraphics[width=.29\linewidth]{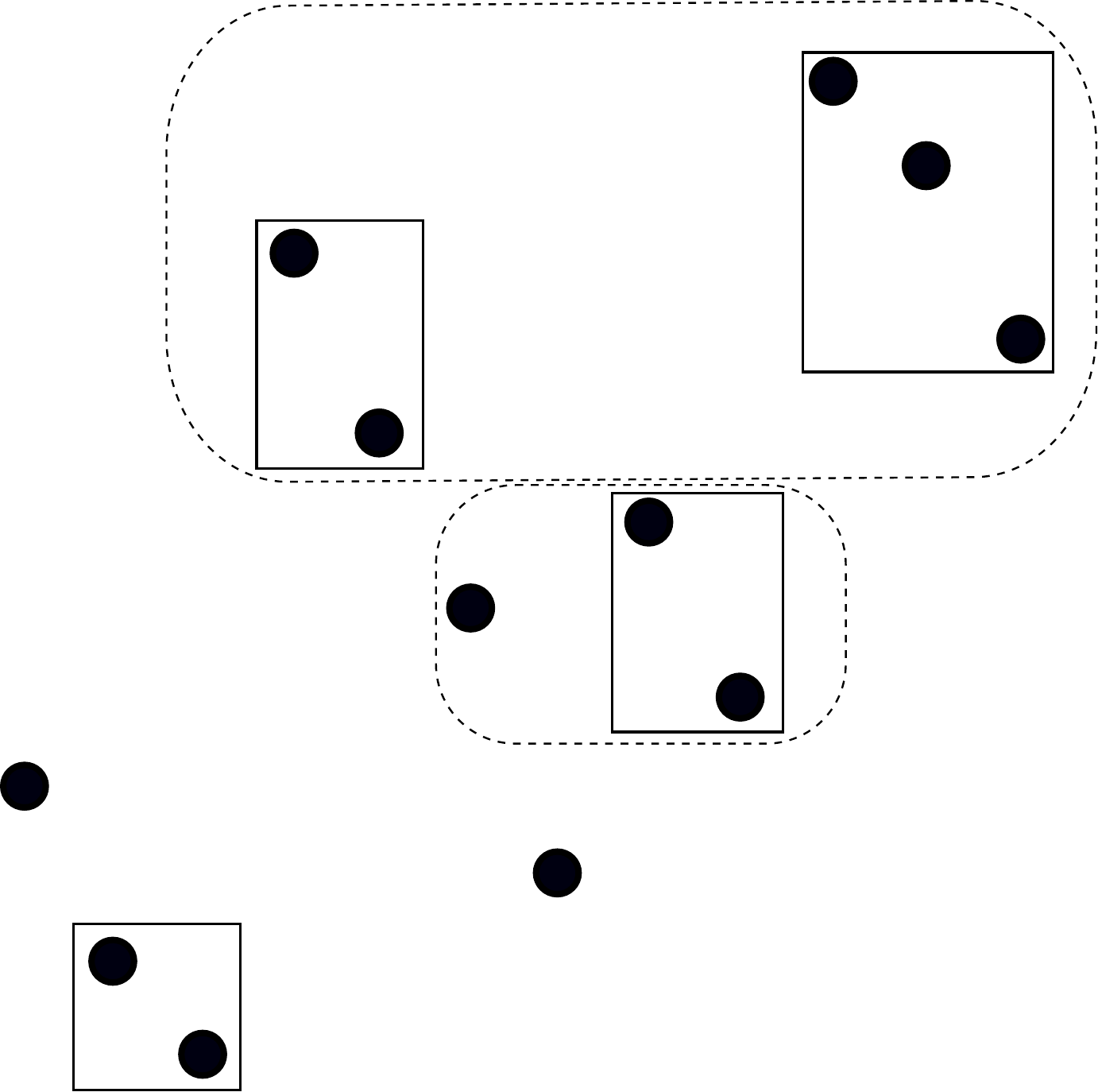}.
\end{center} 
The extra boxes drawn in this plot are intended to highlight the appearances of $\tau,\lambda$, and $\mu$ in the plot. 
\end{example}

Combining Theorem \ref{Thm8} and Theorem \ref{Thm9} yields the identity \[\sum_{k=0}^{n-1}\sum_{q\,\in\,\Comp_{k+1}(n-k)}C_qD_q=\sum_{k=0}^{n-1}\frac{1}{n+1}{n-k-1\choose k}{2n-2k\choose n}.\] Numerical evidence suggests the following conjecture. 

\begin{conjecture}\label{Conj1}
In the notation of Theorem \ref{Thm8}, we have \[\sum_{q\,\in\,\Comp_{k+1}(n-k)}C_qD_q=\frac{1}{n+1}{n-k-1\choose k}{2n-2k\choose n}\] for all nonnegative integers $n$ and $k$. 
\end{conjecture}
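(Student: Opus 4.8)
Fix $k\ge 0$ and set $m=n-k$. (The identity is only meaningful, and only literally correct, for $m\ge 1$: when $m=0$ the left side is an empty sum while the right side equals $1$ if $k=0$ and $0$ otherwise, so one restricts to $n\ge 1$; the case $k=0$ is then immediate, since $\Comp_1(m)=\{(m)\}$ and $D_{(m)}=1$.) I would prove the conjecture by computing the full generating function
\[\mathcal T(x,z):=\sum_{k\ge0}\ \sum_{m\ge1}\Bigl(\sum_{q\in\Comp_{k+1}(m)}C_qD_q\Bigr)x^m z^{k+1}\]
in closed form and then extracting its coefficients. The decisive first step is to re-encode the objects weighted by $C_qD_q$. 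For $q=(q_0,\dots,q_k)\in\Comp_{k+1}(m)$ with partial sums $Q_\ell=q_0+\cdots+q_\ell$, a composition $\mathbf y\preceq q$ is the same datum as its sequence of \emph{gaps} $a_\ell:=Q_\ell-Y_\ell$: one checks that $\mathbf y\mapsto(a_0,\dots,a_{k-1})$ is a bijection onto the set of tuples of nonnegative integers satisfying $a_\ell\le a_{\ell-1}+q_\ell-1$ for $0\le\ell\le k$, where $a_{-1}=a_k=0$. Hence $\mathcal T$ becomes a generating function for lattice walks: a term of $\mathcal T$ is a walk $0=a_{-1}\to a_0\to\cdots\to a_{k-1}\to a_k=0$ in $\mathbb Z_{\ge0}$ with $k+1\ge 1$ steps, where the $\ell$th step carries a parameter $q_\ell\ge\max(1,\,a_\ell-a_{\ell-1}+1)$ and contributes a factor $z\,C_{q_\ell}x^{q_\ell}$.

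I would then solve this walk by an elementary transfer argument. Write $C(x)=\sum_{j\ge0}C_jx^j=\frac{1-\sqrt{1-4x}}{2x}$, so that $C=1+xC^2$. The key point is that a step ending at gap $0$ is unconstrained -- any $q\ge 1$ is allowed -- so it always contributes the ``plain'' weight $z(C(x)-1)$. Peeling off the last step of a nonempty walk from $0$ to $0$ gives $\mathcal T=z(C(x)-1)\,\mathbf W$, where $\mathbf W$ is the generating function of all walks from gap $0$ to an arbitrary gap. If $V(g)$ denotes the generating function of walks starting at gap $g$ (and ending anywhere), then
\[V(g)=1+z\sum_{q\ge1}C_q x^q\sum_{g'=0}^{g+q-1}V(g'),\]
and the ansatz $V(g)=v\,r^g$ satisfies this recursion precisely when $r=1-z+zC(xr)$ and $v=(r-1)/(z(C(x)-1))$; since $r=1+O(x)$ these are well-defined power series, and a degree count shows the power series solution is unique. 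As $\mathbf W=V(0)=v$, we obtain $\mathcal T=z(C(x)-1)\,v=r-1$; writing $r=1+\mathcal T$ this is the functional equation
\[\mathcal T=z\bigl(C\bigl(x(1+\mathcal T)\bigr)-1\bigr).\]

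To finish, I would substitute $\mathcal T=zE$ and use $C=1+xC^2$ to rewrite the equation as $E=x(1+zE)(1+E)^2$. Lagrange inversion in $x$ (treating $z$ as a parameter) then gives
\[[x^m]E=\frac1m\,[t^{m-1}]\,(1+zt)^m(1+t)^{2m}=\frac1m\sum_{a\ge0}\binom ma\binom{2m}{m-1-a}z^a,\]
so $\sum_{q\in\Comp_{k+1}(m)}C_qD_q=[x^mz^{k+1}]\mathcal T=[x^mz^k]E=\frac1m\binom mk\binom{2m}{m-1-k}$. Using $\binom{2m}{m-1-k}=\frac{m-k}{m+k+1}\binom{2m}{m+k}$ and $\binom mk=\frac{m}{m-k}\binom{m-1}{k}$, this equals $\frac{1}{m+k+1}\binom{m-1}{k}\binom{2m}{m+k}$, which (with $m=n-k$) is exactly $\frac{1}{n+1}\binom{n-k-1}{k}\binom{2n-2k}{n}$, as claimed.

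I expect the only genuinely nonroutine step to be the first one: recognizing that the condition $\mathbf y\preceq q$ should be transported to the gap sequence $(a_\ell)$, which turns the awkward double sum into a one-dimensional weighted walk amenable to a transfer/kernel computation; everything afterward is standard. As a consistency check, specializing to $z=x$ (so that $\mathcal T(x,x)=x(H(x)-1)$) collapses the functional equation to $H=1+xH^2+x^2H^2(H-1)$, i.e.\ to \eqref{Eq5}, so this argument simultaneously reproves Theorem \ref{Thm9}.
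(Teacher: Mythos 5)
You should first note the ground truth here: the paper does \emph{not} prove this statement. Conjecture \ref{Conj1} is left open (supported only by numerical evidence), so there is no proof in the paper to compare yours with; if your plan is written out in full, it settles the conjecture and, via the remark following it in the paper, also gives the refined formula $|s^{-1}(\Av_{n,k}(231,312,321))|=\frac{1}{n+1}\binom{n-k-1}{k}\binom{2n-2k}{n}$. Having checked your steps, I find the argument sound. The gap encoding $a_\ell=Q_\ell-Y_\ell$ really does turn the pairs $y\preceq q$ weighted by $C_q$ into the weighted walks you describe (the positivity of $y_\ell$ becomes $a_\ell\le a_{\ell-1}+q_\ell-1$, and $a_k=0$ is automatic since both are compositions of $m$); a step ending at gap $0$ indeed carries the unconstrained weight $z(C(x)-1)$, so peeling the last step gives $\mathcal T=z(C(x)-1)V(0)$; the system of recursions for $\{V(g)\}_{g\ge0}$ has a unique power-series solution (induction on the $x$-degree, since every step contributes $x^{q}$ with $q\ge1$), and your ansatz $v\,r^g$ with $r-1=z(C(xr)-1)$, $v=(r-1)/(z(C(x)-1))$ satisfies it, whence $\mathcal T=r-1$ and $\mathcal T=z\bigl(C(x(1+\mathcal T))-1\bigr)$; the passage to $E=\mathcal T/z$, the equation $E=x(1+zE)(1+E)^2$, the Lagrange inversion, and the binomial simplification all check out (I verified, e.g., $(n,k)=(5,1)$ and $(6,2)$, where both sides give $28$ and $12$).

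Compared with what the paper does prove, your route is genuinely different and strictly stronger in this direction: the paper's Theorem \ref{Thm9} obtains only the sum over $k$, by a structural decomposition of the permutations in $\Av(2341,3241,3412,3421)$, whereas your computation never touches permutations at all and works directly with the composition-side quantity $\sum_q C_qD_q$ produced by Theorem \ref{Thm8}; your specialization $z=x$ collapsing the bivariate functional equation to \eqref{Eq5} confirms the consistency and reproves Theorem \ref{Thm9} as a byproduct. Only presentational points remain: keep the remark about the degenerate case $m=0$ (at $n=k=0$ the literal identity fails under standard conventions, so state the result for $0\le k\le n-1$, which is the range relevant to Theorem \ref{Thm8}); replace ``precisely when'' by ``if'' (sufficiency of the ansatz plus uniqueness of the solution is all you need, and the uniqueness induction deserves a sentence); and note that in the final binomial manipulation the cases $k\ge m$ are trivial since both sides vanish. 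None of these is a gap.
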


When combined with Theorem \ref{Thm8}, a proof of Conjecture \ref{Conj1} would yield an alternative proof of Theorem \ref{Thm9}. We know from the proof of Theorem \ref{Thm8} that \[|s^{-1}(\Av_{n,k}(231,312,321))|=\sum_{q\,\in\,\Comp_{k+1}(n-k)}C_qD_q,\] so Conjecture \ref{Conj1} is equivalent to the identity \[|s^{-1}(\Av_{n,k}(231,312,321))|=\frac{1}{n+1}{n-k-1\choose k}{2n-2k\choose n}.\]

\section{$s^{-1}(\Av(132,231,312))$}\label{Sec7} 

It turns out that \[s^{-1}(\Av(132,231,312))=\Av(2341,3412,1342,3142,34\underline{21},\underline{32}41).\]
In this subsection, we make use of the \emph{generalized Narayana numbers} \[N_k(n,r)=\frac{k+1}{n}{n\choose r+k}{n\choose r-1}.\] Note that the standard Narayana numbers are simply $N(n,r)=N_0(n,r)$.

Finding the fertilities of permutations in $\Av(132,231,312)$ is interesting because it proves the tightness of certain estimates that were used in \cite{Defant2} in order to obtain upper bounds for $W_3(n)$ and $W_4(n)$. More specifically, the following theorem is an immediate consequence of Corollary 3.1 and Lemma 4.1 in \cite{Defant2}. 

\begin{theorem}[\!\!\cite{Defant2}]\label{Thm14}
Suppose $\pi\in S_n$ has $k$ descents. We have \[|s^{-1}(\pi)|\leq\frac{2k+2}{n+1}{2n-2k-1\choose n}.\] Furthermore, the number of elements of $s^{-1}(\pi)$ with exactly $m$ descents is at most \[N_k(n-k,m-k+1)=\frac{k+1}{n-k}{n-k\choose m+1}{n-k\choose m-k}.\] 
\end{theorem}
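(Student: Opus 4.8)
The plan is to deduce both bounds from Theorems~\ref{Thm5} and~\ref{Thm7} by replacing the set of valid compositions with the (a~priori larger) set of \emph{all} compositions, after which what remains is a purely generating-function computation. Suppose $\pi\in S_n$ has $k$ descents. Every valid composition of $\pi$ is, by definition, a composition of $n-k$ into $k+1$ parts, so $\mathcal V(\pi)\subseteq\Comp_{k+1}(n-k)$. Since all the summands in Theorem~\ref{Thm5} and in the first formula of Theorem~\ref{Thm7} are nonnegative, this inclusion gives
\[
|s^{-1}(\pi)|=\sum_{q\in\mathcal V(\pi)}C_{q}\ \leq\ \sum_{q\in\Comp_{k+1}(n-k)}C_{q}
\]
and, for the number of $\sigma\in s^{-1}(\pi)$ with exactly $m$ descents,
\[
\sum_{(q_0,\ldots,q_k)\in\mathcal V(\pi)}\ \sum_{j_0+\cdots+j_k=m+1}\ \prod_{t=0}^{k}N(q_t,j_t)\ \leq\ \sum_{(q_0,\ldots,q_k)\in\Comp_{k+1}(n-k)}\ \sum_{j_0+\cdots+j_k=m+1}\ \prod_{t=0}^{k}N(q_t,j_t),
\]
where in both double sums the $j_t$ are positive integers. (When $\pi$ is not sorted every sum is empty and the claimed inequalities are trivial.) So it suffices to evaluate the two right-hand sides; these are statements about compositions with no further reference to $\pi$.

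For the refined bound, I would read the right-hand side as a coefficient of a power of the Narayana generating function: since $N(q_t,j_t)=[x^{q_t}y^{j_t-1}]F(x,y)$, multiplying over $t$ and summing over all compositions yields
\[
\sum_{(q_0,\ldots,q_k)\in\Comp_{k+1}(n-k)}\ \sum_{j_0+\cdots+j_k=m+1}\ \prod_{t=0}^{k}N(q_t,j_t)=[x^{n-k}y^{m-k}]\,F(x,y)^{k+1}.
\]
Squaring the radical in~\eqref{Eq2} and simplifying shows that $F=F(x,y)$ satisfies $F=x(1+F)(1+yF)$, i.e.\ $F=x\phi(F)$ with $\phi(u)=(1+u)(1+yu)$ and $\phi(0)=1\neq0$. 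Lagrange inversion then gives $[x^{N}]F^{k+1}=\frac{k+1}{N}[u^{N-k-1}](1+u)^{N}(1+yu)^{N}$; taking $N=n-k$ and extracting the coefficient of $y^{m-k}$ produces $\frac{k+1}{n-k}\binom{n-k}{m+1}\binom{n-k}{m-k}$, which is precisely $N_k(n-k,m-k+1)$.

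For the unrefined bound I would specialize $y=1$ (equivalently, sum the refined bound over $m$). From $F=x(1+F)(1+yF)$ one gets $F(x,1)=x(1+F(x,1))^{2}$, hence $F(x,1)=C(x)-1=xC(x)^{2}$ where $C(x)=\sum_{j\geq0}C_{j}x^{j}$; therefore
\[
\sum_{q\in\Comp_{k+1}(n-k)}C_{q}=[x^{n-k}]F(x,1)^{k+1}=[x^{n-2k-1}]C(x)^{2k+2},
\]
and the classical identity $[x^{N}]C(x)^{r}=\frac{r}{2N+r}\binom{2N+r}{N}$ evaluates this to $\frac{2k+2}{2n-2k}\binom{2n-2k}{n-2k-1}$, which a routine binomial manipulation rewrites as $\frac{2k+2}{n+1}\binom{2n-2k-1}{n}$. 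This completes the proof. I do not expect a genuine conceptual obstacle: the single substantive step is the passage from $\mathcal V(\pi)$ to all of $\Comp_{k+1}(n-k)$, which is immediate and is exactly where an inequality rather than an identity appears; the remaining work — extracting the functional equation from~\eqref{Eq2}, applying Lagrange inversion with the correct index shifts, and the closing binomial identity — is careful bookkeeping rather than anything deep.
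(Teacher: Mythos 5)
Your proposal is correct and follows essentially the same route as the paper: Theorem \ref{Thm14} is quoted from \cite{Defant2} as an immediate consequence of the closed-form identities $\sum_{q\in\Comp_{k+1}(n-k)}C_q=\frac{2k+2}{n+1}\binom{2n-2k-1}{n}$ and $\sum_{q\in\Comp_{k+1}(n-k)}\sum_{j\in\Comp_{k+1}(m+1)}\prod_{t}N(q_t,j_t)=N_k(n-k,m-k+1)$ (Corollary 3.1 and Lemma 4.1 of the cited work, used verbatim in the proof of Theorem \ref{Thm10}), combined with the inclusion $\mathcal V(\pi)\subseteq\Comp_{k+1}(n-k)$ and the nonnegativity of the summands in Theorems \ref{Thm5} and \ref{Thm7}, which is precisely your first step. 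The only difference is that you make the argument self-contained by proving the two composition identities yourself via the functional equation $F=x(1+F)(1+yF)$, Lagrange inversion, $F(x,1)=xC(x)^2$, and $[x^N]C(x)^r=\frac{r}{2N+r}\binom{2N+r}{N}$; these computations, including the index shifts and the final rewriting of $\frac{2k+2}{2n-2k}\binom{2n-2k}{n-2k-1}$ as $\frac{2k+2}{n+1}\binom{2n-2k-1}{n}$, all check out.
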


It turns out that these estimates are sharp when $\pi\in\text{Av}_n(132,231,312)$. In fact, it is straightforward to check that the only permutation in $\text{Av}_n(132,231,312)$ with exactly $k$ descents is \[\theta_{n,k}=(k+1)k(k-1)\cdots 321(k+2)(k+3)\cdots n\] (this permutation is the sum of a decreasing permutation of length $k+1$ and an increasing permutation of length $n-k-1$). For example, $\theta_{7,2}=3214567$ is the only permutation of length $7$ that has $2$ descents and avoids the patterns $132,231,312$. 

\begin{theorem}\label{Thm10}
With notation as above, \[|s^{-1}(\theta_{n,k})|=\frac{2k+2}{n+1}{2n-2k-1\choose n}.\] Furthermore, the number of permutations in $s^{-1}(\theta_{n,k})$ with exactly $m$ descents is \[N_k(n-k,m-k+1).\] 
\end{theorem}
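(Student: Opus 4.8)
The strategy is to compute the valid compositions of $\theta_{n,k}$ explicitly using Theorem \ref{Thm4}, and then to feed the resulting set $\mathcal V(\theta_{n,k})$ into Theorem \ref{Thm5} (for the fertility) and the first half of Theorem \ref{Thm7} (for the descent-refined count). The permutation $\theta_{n,k}=(k+1)k\cdots 21(k+2)\cdots n$ has descents exactly at the indices $d_1=1,\ldots,d_k=k$, so a valid composition is a tuple $(q_0,\ldots,q_k)\in\Comp_{k+1}(n-k)$. I first construct the canonical valid hook configuration $\mathcal H^*$: reasoning as in the proof of Theorem \ref{Thm8}, the hook $H_i^*$ with southwest endpoint $(i,k+2-i)$ must take the lowest available northeast endpoint, which forces all of $H_k^*,\ldots,H_1^*$ to land on the increasing tail, giving $b_i^*=k+1+i$ for $1\le i\le k$ (so each of $H_1^*,\ldots,H_k^*$ has its northeast endpoint in a distinct ascending run of the tail). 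Hence $(q_0^*,\ldots,q_k^*)=(1,1,\ldots,1,n-2k)$ — note the large part is $q_0^*$ here since the blue-colored points are the tail — and one reads off $e_0=k+1$, $e_i=k+2-$ (something); I need to check the exact bookkeeping, but the upshot I expect is that $e_i-1 < m$ fails for most pairs, collapsing conditions (a) and (b).

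\textbf{Key steps.} (1) Pin down $\mathcal H^*$, the parameters $(b_i^*)$, $(q_i^*)$, $(e_i)$, $(\alpha_j)$ for $\theta_{n,k}$. (2) Substitute into Theorem \ref{Thm4}: I anticipate that condition (b) imposes no constraint (the range $m\le p\le e_m-2$ being empty or vacuous) and that condition (a) reduces to a single family of inequalities of the form $\sum_{j=0}^p q_j \ge p+1$ for $0\le p\le k-1$ together with $\sum_{j=0}^k q_j = n-k$; equivalently $q_0+\cdots+q_p \ge p+1$, i.e. each prefix is at least as long as the number of parts in it, which is automatic, OR a slightly stronger prefix bound. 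The precise inequality is what the computation must settle; I expect $\mathcal V(\theta_{n,k})$ to be the set of $(q_0,\ldots,q_k)\in\Comp_{k+1}(n-k)$ with $\sum_{j=0}^p q_j\ge p+1$ for all $p$ — which is in fact \emph{all} of $\Comp_{k+1}(n-k)$, since a composition of positive integers automatically has $\sum_{j=0}^p q_j\ge p+1$. (3) Therefore $\mathcal V(\theta_{n,k})=\Comp_{k+1}(n-k)$, and Theorem \ref{Thm5} gives
\[
|s^{-1}(\theta_{n,k})|=\sum_{(q_0,\ldots,q_k)\in\Comp_{k+1}(n-k)}\prod_{t=0}^k C_{q_t},
\]
which is the coefficient $[x^{n-k}]\,c(x)^{k+1}$ where $c(x)=\sum_{j\ge 1}C_j x^j = \dfrac{1-2x-\sqrt{1-4x}}{2x}$ is the Catalan generating function minus its constant term. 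A standard Lagrange-inversion / algebraic extraction then yields $\dfrac{2k+2}{n+1}\dbinom{2n-2k-1}{n}$; I would cite the known formula for powers of the Catalan series rather than redo it. (4) For the refined count, apply the first formula of Theorem \ref{Thm7} with $\mathcal V(\theta_{n,k})=\Comp_{k+1}(n-k)$: the number of preimages with $m$ descents is
\[
\sum_{(q_0,\ldots,q_k)\in\Comp_{k+1}(n-k)}\ \sum_{j_0+\cdots+j_k=m+1}\ \prod_{t=0}^k N(q_t,j_t).
\]
Swapping the order of summation, for fixed $(j_0,\ldots,j_k)$ the inner sum over compositions $q$ factors, and $\sum_{q_t\ge j_t} N(q_t,j_t)x^{q_t}$ is a known piece of the Narayana generating function $F(x,y)$ from \eqref{Eq2}; collecting everything gives $[x^{n-k}y^{m-k}]$ of an explicit algebraic function whose coefficient is $N_k(n-k,m-k+1)=\dfrac{k+1}{n-k}\dbinom{n-k}{m+1}\dbinom{n-k}{m-k}$. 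Again I would lean on the generating-function identities already in the paper (and the definition of the generalized Narayana numbers) rather than grind the extraction.

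\textbf{Main obstacle.} The delicate point is Step (2): correctly translating conditions (a) and (b) of Theorem \ref{Thm4} for this specific permutation and verifying that they collectively cut out \emph{all} of $\Comp_{k+1}(n-k)$ (or, if not, exactly identifying the constraint). This requires careful attention to the indexing of ascending runs and to which of $q_0$ versus $q_k$ carries the ``large'' block — a place where sign/orientation errors are easy. Once $\mathcal V(\theta_{n,k})$ is correctly identified as $\Comp_{k+1}(n-k)$, the remaining work is routine generating-function manipulation using \eqref{Eq2}, the Catalan series, Lagrange inversion, and the definition $N_k(n,r)=\frac{k+1}{n}\binom{n}{r+k}\binom{n}{r-1}$; I would present those extractions compactly and note that the sharpness claim (matching the upper bounds of Theorem \ref{Thm14}) is then immediate by comparing formulas.
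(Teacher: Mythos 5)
Your plan is the same as the paper's: use Theorem \ref{Thm4} to show $\mathcal V(\theta_{n,k})=\Comp_{k+1}(n-k)$, then feed this into Theorems \ref{Thm5} and \ref{Thm7}, and finish with the closed-form evaluations of $\sum_{q\in\Comp_{k+1}(n-k)}C_q$ and of the double Narayana sum. (The paper does not re-derive those two sums by Lagrange inversion; it cites them directly as Corollary 3.1 and Lemma 4.1 of the reference \cite{Defant}, which is also where the bounds in Theorem \ref{Thm14} come from. Your generating-function route for them is fine but is re-proving known identities.)

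The part you flagged as delicate is indeed where your written details go wrong, and it matters. For $\theta_{n,k}$ the canonical construction chooses northeast endpoints for $H_k^*,H_{k-1}^*,\ldots,H_1^*$ in that order, giving $b_i^*=2k+2-i$ (so $H_k^*$ is the innermost hook, ending at $(k+2,k+2)$, and $H_1^*$ the outermost, ending at $(2k+1,2k+1)$), not $b_i^*=k+1+i$: with your assignment the vertical part of $H_2^*$ would cross the horizontal part of $H_1^*$, violating Condition 3 of Definition \ref{Def2}. The induced composition is $(q_0^*,\ldots,q_k^*)=(n-2k,1,\ldots,1)$ --- the large part sits in $q_0^*$, consistent with your parenthetical remark but not with the tuple $(1,\ldots,1,n-2k)$ you wrote; if the large part really were $q_k^*$, condition (a) at $m=k$ would read $q_k\geq n-2k$, a genuine restriction that would destroy the conclusion $\mathcal V(\theta_{n,k})=\Comp_{k+1}(n-k)$. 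With the correct data one gets $e_i=k+1$ for all $i$, $\alpha_1=\cdots=\alpha_k=0$, $\alpha_{k+1}=k$; condition (a) then asks that each suffix sum $\sum_{j=m}^{k}q_j$ be at least the number of its terms (automatic), and condition (b), whose range $m\leq p\leq k-1$ is nonempty rather than vacuous, asks $\sum_{j=m}^p q_j\geq p-m+1$ (also automatic). So your anticipated conclusion is right, but the proof is incomplete until this bookkeeping is actually carried out correctly; as written, the stated $b_i^*$ and $q^*$ would not support it.
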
 
\begin{proof}
As in Section \ref{Sec6}, let $\Comp_a(b)$ denote the set of compositions of $b$ into $a$ parts. Corollary 3.1 in \cite{Defant} states that \[\sum_{(q_0,\ldots,q_k)\in\Comp_{k+1}(n-k)}C_{(q_0,\ldots,q_k)}=\frac{2k+2}{n+1}{2n-2k-1\choose n}.\] Lemma 4.1 in that same paper tells us that \[\sum_{(q_0,\ldots,q_k)\in\Comp_{k+1}(n-k)}\,\,\sum_{(j_0,\ldots,j_k)\in\Comp_{k+1}(m+1)}\,\,\prod_{t=0}^kN(q_t,j_t)=N_k(n-k,m-k+1).\] Invoking Theorems \ref{Thm5} and \ref{Thm7}, we see that is suffices to show that $\mathcal V(\theta_{n,k})=\Comp_{k+1}(n-k)$. We know that $\mathcal V(\theta_{n,k})\subseteq\Comp_{k+1}(n-k)$, so it remains to prove the reverse containment. This is actually quite simple to do if we apply Theorem \ref{Thm4} with the permutation $\theta_{n,k}$ in place of $\pi$. Preserve the notation from that theorem and the discussion immediately preceding it. 

First, we have $d_i=i$ for $0\leq i\leq k$. It is straightforward to compute the canonical valid hook configuration $\mathcal H^*$ of $\theta_{n,k}$. Namely, $\mathcal H^*=(H_1^*,\ldots,H_k^*)$, where $H_i^*$ is the hook with southwest endpoint $(i,k-i+2)$ and northeast endpoint $(2k-i+2,2k-i+2)$. The valid composition induced by $\mathcal H^*$ is \[(q_0^*,\ldots,q_k^*)=(n-2k,1,\ldots,1).\] We have $e_i=k+1$ for all $i\in\{0,\ldots,k\}$. Finally, $\alpha_i=0$ for $i\in\{1,\ldots,k\}$, and $\alpha_{k+1}=k$. 

Choose $(q_0,\ldots,q_k)\in\Comp_{k+1}(n-k)$. If $m\in\{1,\ldots,k\}$, then \[\sum_{j=m}^{e_m-1}q_j\geq\sum_{j=m}^{e_m-1}1=\sum_{j=m}^{e_m-1}q_j^*.\] If $m=0$, then \[\sum_{j=m}^{e_m-1}q_j=n-k=\sum_{j=m}^{e_m-1}q_j^*.\] Hence, $(q_0,\ldots,q_k)$ satisfies condition $(a)$ in Theorem \ref{Thm4}. If $m,p\in\{0,1,\ldots,k\}$ and $m\leq p\leq e_m-2=k-1$, then \[\sum_{j=m}^p q_j\geq p-m+1=d_{p+1}-d_m-\sum_{j=m+1}^{p+1}\alpha_j.\] This shows that $(q_0,\ldots,q_k)$ satisfies condition $(b)$ in Theorem \ref{Thm4}, so it follows from that theorem that $(q_0,\ldots,q_k)\in\mathcal V(\theta_{n,k})$. 
\end{proof}

In the proof of Theorem \ref{Thm10}, we showed that $\mathcal V(\theta_{n,k})=\Comp_{k+1}(n-k)$. According to Theorem \ref{Thm7}, the number of permutations in $s^{-1}(\theta_{n,k})$ with $m$ peaks is \[\sum_{(q_0,\ldots,q_k)\in\Comp_{k+1}(n-k)}\sum_{(j_0,\ldots,j_k)\in\Comp_{k+1}(m+1)}\prod_{t=0}^k V(q_t,j_t).\] We leave it as an open problem to find a simple closed-form expression for these numbers. 

The following theorem gives a new combinatorial interpretation for the Fine numbers $F_n$, which are defined by \[\sum_{n\geq 0}F_nx^n=\frac{1}{x}\frac{1-\sqrt{1-4x}}{3-\sqrt{1-4x}}.\] The sequence of Fine numbers, which is sequence A000957 in the Online Encyclopedia of Integer Sequences \cite{OEIS}, has many combinatorial connections with Catalan numbers. See the survey \cite{Deutsch} for more on this ubiquitous sequence. The following theorem also involves two reFinements of the Fine numbers. These are the numbers \[g_{n,m}=\sum_{k=0}^{\left\lfloor\frac{n-1}{2}\right\rfloor}N_k(n-k,m-k+1)\] and \[h_{n,m}=\frac{2^{n-2m-1}}{n+2}{n+2\choose m+1}{n-m-1\choose m}.\] These numbers, which have combinatorial interpretations in terms of Dyck paths, appear as sequences A100754 and A114593 in \cite{OEIS}. It is known that\footnote{These identities are stated without proof in the Online Encyclopedia of Integer Sequences, but they can be proven by standard (yet somewhat tedious) arguments involving generating functions.} 
\begin{equation}\label{Eq8}
\sum_{m=0}^{n-1}g_{n,m}=\sum_{m=0}^{\left\lfloor\frac{n-1}{2}\right\rfloor}h_{n,m}=F_{n+1}.
\end{equation}

\begin{theorem}\label{Thm11}
In the notation of the preceding paragraph, we have \[|s^{-1}(\Av_n(132,231,312))|=F_{n+1}.\] Moreover, the number of permutation in $s^{-1}(\Av_n(132,231,312))$ with exactly $m$ descents is $g_{n,m}$. The number of permutation in $s^{-1}(\Av_n(132,231,312))$ with exactly $m$ peaks is $h_{n,m}$.
\end{theorem}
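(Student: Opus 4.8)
The key observation, already used implicitly above, is that $\Av_n(132,231,312) = \{\theta_{n,0},\theta_{n,1},\ldots,\theta_{n,\lfloor (n-1)/2\rfloor}\}$, since $\theta_{n,k}$ requires $k+1 \le n-k$, i.e. $k \le \lfloor (n-1)/2\rfloor$ (for larger $k$ the permutation $\theta_{n,k}$ either is not defined or is not sorted). Therefore
\[
|s^{-1}(\Av_n(132,231,312))| = \sum_{k=0}^{\lfloor (n-1)/2\rfloor} |s^{-1}(\theta_{n,k})|,
\]
and similarly the number of elements with $m$ descents (resp.\ $m$ peaks) is the sum over $k$ of the corresponding count for $\theta_{n,k}$. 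So everything reduces to Theorem~\ref{Thm10} (for descents and the total count) together with the peak formula $\sum_{(q_0,\ldots,q_k)\in\Comp_{k+1}(n-k)}\sum_{(j_0,\ldots,j_k)\in\Comp_{k+1}(m+1)}\prod_{t=0}^k V(q_t,j_t)$ obtained from $\mathcal V(\theta_{n,k})=\Comp_{k+1}(n-k)$ via Theorem~\ref{Thm7}.

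With this reduction in hand, the three statements become pure identities about known sequences. First I would invoke Theorem~\ref{Thm10}: the number of elements of $s^{-1}(\theta_{n,k})$ with exactly $m$ descents is $N_k(n-k,m-k+1)$, so the number of elements of $s^{-1}(\Av_n(132,231,312))$ with $m$ descents is $\sum_{k=0}^{\lfloor (n-1)/2\rfloor} N_k(n-k,m-k+1) = g_{n,m}$ by the very definition of $g_{n,m}$. Summing over $m$ and using the first identity in \eqref{Eq8} gives $|s^{-1}(\Av_n(132,231,312))| = \sum_{m=0}^{n-1} g_{n,m} = F_{n+1}$. That disposes of the first two claims essentially for free.

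For the peak statement, I would first give a closed form for the number $p_{n,k}(m)$ of elements of $s^{-1}(\theta_{n,k})$ with $m$ peaks. Using the generating function $G(x,y)$ from \eqref{Eq4}, the sum $\sum_{(q_0,\ldots,q_k)\in\Comp_{k+1}(n-k)}\sum_{(j_0,\ldots,j_k)\in\Comp_{k+1}(m+1)}\prod_{t=0}^k V(q_t,j_t)$ is exactly $[x^{n-k}y^{m-k}]\,G(x,y)^{k+1}$ (each factor contributes a part $q_t\ge 1$ with $j_t\ge 1$, shifting both indices by $k+1$, but the $y$-exponent in $G$ is already $j_t-1$, giving total shift $m+1-(k+1)=m-k$). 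Then the number of elements of $s^{-1}(\Av_n(132,231,312))$ with $m$ peaks is $[x^n y^m]\sum_{k\ge 0} x^k y^k G(x,y)^{k+1} = [x^n y^m]\,\dfrac{G(x,y)}{1-xy\,G(x,y)}$. Plugging in \eqref{Eq4} and simplifying the resulting algebraic function, I expect to obtain a clean expression (analogous to the $\dfrac{x}{\sqrt{\cdots}}$ forms appearing in the proof of Theorem~\ref{Thm16}) from which standard coefficient extraction yields $h_{n,m} = \dfrac{2^{n-2m-1}}{n+2}\binom{n+2}{m+1}\binom{n-m-1}{m}$. Summing over $m$ and using the second equality in \eqref{Eq8} re-confirms the total $F_{n+1}$.

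The main obstacle is the final step: verifying that $[x^n y^m]\,\dfrac{G(x,y)}{1-xy\,G(x,y)}$ really equals $h_{n,m}$. This is a concrete but somewhat delicate generating-function computation — one must simplify the nested radical in $\dfrac{G}{1-xyG}$ and then recognize the diagonal coefficients. An alternative, possibly cleaner, route is to bypass $h_{n,m}$ as a closed form and instead show directly that $\sum_{k} [x^{n-k}y^{m-k}]G(x,y)^{k+1}$ matches the known generating function for sequence A114593 in \cite{OEIS}, then cite \eqref{Eq8}; either way the computation is routine in principle but is the one place where care is needed. Everything else follows immediately from the classification of $\Av_n(132,231,312)$ and from Theorems~\ref{Thm5}, \ref{Thm7}, and \ref{Thm10}.
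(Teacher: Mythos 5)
Your proposal is correct and takes essentially the same route as the paper: reduce to the permutations $\theta_{n,k}$, get the descent count $g_{n,m}$ from Theorem \ref{Thm10} and the total $F_{n+1}$ from \eqref{Eq8}, and obtain the peak count by summing $x^ky^k\,G(x,y)^{k+1}$ to $\dfrac{G(x,y)}{1-xy\,G(x,y)}$ and matching it with the generating function of the $h_{n,m}$ (the paper likewise dispatches this last step as a straightforward algebraic simplification). One small correction: $\Av_n(132,231,312)$ contains $\theta_{n,k}$ for \emph{all} $k\leq n-1$, not just $k\leq\lfloor (n-1)/2\rfloor$; the restriction of the sum is justified, as you indicate parenthetically, by the fact that $\theta_{n,k}$ is not sorted (has empty preimage) when $k>\lfloor (n-1)/2\rfloor$.
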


\begin{proof}
Recall that $\Av_n(132,231,312)=\{\theta_{n,0},\theta_{n,1},\ldots,\theta_{n,n-1}\}$. It follows from Theorem \ref{Thm10} that $s^{-1}(\theta_{n,k})$ is empty if $k>\left\lfloor\frac{n-1}{2}\right\rfloor$. It now follows immediately from Theorem \ref{Thm5} that the number of permutations in $s^{-1}(\Av_n(132,231,312))$ with exactly $m$ descents is $g_{n,m}$. Along with \eqref{Eq8}, this implies that $|s^{-1}(\Av_n(132,231,312))|=F_{n+1}$.

Recall the generating function $G(x,y)$ from \eqref{Eq4}. According to the preceding paragraph and Theorem \ref{Thm7}, the number of permutations in $s^{-1}(\theta_{n,k})$ with $m$ peaks is \[\sum_{(q_0,\ldots,q_k)\in\Comp_{k+1}(n-k)}\,\,\sum_{(j_0,\ldots,j_k)\in\Comp_{k+1}(m+1)}\,\,\prod_{t=0}^kV(q_t,j_t).\] This is nothing more than the coefficient of $x^ny^m$ in $x^ky^k\,G(x,y)^{k+1}$. Consequently, the number of permutations in $s^{-1}(\Av_n(132,231,312))$ with $m$ peaks is \[[x^ny^m]\left(\sum_{k=0}^{n-1}x^ky^k\,G(x,y)^{k+1}\right)=[x^ny^m]\left(\sum_{k=0}^\infty x^ky^k\,G(x,y)^{k+1}\right)=[x^ny^m]\left(\frac{G(x,y)}{1-xy\,G(x,y)}\right).\]
Straightforward algebraic manipulations show that \[\frac{G(x,y)}{1-xy\,G(x,y)}=\frac{1-2x-\sqrt{1-4x+4x^2-4x^2y}}{xy(1+2x+\sqrt{1-4x+4x^2-4x^2y})}.\] This last expression is equal to the generating function $\displaystyle\sum_{n\geq 1}\sum_{m=0}^{\left\lfloor\frac{n-1}{2}\right\rfloor}h(n,m)x^ny^m$.  
\end{proof}

\section{$s^{-1}(\Av(312,321))$}\label{Sec:312,321}
Applying the algorithm of Claesson and \'Ulfarsson shows that $s^{-1}(\Av(312,321))$ is equal to the set of permutations avoiding certain mesh patterns. However, Chetak Hossain has drawn the current author's attention to the fact that this set is actually a permutation class. Specifically, $s^{-1}(\Av(312,321))=\Av(3412,3421)$. We leave the proof of this statement to the reader; it requires nothing more than the definition of the stack-sorting map and the definition of permutation pattern avoidance. Hossain has also pointed the current author to the paper \cite{Kremer}. In this paper, Kremer proves that $|\Av_n(3412,3421)|$ is the $(n-1)^\text{th}$ large Schr\"oder number. In other words, we have the following theorem.\footnote{Kremer did not mention the stack-sorting map in her theorem.} 

\begin{theorem}[\!\!\cite{Kremer}]\label{Thm17}
We have \[\sum_{n\geq 1}|s^{-1}(\Av_n(312,321))|x^n=\sum_{n\geq 1}|\Av_n(3412,3421)|x^n=\frac{1-x-\sqrt{1-6x+x^2}}{2}.\] 
\end{theorem}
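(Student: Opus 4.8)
The plan is to prove the statement in two stages: first the set identity $s^{-1}(\Av(312,321))=\Av(3412,3421)$, and then the enumeration of the class $\Av(3412,3421)$ by shifted large Schr\"oder numbers, which is Kremer's theorem \cite{Kremer}.

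For the set identity I would induct on $n$ using West's recursion $s(\pi)=s(L)s(R)\,n$ for $\pi=LnR$. The key initial observation is that, since $n$ is simultaneously the largest entry of $s(\pi)$ and its last entry, $n$ can play neither the role of the ``$2$'' in an occurrence of $312$ nor the role of the ``$1$'' in an occurrence of $321$; hence $s(\pi)$ avoids $\{312,321\}$ if and only if $s(L)s(R)$ does. Now a permutation avoids both $312$ and $321$ exactly when every entry has at most one smaller entry to its right, so one checks that $s(L)s(R)\in\Av(312,321)$ precisely when $s(L),s(R)\in\Av(312,321)$ together with one bounded interaction condition between the two blocks: no entry of $s(L)$ may have a smaller entry inside $s(L)$ \emph{and} an entry of $s(R)$ below it in value. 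Pushing these conditions back through the recursion and comparing with the recursive structure of $\Av(3412,3421)$ yields the claimed equality by induction. The bookkeeping is elementary --- this is the ``exercise'' mentioned in the text --- but one must carefully enumerate every way a copy of $3412$ or $3421$ can straddle the blocks $L$, $n$, and $R$, and this case analysis is the fiddliest part of the whole argument.

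For the enumeration, the cleanest route is a generating-tree argument of the kind Kremer uses. To each $\pi\in\Av_n(3412,3421)$ one attaches its set of \emph{active sites}, the positions at which a new largest entry $n+1$ can be inserted without creating $3412$ or $3421$, and one shows that the number of active sites evolves under such insertions according to a succession rule isomorphic to the classical large Schr\"oder rule. That rule is generated by the functional equation $S(x)=x+xS(x)+S(x)^2$, whose solution is $S(x)=\dfrac{1-x-\sqrt{1-6x+x^2}}{2}$, so one obtains $\sum_{n\ge 1}|\Av_n(3412,3421)|x^n=S(x)$ as desired. Alternatively, since $3412$ and $3421$ share the opening pattern $34$ and differ only in their last two entries, one can decompose a class member around the first place where it ``opens a $34$'', obtain a direct algebraic decomposition of the generating function, and solve; this reproduces the same $S(x)$.

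The main obstacle is the first stage: carrying out the straddling-pattern analysis without it ballooning into an unwieldy case check. A more conceptual organization uses that $s$ is the postorder traversal of the decreasing binary plane tree $T(\pi)$ of $\pi$ (see \cite{Bona,Defant,Defant2}); then ``$s(\pi)\in\Av(312,321)$'' becomes a local structural condition on $T(\pi)$, which one then re-reads as the pattern condition $\pi\in\Av(3412,3421)$. Identifying the correct local tree condition is the real content. The second stage is routine once the succession rule (or the block decomposition) is in hand, and in any event it is already recorded in the literature as \cite{Kremer}, so it may simply be quoted.
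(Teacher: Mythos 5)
Your overall route is the same as the paper's: the paper proves this theorem by (i) asserting the set identity $s^{-1}(\Av(312,321))=\Av(3412,3421)$, whose verification it explicitly leaves to the reader as an exercise in the definitions, and (ii) quoting Kremer's enumeration of $\Av(3412,3421)$ by large Schr\"oder numbers. Your stage 2 is exactly (ii) (and your description of Kremer's generating-tree/succession-rule method and of the functional equation $S(x)=x+xS(x)+S(x)^2$ is accurate), and your stage 1 is an attempt to actually carry out the exercise in (i) via West's recursion $s(\pi)=s(L)s(R)n$, which is a reasonable way to organize it.

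There is, however, a concrete flaw in the one step of stage 1 that you do commit to. Your characterization of when $s(L)s(R)\in\Av(312,321)$ is incomplete: besides $s(L),s(R)\in\Av(312,321)$ and your condition (no entry of $s(L)$ with a smaller entry to its right inside $s(L)$ \emph{and} an entry of $s(R)$ below it), one must also forbid an entry of $s(L)$ that exceeds \emph{two or more} entries of $s(R)$ even when nothing smaller follows it inside $s(L)$. For example, with $\pi=3512$ one has $s(L)=3$ and $s(R)=12$; your condition is vacuously satisfied, yet $s(L)s(R)=312$ visibly contains $312$ (consistently, $\pi$ contains $3412$). The correct local statement, coming from the criterion ``every entry has at most one smaller entry to its right,'' is that for each entry $x$ of $s(L)$ the number of smaller entries to its right within $s(L)$ plus the number of entries of $s(R)$ below $x$ is at most one. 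Beyond this, the genuinely substantive part of stage 1 --- translating conditions on the sorted blocks $s(L),s(R)$ back into the pattern condition $\pi\in\Av(3412,3421)$ on the unsorted permutation --- is only promised, not performed; since the paper also omits this verification, your proposal is at the paper's level of detail there, but the interaction condition you did write down would have to be repaired before the induction could go through.
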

 
\section{$s^{-1}(\Av(132,321))$}\label{Sec:132,321} 

We have \[s^{-1}(\Av(132,321))=\Av(1342, 34251, 35241, 45231, \underline{31}42).\] 
It appears as though the sequence enumerating these permutations has not been studied before, but we will see that its generating function is fairly simple. Let $C(x)=\sum_{n\geq 0}C_nx^n=\dfrac{1-\sqrt{1-4x}}{2x}$ be the generating function of the sequence of Catalan numbers. Recall the generating functions $F(x,y)$ and $G(x,y)$ from \eqref{Eq2} and \eqref{Eq4}. Let $\mathfrak a(n,m)$ denote the number of elements of $s^{-1}(\Av_n(132,321))$ with exactly $m$ descents, and let $\mathfrak b(n,m)$ denote the number of elements of $s^{-1}(\Av_n(132,321))$ with exactly $m$ peaks.  

\begin{theorem}
In the notation of the preceding paragraph, we have \[\sum_{n\geq 1}|s^{-1}(\Av_n(132,321))|\,x^n=C(x)-1+x^3(C'(x))^2.\] Furthermore, \begin{equation}\label{Eq12}
\sum_{n\geq 1}\sum_{m\geq 0}\mathfrak a(n,m)x^ny^m=F(x,y)+x^3y\left(\frac{\partial}{\partial x}F(x,y)\right)^2,
\end{equation} and 
\begin{equation}\label{Eq13}
\sum_{n\geq 1}\sum_{m\geq 0}\mathfrak b(n,m)x^ny^m=G(x,y)+x^3y\left(\frac{\partial}{\partial x}G(x,y)\right)^2.
\end{equation}
\end{theorem}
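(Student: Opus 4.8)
The plan is to determine the structure of $\Av(132,321)$, describe the valid compositions of each permutation in $\Av_n(132,321)$ via the canonical valid hook configuration machinery of Theorem \ref{Thm4}, and then repackage the resulting sums using Theorems \ref{Thm5} and \ref{Thm7} into generating-function form. First I would note that a permutation avoiding $321$ can be written as a merge of two increasing sequences, and adding the restriction of avoiding $132$ pins the structure down tightly: I expect that $\Av_n(132,321)$ consists of the identity $12\cdots n$ together with permutations that, up to a short initial segment, look like $\gamma_{n,\ell}$-type permutations, but in fact the cleanest description should be that each non-identity element of $\Av_n(132,321)$ is built from at most two ascending runs, so that it has at most one descent --- no, more carefully: a $321$- and $132$-avoiding permutation with descents $d_1<\cdots<d_k$ must have each descent bottom equal to a small ``dropped'' value, and the precise enumeration $C(x)-1+x^3(C'(x))^2$ tells us what to expect. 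The term $C(x)-1$ will come from the permutations with at most one descent (the ones analogous to $\gamma_{n,\ell}$ and the identity, contributing Catalan-type counts), and the term $x^3(C'(x))^2 = \left(\sum_{i\ge 1} i x^i\right)\!\left(\sum_{j\ge 1} j x^j\right) x$-type convolution will come from permutations with exactly two descents whose two valid composition ``slots'' each range over an interval, giving a double convolution of the form $\sum_{i,j} ij\, C_i C_j$ after applying Theorem \ref{Thm5}.

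So the key steps, in order, are: (1) classify $\Av_n(132,321)$ explicitly --- show every such permutation either equals $12\cdots n$, or is obtained from one of the West-type permutations with one descent, or is one of a two-parameter family with exactly two descents; count how many there are with a given number of descents. (2) For each such $\pi$, compute the canonical valid hook configuration $\mathcal H^*$ and the associated data $(q_i^*)$, $(e_i)$, $(\alpha_j)$, $(d_i)$, exactly as in the proofs of Theorems \ref{Thm8} and \ref{Thm10}. I expect that for the one-descent permutations $\mathcal V(\pi)$ is a single interval of compositions of $n-k$ into $k+1=2$ parts (giving, via Theorem \ref{Thm5}, a sum $\sum C_i C_{n-1-i}$ truncated appropriately, whose generating function is built from $C(x)$), while for the two-descent permutations $\mathcal V(\pi)$ is described by condition (b) as a product of two independent intervals, giving $\sum_{i} i C_i \cdot \sum_j j C_j$ weighting --- precisely the $x^3 (C'(x))^2$ factor, since $\sum_{i\ge 1} i C_i x^i = x C'(x)$. (3) Sum over all $\pi\in\Av_n(132,321)$ and package into generating functions, obtaining the first identity. (4) Redo steps (2)--(3) with Theorem \ref{Thm7} in place of Theorem \ref{Thm5}: since $C(x) = F(x,1)$ and $C'(x)$ corresponds to $\partial_x F(x,1)$, and since the descent-refined Theorem \ref{Thm7} replaces each $C_{q_t}$ by $\sum_{j_t} N(q_t,j_t) y^{j_t-1}$, the same structural decomposition (a single interval, resp. a product of two independent intervals) yields $F(x,y) + x^3 y (\partial_x F(x,y))^2$; the peak version \eqref{Eq13} follows identically with $V$, $G$ in place of $N$, $F$, using Theorem \ref{Thm7}'s peak statement and \eqref{Eq4}.

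The main obstacle I anticipate is step (1) together with the bookkeeping in step (2): one must be sure that the classification of $\Av_n(132,321)$ is complete and that the canonical valid hook configuration really does have the simple form claimed, so that condition (b) of Theorem \ref{Thm4} degenerates (for each $\pi$) into either a single lower-bound constraint on a partial sum or two mutually independent such constraints, with no cross terms. If there were permutations whose valid-composition polytope did not split as a product of intervals, the generating function would not factor as a clean square, so verifying this decoupling --- essentially checking that the relevant $e_i$'s, $\alpha_j$'s and the ranges $m\le p\le e_m-2$ in condition (b) make the constraints separable --- is the crux. Once that structural fact is in hand, the rest is the routine translation of ``composition lies in a product of two intervals, weighted by $\prod C_{q_t}$ (resp.\ $\prod N(q_t,j_t)$, $\prod V(q_t,j_t)$)'' into the stated closed forms, using only $\sum_{i\ge 1} i C_i x^i = x C'(x)$ and the known formulas \eqref{Eq2}, \eqref{Eq4} for $F$ and $G$; and the descent and peak statements then follow by the same argument with $y$ carried along, since setting $y=1$ recovers the unrefined identity.
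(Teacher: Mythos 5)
Your high-level plan (classify the class, determine $\mathcal V(\pi)$ for each member via the valid hook configuration machinery, then apply Theorems \ref{Thm5} and \ref{Thm7} and repackage as generating functions) is the same as the paper's, but the structural guesses on which your plan hinges are wrong, and they are exactly the point you flagged as the crux. The class $\Av(132,321)$ contains no permutation with two descents: if $d_1<d_2$ are descents and $\pi_{d_1}>\pi_{d_2}$, then $\pi_{d_1}\pi_{d_2}\pi_{d_2+1}$ is a $321$; if $\pi_{d_1}<\pi_{d_2}$ and $\pi_{d_2+1}>\pi_{d_1}$, then $\pi_{d_1}\pi_{d_2}\pi_{d_2+1}$ is a $132$; and if $\pi_{d_2+1}<\pi_{d_1}$, then either $\pi_{d_1+1}>\pi_{d_2+1}$, giving the $321$ pattern $\pi_{d_1}\pi_{d_1+1}\pi_{d_2+1}$, or $\pi_{d_1+1}<\pi_{d_2+1}$, giving the $132$ pattern $\pi_{d_1+1}\pi_{d_2}\pi_{d_2+1}$. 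So the two-descent, two-parameter family that you propose as the source of $x^3y\left(\frac{\partial}{\partial x}F(x,y)\right)^2$ (a product of two independent intervals of compositions) does not exist, and your attribution of $C(x)-1$ to ``the permutations with at most one descent'' is likewise off. The correct accounting is: the members of $\Av_n(132,321)$ with positive fertility are the identity together with the one-descent permutations $\delta_{h,i,t}=(h+1)\cdots(h+i)\,1\,2\cdots h\,(h+i+1)\cdots n$ with $h,i,t\geq 1$, $h+i+t=n$ (the remaining members, such as $231$, $312$, $3412$, do not end in $n$ and hence contribute nothing). The identity alone accounts for the terms $F(x,y)$, $G(x,y)$, $C(x)-1$. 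Each $\delta_{h,i,t}$ has $\mathcal V(\delta_{h,i,t})=\{(i+t-\ell,\,h+\ell-1):1\leq\ell\leq t\}$, a single interval of compositions of $n-1$ into two parts; the squared term appears only after summing over the whole family, because the composition $(r,n-1-r)$ occurs with total multiplicity $r(n-1-r)$ among all pairs $(\delta_{h,i,t},\ell)$, which yields $\sum_{r}r(n-1-r)N(r,j)N(n-1-r,m+1-j)$, i.e.\ exactly the coefficient of $x^ny^m$ in $x^3y\left(\frac{\partial}{\partial x}F(x,y)\right)^2$.

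If you repair step (1) in this way — every member has at most one descent, the identity gives the first term, and the multiplicity count over $(h,i,t,\ell)$ supplies the factor $r(n-1-r)$ — then the remainder of your outline (carrying $y$ along via Theorem \ref{Thm7}, using $\sum_{r\geq 1}rC_rx^r=xC'(x)$ and its $N$- and $V$-analogues, and invoking \eqref{Eq2} and \eqref{Eq4}) goes through exactly as you describe and coincides with the paper's proof.
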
 

\begin{proof}
We will prove \eqref{Eq12}; the proof of \eqref{Eq13} is similar. In addition, the first statement of the theorem follows from \eqref{Eq12} and the fact that $F(x,1)=C(x)-1$. 

For $h,i,t\geq 1$, let \[\delta_{h,i,t}=(h+1)(h+2)\cdots(h+i)12\cdots h(h+i+1)(h+i+2)\cdots(h+i+t)\in S_{h+i+t}.\] For example, $\delta_{1,3,2}=234156$. It is straightforward to check that \[\Av_n(132,321)=\{123\cdots n\}\cup\{\delta_{h,i,t}:h,i,t\geq 1,\,\,h+i+t=n\}.\] Moreover, the set of valid compositions of $\delta_{h,i,t}$ is \[\mathcal V(\delta_{h,i,t})=\{(i+j-\ell,h+\ell-1):1\leq \ell\leq t\}.\] Invoking Theorem \ref{Thm7}, we find that \[\mathfrak a(n,m)=N(n,m+1)+\sum_{\substack{h,i,t\geq 1\\ h+i+t=n}}\sum_{\ell=1}^t\sum_{j=1}^mN(i+t-\ell,j)N(h+\ell-1,m+1-j)\] \[=N(n,m+1)+\sum_{h=1}^{n-2}\sum_{i=1}^{n-h}\sum_{\ell=1}^{n-h-i}\sum_{j=1}^mN(n-h-\ell,j)N(h+\ell-1,m+1-j)\] \[=N(n,m+1)+\sum_{h=1}^{n-2}\sum_{\ell=1}^{n-h}\sum_{i=1}^{n-h-\ell}\sum_{j=1}^mN(n-h-\ell,j)N(h+\ell-1,m+1-j)\] \[=N(n,m+1)+\sum_{h=1}^{n-2}\sum_{\ell=1}^{n-h}\sum_{j=1}^m(n-h-\ell)N(n-h-\ell,j)N(h+\ell-1,m+1-j).\] The substitution $r=n-h-\ell$ gives \[\mathfrak a(n,m)=N(n,m+1)+\sum_{h=1}^{n-2}\sum_{r=1}^{n-h-1}\sum_{j=1}^mr\,N(r,j)N(n-r-1,m+1-j)\] \[=N(n,m+1)+\sum_{r=1}^{n-2}\sum_{j=1}^mr(n-r-1)\,N(r,j)N(n-r-1,m+1-j).\] It is now routine to verify that this last expression is the coefficient of $x^ny^m$ in \[F(x,y)+x^3y\left(\frac{\partial}{\partial x}F(x,y)\right)^2.\qedhere\] 
\end{proof}

\section{$s^{-1}(\Av(132,312))$ and $s^{-1}(\Av(231,312))$}\label{Sec:Pair}

We have \[s^{-1}(\Av(132,312))=\Av(1342,3142,3412,34\underline{21})\] and \[s^{-1}(\Av(231,312))=\Av(2341,3412,34\underline{21},3\overline{5}241).\] 
From these descriptions of these sets, there is no obvious reason to expect that $|s^{-1}(\Av_n(132,312))|$ 
$=|s^{-1}(\Av_n(231,312))|$. However, this is indeed the case; valid hook configurations make the proof quite painless.  

\begin{theorem}\label{Thm12}
For all positive integers $n$, we have \[|s^{-1}(\Av_n(132,312))|=|s^{-1}(\Av_n(231,312))|.\] In fact, the number of permutations in $s^{-1}(\Av_n(132,312))$ with $m$ descents is the same as the number of permutations in $s^{-1}(\Av_n(231,312))$ with $m$ descents. Moreover, the number of permutations in $s^{-1}(\Av_n(132,312))$ with $m$ peaks is the same as the number of permutations in $s^{-1}(\Av_n(231,312))$ with $m$ peaks.
\end{theorem}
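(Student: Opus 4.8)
The plan is to use the theory of valid hook configurations, exactly as in the preceding sections. The crucial observation is that the expressions in Theorems \ref{Thm5} and \ref{Thm7} for the fertility of a permutation $\pi$ — and for the refinements of the fertility by number of descents and by number of peaks — depend on $\pi$ only through the multiset $\mathcal V(\pi)$ of valid compositions of $\pi$; indeed, for a fixed valid composition the inner sum $\sum_{j_0+\cdots+j_k=m+1}\prod_t N(q_t,j_t)$ (and likewise with $V$ in place of $N$) is symmetric in $q_0,\dots,q_k$, so only the multiset of parts matters. Since $s^{-1}$ of a set is the disjoint union of the preimages of its elements, and since a valid composition into $k+1$ parts records that its underlying permutation has $k$ descents, it therefore suffices to prove the single multiset identity
\[
\biguplus_{\pi\in\Av_n(132,312)}\mathcal V(\pi)=\biguplus_{\pi\in\Av_n(231,312)}\mathcal V(\pi);
\]
all three statements of the theorem follow from it at once (the peak and descent refinements from Theorem \ref{Thm7}, and the unrefined count from Theorem \ref{Thm5} or by summing over $m$).

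First I would record explicit descriptions of the two classes. The permutations in $\Av_n(231,312)$ are the layered permutations $\text{Dec}_{a_1}\oplus\cdots\oplus\text{Dec}_{a_t}$, as recalled in Section \ref{Sec6}. For $\Av_n(132,312)$, a short argument examining the position $p$ of the entry $n$ (the entries before $n$ must all exceed those after $n$, and the block after $n$ must be decreasing) shows that every element has the form $\rho\cdot n\cdot\text{Dec}_{n-p}$, where $\rho\in\Av_{p-1}(132,312)$ occupies the largest $p-1$ values below $n$; this yields a recursive description and, upon unwinding, a normal form for the class in terms of a sequence of block sizes.

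Next I would compute $\mathcal V(\pi)$ for $\pi$ in each class using Theorem \ref{Thm4}. The point is that the valid hook configurations of these very rigid permutations are severely constrained: in a layered permutation every hook is forced to have its northeast endpoint at the highest point of some strictly later block, and the non-crossing condition forces the hooks emanating from a fixed block to reach blocks of strictly decreasing index; an analogous rigidity holds for permutations in $\Av_n(132,312)$. Carrying out the canonical-valid-hook-configuration construction and then imposing conditions (a) and (b) of Theorem \ref{Thm4}, one obtains for each $\pi$ with $k$ descents in either class a transparent description of $\mathcal V(\pi)$ as the set of compositions of $n-k$ into $k+1$ parts cut out by a short list of partial-sum inequalities, in the spirit of Theorems \ref{Thm8} and \ref{Thm10}.

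Finally — the heart of the argument — I would match the two sides of the displayed identity, most cleanly by exhibiting a descent-preserving bijection $\Av_n(132,312)\to\Av_n(231,312)$ sending each $\pi$ to a permutation $\pi'$ with $\mathcal V(\pi')=\mathcal V(\pi)$, obtained by pairing the block-size data of the $\Av_n(132,312)$ normal form with the block-size data of a layered permutation once one checks that the two families admit exactly the same collection of "admissible" block-size sequences. The main obstacle is precisely this last step together with the bookkeeping that precedes it: the combinatorics of valid hook configurations of general layered permutations is more delicate than in the special cases $\Av(231,312,321)$ and $\Av(132,231,312)$ treated in Sections \ref{Sec6} and \ref{Sec7}, and the real work lies in choosing normal forms that make the $\mathcal V$-preserving correspondence manifest. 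Once $\mathcal V(\pi)$ is pinned down uniformly across both families, the descent and peak refinements are immediate.
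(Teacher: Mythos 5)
Your framework and your intended bijection are exactly right: reducing via Theorems \ref{Thm5} and \ref{Thm7} to a statement about valid compositions, and pairing each $\pi\in\Av_n(132,312)$ with the layered permutation in $\Av_n(231,312)$ having the same sequence of descending-run lengths, is precisely what the paper does (both classes are indeed parametrized by arbitrary compositions of $n$ recording the run lengths, so the ``admissibility'' check is trivial). However, the heart of the theorem --- that corresponding permutations $\pi$ and $\varphi(\pi)$ satisfy $\mathcal V(\pi)=\mathcal V(\varphi(\pi))$ --- is never actually established in your write-up. You propose to extract, via Theorem \ref{Thm4}, an explicit system of partial-sum inequalities describing $\mathcal V(\pi)$ for every permutation in each class and then to compare the two systems, but you carry out none of this computation and you yourself flag it as the main obstacle. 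As it stands, the proposal is a plan with the decisive step missing, so it does not yet prove the theorem.

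The gap can be closed far more cheaply than by the Theorem \ref{Thm4} route you sketch. Since $\pi$ and $\varphi(\pi)$ have the same descents (these are determined by the run lengths), one can pass from the plot of $\pi$ to the plot of $\varphi(\pi)$ by sliding points vertically while keeping all horizontal coordinates fixed; sending each hook to the hook with the same pair of horizontal endpoint coordinates then carries valid hook configurations of $\pi$ bijectively to valid hook configurations of $\varphi(\pi)$ (Conditions 1--3 of Definition \ref{Def2} are preserved in both directions, because which points lie below which hooks, and which points are descent tops or northeast endpoints, is unchanged), and corresponding configurations color the same columns the same way, hence induce equal compositions. This is how the paper argues, and it avoids entirely the delicate bookkeeping for valid hook configurations of general layered permutations that you correctly identify as harder than the special cases of Sections \ref{Sec6} and \ref{Sec7}. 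If you prefer your computational route, you must actually produce and match the two inequality descriptions; until then the claimed identity of the multisets of valid compositions is an assertion, not a proof.
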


\begin{proof}
Invoking Theorem \ref{Thm5} and Theorem \ref{Thm7}, we see that it suffices to exhibit a bijection $\varphi:\Av_n(132,312)\to \Av_n(231,312)$ with the property that $\mathcal V(\pi)=\mathcal V(\varphi(\pi))$ for all $\pi\in\Av_n(132,312)$. 

Suppose we are given a permutation $\pi\in\Av_n(132,312)$. Recall that a \emph{left-to-right maximum} of a permutation is an entry of the permutation that is larger than all entries to its left. Because $\pi$ avoids $312$, we can write \[\pi=b_1(1)b_1(2)\cdots b_1(c_1)\,\,b_2(1)b_2(2)\cdots b_2(c_2)\,\,\cdots\,\, b_\ell(1)b_\ell(2)\cdots b_\ell(c_\ell),\] where the entries $b_i(1)$ are the left-to-right maxima of $\pi$ and the strings $b_i(1)b_i(2)\cdots b_i(c_i)$ are decreasing. In other words, the strings $b_i(1)b_i(2)\cdots b_i(c_i)$ are the descending runs of $\pi$. It turns out that $\pi$ is uniquely determined by the numbers $c_1,\ldots,c_\ell$ and the assumption that $\pi\in\Av_n(132,312)$. Indeed, the entries of $\pi$ that are not left-to-right maxima must appear in decreasing order lest $\pi$ contain a $312$ pattern or a $132$ pattern. These entries must also all be less than the first entry of the permutation. Each permutation in $\Av_n(231,312)$ is also uniquely determined by the lengths of its descending runs. This is because the entries in each descending run of the permutation must be consecutive integers. Indeed, the permutations in $\Av(231,312)$ are called \emph{layered permutations}; as described at the beginning of Section \ref{Sec6}, a permutation is in $\Av(231,312)$ if and only if it can be written as a sum of decreasing permutations. 

For example, the unique permutation in $\Av_{10}(132,312)$ whose descending runs have lengths $2,3,1,2,1,1$ is $5\,\,4\,\,6\,\,3\,\,2\,\,7\,\,8\,\,1\,\,9\,\,10$ (whose plot is shown on the left in Figure \ref{Fig6}). The unique permutation in $\Av_{10}(231,312)$ whose descending runs have lengths $2,3,1,2,1,1$ is $2\,\,1\,\,5\,\,4\,\,3\,\,6\,\,8\,\,7\,\,9\,\,10$ (whose plot is shown on the right in Figure \ref{Fig6}). We now define $\varphi:\Av_n(132,312)\to \Av_n(231,312)$ by declaring $\varphi(\pi)$ to be the unique permutation in $\Av_n(231,312)$ whose $i^\text{th}$ descending run has the same length as the $i^\text{th}$ descending run of $\pi$ for all $i$. 

Figure \ref{Fig6} illustrates the map $\varphi$. In this figure, we have drawn a valid hook configuration on each of the plots. In general, the valid hook configurations of $\pi\in\Av_n(132,312)$ correspond bijectively to the valid hook configurations of $\varphi(\pi)$. Specifically, each hook of a valid hook configuration of $\varphi(\pi)$ is obtained from a hook in the corresponding valid hook configuration of $\pi$ by keeping fixed the horizontal coordinates of the endpoints of the hooks. In other words, we can obtain the plot of $\varphi(\pi)$ by vertically sliding some of the points in the plot of $\pi$; we keep the hooks attached to their endpoints throughout this sliding motion. Corresponding valid hook configurations induce the same valid compositions, so we have $\mathcal V(\pi)=\mathcal V(\varphi(\pi))$ for all $\pi\in\Av_n(132,312)$.  
\end{proof}

\begin{figure}[t]
\begin{center}
\includegraphics[width=.6\linewidth]{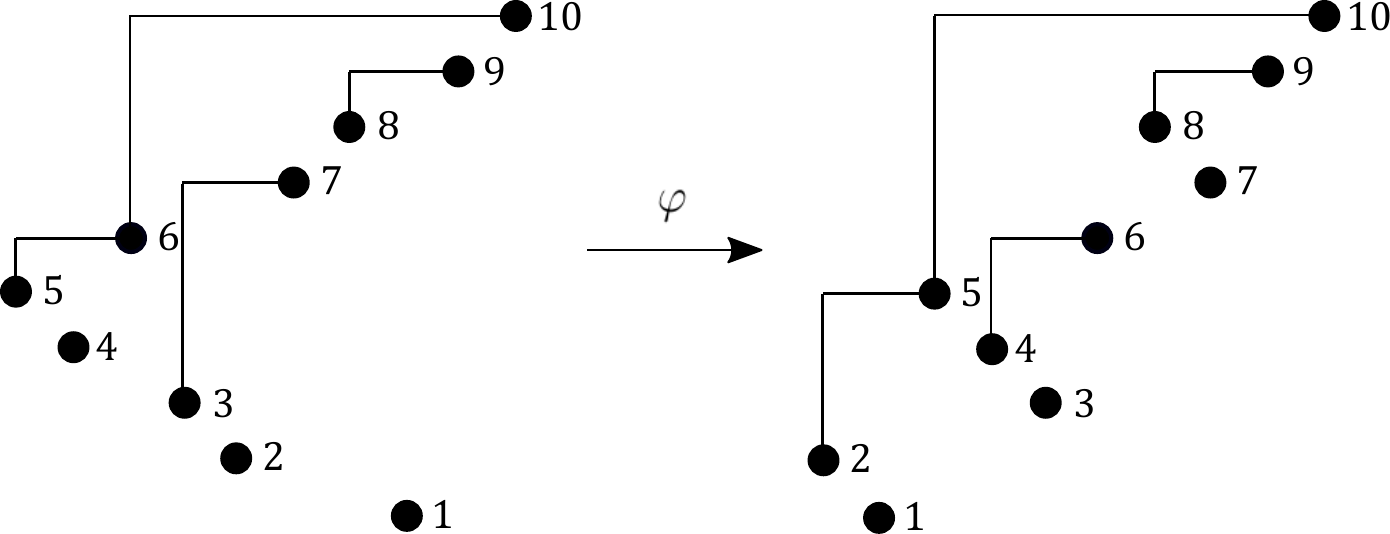}
\end{center}  
\caption{The map $\varphi$ from the proof of Theorem \ref{Thm12} sends the permutation $5\,\,4\,\,6\,\,3\,\,2\,\,7\,\,8\,\,1\,\,9\,\,10$ to the permutation $2\,\,1\,\,5\,\,4\,\,3\,\,6\,\,8\,\,7\,\,9\,\,10$. The valid hook configuration drawn on the plot of $5\,\,4\,\,6\,\,3\,\,2\,\,7\,\,8\,\,1\,\,9\,\,10$ corresponds to the one drawn on the plot of $2\,\,1\,\,5\,\,4\,\,3\,\,6\,\,8\,\,7\,\,9\,\,10$. Both valid hook configurations induce the valid composition $(1,1,2,1,1)$.}\label{Fig6}
\end{figure}

We have seen that the sequences $(|s^{-1}(\Av_n(132,312))|)_{n\geq 1}$ and $(|s^{-1}(\Av_n(231,312))|)_{n\geq 1}$ are identical. Numerical evidence suggests that this sequence is, up to reindexing, the same as the sequence A071356 in the Online Encyclopedia of Integer Sequences \cite{OEIS}. The latter sequence is defined as the expansion of a relatively simple generating function, but it also has some combinatorial interpretations. In addition, it appears as though $(|s^{-1}(\Av_n(132,231))|)_{n\geq 1}$ is the same sequence. We state these observations formally in the following conjecture.  

\begin{conjecture}\label{Conj2}
We have \[\sum_{n\geq 1}|s^{-1}(\Av_n(132,312))|x^n=\sum_{n\geq 1}|s^{-1}(\Av_n(132,231))|x^n=\frac{1-2x-\sqrt{1-4x-4x^2}}{4x}.\]
\end{conjecture}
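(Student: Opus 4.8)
The plan is to establish the two equalities in Conjecture~\ref{Conj2} separately. By Theorem~\ref{Thm12} we have $|s^{-1}(\Av_n(132,312))|=|s^{-1}(\Av_n(231,312))|$, so it suffices to prove
\[\sum_{n\geq1}|s^{-1}(\Av_n(231,312))|\,x^n=\sum_{n\geq1}|s^{-1}(\Av_n(132,231))|\,x^n=\frac{1-2x-\sqrt{1-4x-4x^2}}{4x}.\]
Write $B(x)$ for the right-hand side; a direct computation shows that $B(x)$ is the unique formal power series with $B(0)=0$ satisfying $2xB^2+(2x-1)B+x=0$, equivalently $B=x(1+2B+2B^2)$. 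So the goal reduces to showing that each of the two generating functions above satisfies this functional equation.

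For the layered side, $\Av_n(231,312)$ is the class of layered permutations of $[n]$ (direct sums of decreasing permutations), so $|s^{-1}(\Av_n(231,312))|=\sum_{\pi}|s^{-1}(\pi)|$, the sum over layered $\pi\in S_n$. Since $s(\sigma)$ always ends in its maximum, a layered $s(\sigma)$ has last layer $\{n\}$; writing $\sigma=LnR$ (with $L,R$ the parts to the left and right of the entry $n$) we get $s(\sigma)=s(L)s(R)n$, so $s(L)s(R)$ is a layered permutation $\tau$ of $[n-1]$. Setting $j=|L|$, the prefix word $\tau_{\le j}$ and suffix word $\tau_{>j}$ are, as patterns, again layered, and $\sigma\mapsto(\tau,j,L,R)$ with $L\in s^{-1}(\tau_{\le j})$ and $R\in s^{-1}(\tau_{>j})$ is a bijection. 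The combinatorial crux is a counting lemma: for layered patterns $\alpha$ of size $j$ and $\beta$ of size $n-1-j$, the number of pairs $(\tau,j)$ with $(\tau_{\le j},\tau_{>j})$ order-isomorphic to $(\alpha,\beta)$ equals $2$ when $\alpha,\beta$ are both nonempty (one cut at a layer boundary of $\tau$, and one cut strictly inside the layer formed by merging the last layer of $\alpha$ with the first layer of $\beta$) and equals $1$ otherwise. Feeding this into Theorem~\ref{Thm5}, and using $\sum_{\pi\text{ layered of }[m]}|s^{-1}(\pi)|=|s^{-1}(\Av_m(231,312))|$, the double sum collapses to $|s^{-1}(\Av_n(231,312))|=[x^{n-1}](2B^2+2B+1)$, i.e. $B=x(1+2B+2B^2)$; solving the quadratic (and taking $B(0)=0$) gives the closed form. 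One can sanity-check $[x^n]B=1,2,6,20,72,\dots$ for $n=1,\dots,5$.

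For the valley side, $\Av_n(132,231)$ is the class of ``valley permutations'' of $[n]$ — those that strictly decrease and then strictly increase — and such a permutation is sorted only if it ends in $n$. The same $\sigma=LnR$ analysis applies, but now each prefix of a valley permutation is either strictly decreasing (and $|s^{-1}(\text{Dec}_j)|=0$ for $j\ge2$, so almost all of those contributions vanish) or is again a valley permutation, and dually for suffixes; tracking the surviving terms and their multiplicities should again produce $B=x(1+2B+2B^2)$. An attractive alternative, more in the spirit of Theorems~\ref{Thm12}, \ref{Thm15}, and \ref{Thm16}, is to construct a bijection $\Av_n(132,231)\to\Av_n(231,312)$ carrying sorted permutations to sorted permutations and carrying the multiset of valid compositions of a sorted permutation to the multiset obtained by reversing each composition; since the sums in Theorems~\ref{Thm5} and \ref{Thm7} are invariant under reversing a composition $(q_0,\ldots,q_k)$, such a bijection would deliver the equality together with its descent- and peak-refinements at once. (Small cases confirm that the sorted valley permutations and sorted layered permutations of $[n]$ have valid-composition multisets that pair up in exactly this ``reversed'' way.) I expect this valley-side step to be the main obstacle: the multiplicity bookkeeping is genuinely more delicate than in the layered case because of the interaction with the ``ends in its maximum'' constraint, and the bijective route first requires computing the valid compositions of all sorted members of both classes — accessible via Theorem~\ref{Thm4}, but not yet carried out.
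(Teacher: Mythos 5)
First, note that the paper does not prove this statement at all: it is stated as Conjecture \ref{Conj2}, supported only by numerical evidence, so there is no proof of record to compare against. Judged on its own terms, your proposal splits into two halves of very different status. The first half is in good shape: the reduction $|s^{-1}(\Av_n(132,312))|=|s^{-1}(\Av_n(231,312))|$ via Theorem \ref{Thm12} is legitimate, and your layered-side decomposition is correct. Writing $\sigma=LnR$, the requirement that $s(\sigma)=s(L)s(R)n$ be layered forces $s(L)s(R)$ to be a layered permutation $\tau$ of $[n-1]$ (the last layer of $s(\sigma)$ must be the singleton $\{n\}$ since $s(\sigma)$ ends in its maximum), the number of admissible $L$ (resp.\ $R$) is the fertility of the standardization of the prefix (resp.\ suffix) of $\tau$, and your multiplicity lemma is right: for nonempty layered patterns $\alpha,\beta$ exactly two layered $\tau$ work (concatenate the layer compositions, or merge the last layer of $\alpha$ with the first layer of $\beta$), and exactly one if $\alpha$ or $\beta$ is empty. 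Setting $A_n=|s^{-1}(\Av_n(231,312))|$ this gives $A_n=2A_{n-1}+2\sum_{j=1}^{n-2}A_jA_{n-1-j}$, hence $B=x(1+2B+2B^2)$ and the stated closed form; the initial values $1,2,6,20$ check out. So, once the details are written out, this half would actually prove the first equality of the conjecture together with the closed form.

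The genuine gap is the middle equality, i.e.\ the claim about $s^{-1}(\Av(132,231))$, which you do not prove and explicitly acknowledge as ``not yet carried out.'' Your two suggested routes are only sketches, and the difficulty is real: $\Av(132,231)$ is the class of decreasing-then-increasing (``valley'') permutations, and while prefixes and suffixes of such a $\tau$ do standardize to valley permutations, the analogue of your multiplicity lemma fails to be a clean ``$2$ or $1$'' count, because the number of valley $\tau$ of $[n-1]$ inducing a given pair of prefix/suffix patterns depends on how the values of the decreasing part interleave with the increasing part, not just on the sizes of the pieces; moreover the fertilities of general valley permutations (e.g.\ $531246$) are not among those computed in Sections \ref{Sec5} or \ref{Sec7}, so the ``surviving terms'' cannot simply be read off. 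The alternative bijection $\Av_n(132,231)\to\Av_n(231,312)$ preserving multisets of valid compositions up to reversal would indeed suffice (the sums in Theorems \ref{Thm5} and \ref{Thm7} are invariant under reversing a composition), but no such bijection is constructed, and checking small cases is not a proof. As it stands, the proposal establishes (modulo write-up) only part of the conjectured identity; the equality $\sum_n|s^{-1}(\Av_n(132,231))|x^n=\frac{1-2x-\sqrt{1-4x-4x^2}}{4x}$ remains open.
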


\section{$s^{-1}(\Av(321))$}\label{Sec:321}

As discussed in the introduction, there are known formulas for $|s^{-1}(\Av_n(\tau))|$ whenever $\tau$ is a permutation pattern of length $3$ other than $321$. By contrast, the sequence $(|s^{-1}(\Av_n(321))|)_{n\geq 1}$ appears to be new.\footnote{We have added it as sequence A319027 in the Online Encyclopedia of Integer Sequences.} This sequence is of interest because $s^{-1}(\Av(321))$ is equal to the permutation class $\Av(34251,35241,45231)$. We will use valid hook configurations to establish nontrivial estimates for the growth rate of this sequence. Note that the trivial estimates for this growth rate\footnote{The nitpicky reader might beg for a proof of the existence of the limit defining this growth rate. We say a sequence of real numbers $(a_m)_{m=1}^\infty$ is \emph{supermultiplicative} if $a_ma_n\leq a_{m+n}$ for all positive integers $m,n$. The multiplicative version of Fekete's lemma  \cite{Fekete} states that if $(a_m)_{m=1}^\infty$ is a supermultiplicative sequence, then $\displaystyle{\lim_{m\to\infty}\sqrt[m]{a_m}}$ exists. It is straightforward to show (in the notation of Section \ref{Sec6}) that $s(\sigma\oplus\mu)=s(\sigma)\oplus s(\mu)$ for any $\sigma\in S_m$ and $\mu\in S_n$. It follows that there is an injective map $s^{-1}(\Av_m(321))\times s^{-1}(\Av_n(321))\to s^{-1}(\Av_{m+n}(321))$ given by $(\sigma,\mu)\mapsto\sigma\oplus\mu$. Hence, $(|s^{-1}(\Av_n(321))|)_{n\geq 1}$ is supermultiplicative.} are given by \[4\leq\lim_{n\to\infty}|s^{-1}(\Av_n(321))|^{1/n}\leq 16.\] The lower bound follows from the fact  that $|s^{-1}(123\cdots n)|=C_n$ has growth rate $4$. We know the upper bound because $|\Av_n(321)|=C_n$ has growth rate $4$ and because each permutation of length $n$ has fertility at most $4^n$.\footnote{From a permutation $\sigma\in S_n$, we obtain a word $w_\sigma$ of length $2n$ over the alphabet $\{A,B\}$ as follows. Send $\sigma$ through the stack as described in the introduction. Each time an entry is pushed into the stack, write an $A$. Each time an entry is popped out of the stack, write a $B$. For example, the permutation $3142$ from Figure \ref{Fig1} gives rise to the word $w_{3142}=AABBAABB$. It is straightforward to check that, for each $\pi\in S_n$, the map $\sigma\mapsto w_\sigma$ is injective on $s^{-1}(\pi)$. Therefore, $|s^{-1}(\pi)|\leq 2^{2n}=4^n$.} 

\begin{theorem}\label{Thm13}
We have \[8.4199\leq\lim_{n\to\infty}|s^{-1}(\Av_n(321))|^{1/n}\leq 11.6569.\]
\end{theorem}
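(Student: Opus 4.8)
The plan is to route everything through Theorem \ref{Thm5}, which rewrites the quantity in question as
\[|s^{-1}(\Av_n(321))|=\sum_{\pi\in\Av_n(321)}\ \sum_{(q_0,\ldots,q_k)\in\mathcal V(\pi)}C_{(q_0,\ldots,q_k)},\]
where $k$ is the number of descents of $\pi$. (As noted in the section introduction, treating $s^{-1}(\Av(321))=\Av(34251,35241,45231)$ as an ordinary permutation class does not obviously help, which is precisely why valid hook configurations are brought in.) The single numerical input I would rely on is that $\mathcal V(\pi)\subseteq\Comp_{k+1}(n-k)$ always, together with the identity
\[\sum_{(q_0,\ldots,q_k)\in\Comp_{k+1}(n-k)}C_{(q_0,\ldots,q_k)}=[x^{n-2k-1}]C(x)^{2k+2}=\frac{2k+2}{n+1}\binom{2n-2k-1}{n},\]
which is exactly the uniform bound in Theorem \ref{Thm14}. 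So the problem splits into (i) the distribution of the number of descents over $\Av_n(321)$, and (ii) which $\pi\in\Av_n(321)$ actually attain $\mathcal V(\pi)=\Comp_{k+1}(n-k)$.

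For the upper bound I would simply drop (ii) and use $\mathcal V(\pi)\subseteq\Comp_{k+1}(n-k)$: writing $a_{n,k}$ for the number of $\pi\in\Av_n(321)$ with $k$ descents,
\[|s^{-1}(\Av_n(321))|\ \le\ \sum_{k\ge0}a_{n,k}\,\frac{2k+2}{n+1}\binom{2n-2k-1}{n}.\]
The bivariate generating function $\sum_{n,k}a_{n,k}x^ny^k$ can be produced by standard techniques (the recursive structure of $321$-avoiders, or any of the usual Dyck-path bijections), and the right-hand factor is packaged by the series $1/(1-zC(x))$, whose coefficient of $z^{2k+2}$ is $C(x)^{2k+2}$. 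Combining the two expresses $\sum_n|s^{-1}(\Av_n(321))|\,x^n$ as the diagonal of an explicit algebraic bivariate series; a saddle-point estimate in the descent parameter (equivalently, singularity analysis of that series) then extracts the exponential growth rate. I expect the value $6+4\sqrt2=11.6568\ldots<11.6569$ to fall out here, with the surd coming from a dominant singularity at $x=\tfrac{3}{2}-\sqrt2$.

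For the lower bound the idea is to keep only the part of the double sum we can evaluate exactly, namely those $\pi\in\Av_n(321)$ with $\mathcal V(\pi)=\Comp_{k+1}(n-k)$, for which the fertility is precisely $\frac{2k+2}{n+1}\binom{2n-2k-1}{n}$. Theorem \ref{Thm4} lets one pin these down: they are the $\pi$ whose canonical valid hook configuration has $q_i^*=1$ for all $i\ge1$ and whose parameters $e_i,\alpha_j$ make the right-hand sides of conditions $(a)$ and $(b)$ minimal; one can count these by number of descents, or more modestly exhibit a sufficiently rich explicit sub-family of $\Av(321)$ of this shape. With $b_{n,k}\le a_{n,k}$ denoting the resulting counts, the same generating-function analysis applied to $\sum_k b_{n,k}\frac{2k+2}{n+1}\binom{2n-2k-1}{n}$ gives $\lim_{n\to\infty}|s^{-1}(\Av_n(321))|^{1/n}\ge 8.4199$. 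Alternatively, the footnoted supermultiplicativity of $(|s^{-1}(\Av_n(321))|)_{n\ge1}$ reduces the lower bound to computing a single value $|s^{-1}(\Av_N(321))|$ for a moderately large $N$ — conveniently via the canonical-valid-hook-configuration algorithm underlying Theorem \ref{Thm4} — and taking an $N$-th root.

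The main obstacle is the asymptotic bookkeeping, not a conceptual gap: both $\sum_k a_{n,k}\,[x^{n-2k-1}]C(x)^{2k+2}$ and its $b_{n,k}$ analogue are genuine diagonals of bivariate algebraic series, so one has to set up the generating functions carefully and then run the saddle-point/singularity analysis cleanly enough to land on the exact constants. Inside the lower bound, step (ii) — characterizing the fertility-maximal permutations of $\Av(321)$ through Theorem \ref{Thm4} — is the delicate combinatorial point, and the gap between the two constants is unavoidable with this strategy, since the relaxation $\mathcal V(\pi)\subseteq\Comp_{k+1}(n-k)$ is far from an equality for a typical $321$-avoider.
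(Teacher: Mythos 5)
Your upper bound is essentially the paper's: both start from $|s^{-1}(\Av_n(321))|\le\sum_k|\Av_{n,k}(321)|\,\frac{2k+2}{n+1}\binom{2n-2k-1}{n}$ (Theorem \ref{Thm14} together with the known enumeration of $321$-avoiders by descents), and whether one finishes by singularity analysis of a bivariate series or, as the paper does, by a direct Stirling estimate of the dominant $k$ is just bookkeeping. The genuine gap is in your lower bound. Restricting to those $\pi\in\Av_n(321)$ with $\mathcal V(\pi)=\Comp_{k+1}(n-k)$ is quantitatively hopeless: a $321$-avoider has no two consecutive descents, so condition $(b)$ of Theorem \ref{Thm4} with $m=0$ and the compositions $(1,\ldots,1,n-2k)$ forces the descents of such a $\pi$ to sit exactly at $1,3,\ldots,2k-1$, and condition $(a)$ with $m=k$ forces $q_k^*=1$, which (since the canonical $k$-th hook must then end at the first tail entry) forces the first $2k$ entries to be a permutation of $\{1,\ldots,2k\}$ followed by $2k+1,\ldots,n$ in increasing order. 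Hence there are at most $C_{2k}\le 16^k$ fertility-maximal $321$-avoiders with $k$ descents, and the best your family can contribute is about $\max_k 16^k\binom{2n-2k-1}{n}\approx(16/3)^n\approx 5.34^n$, far short of $8.4199^n$. Your Fekete fallback is sound in principle ($\lim a_n^{1/n}\ge a_N^{1/N}$ for every $N$), but it cannot certify the constant $8.4199$: the $N$-th roots approach the limit slowly from below, so no computationally feasible $N$ is guaranteed (or likely) to reach $8.4199$, and you give no way to know in advance that one does.

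The idea you are missing is to lower-bound the fertility of \emph{many} $321$-avoiders by a single term of Theorem \ref{Thm5}, rather than evaluating it exactly for the very few fertility-maximal ones. The paper observes that every $\pi\in\Av_{n,k}(321)$ ending in $n$ is sorted (there are $|\Av_{n-1,k}(321)|$ of these), and that, since $\Av_{n,k}(321)$ is closed under reverse complement, at least half of the relevant permutations have $\ell\ge n/2$ left-to-right maxima. For such a $\pi$ the canonical valid composition has $q_0^*=\ell-k$, so already $|s^{-1}(\pi)|\ge C_{\ell-k}C_{q_1^*}\cdots C_{q_k^*}\ge C_{n/2-k}C_{n/(2k)}^k$, using the inequality that balancing the indices of a product of (generalized) Catalan numbers at fixed index sum decreases the product. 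Multiplying by $\tfrac12|\Av_{n-1,k}(321)|$ via the explicit descent-count formula and optimizing at $k=\lfloor 0.06582\,n\rfloor$ is what produces $8.4199^n$; this trades the exact maximal fertility of a rare family for a moderate fertility bound valid for an exponentially large family, which is where the strength of the lower bound comes from.
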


\begin{proof}
By reversing permutations, we see that $|\Av_{n,k}(321)|=|\Av_{n,n-1-k}(123)|$. The authors of \cite{Barnabei} have computed $|\Av_{n,k}(123)|$; one can easily use their results to see that the values of $|\Av_{n,k}(321)|$ are given by sequence A091156 in the Online Encyclopedia of Integer Sequences \cite{OEIS} (this interpretation of the sequence A091156 is not new to this article). A precise formula is given by 
\begin{equation}\label{Eq14}
|\Av_{n,k}(321)|=\frac{1}{n+1}{n+1\choose k}\sum_{j=0}^{n+1-2k}{k+j-1\choose k-1}{n+1-k\choose n-2k-j}.
\end{equation}

To prove the desired upper bound, we combine Theorem \ref{Thm14} with \eqref{Eq14} to find that 
\[|s^{-1}(\Av_n(321))|=\sum_{k=0}^{n-1}|s^{-1}(\Av_{n,k}(321))|\leq\sum_{k=0}^{n-1}\frac{2k+2}{n+1}{2n-2k-1\choose n}|\Av_{n,k}(321)|\] \[=\sum_{k=0}^{n-1}\frac{2k+2}{n+1}{2n-2k-1\choose n}\frac{1}{n+1}{n+1\choose k}\sum_{j=0}^{n+1-2k}{k+j-1\choose k-1}{n+1-k\choose n-2k-j}.\] Up to a subexponential factor, this upper bound is \[\sum_{k=0}^{n-1}{2n-2k\choose n}{n\choose k}\sum_{j=0}^{n-2k}{k+j\choose k}{n-k\choose n-2k-j}.\] The sum over $j$ in the last expression is maximized when $j\sim n/2-k$, so our upper bound is (again, up to a subexponential factor) \[\sum_{k=0}^{n-1}{2n-2k\choose n}{n\choose k}\sum_{j=0}^{n-2k}{n/2\choose k}{n-k\choose n/2-k}=\sum_{k=0}^{n-1}(n-2k+1){2n-2k\choose n}{n\choose k}{n/2\choose k}{n-k\choose n/2-k}.\] Let $K(n)$ denote the value of $k$ for which the term in this last summation is maximized, and put $c(n)=K(n)/n$. Note that $c(n)\in(0,1/2)$. A straightforward application of Stirling's formula shows that, up to a subexponential factor, this last upper bound is at most \[\frac{(2-2c(n))^{2-2c(n)}}{(1-2c(n))^{1-2c(n)}}\frac{1}{c(n)^{c(n)}(1-c(n))^{1-c(n)}}\frac{(1/2)^{1/2}}{c(n)^{c(n)}(1/2-c(n))^{1/2-c(n)}}\frac{(1-c(n))^{1-c(n)}}{(1/2-c(n))^{1/2-c(n)}(1/2)^{1/2}}\] \[=f(c(n)),\] where \[f(x)=\frac{(2-2x)^{2-2x}}{(1-2x)^{1-2x}x^{2x}(1/2-x)^{1-2x}}.\] One can easily verify that $f(x)\leq 11.6569$ whenever $x\in(0,1/2)$. This proves the desired upper bound.    

The \emph{reverse complement} of a permutation $\pi_1\cdots\pi_n\in S_n$ is the permutation whose $i^\text{th}$ entry is $n+1-\pi_{n+1-i}$. The proof of the desired lower bound requires two crucial observations. The first is that the set $\Av_{n,k}(321)$ is closed under taking reverse complements. If $\pi\in S_n$ has $\ell$ left-to-right maxima, then the reverse complement of $\pi$ has $n-\ell$ left-to-right maxima. It follows that at least half of the permutations in $\Av_{n,k}(321)$ have at least $n/2$ left-to-right maxima. 

The second observation is that if $\pi\in \Av_{n,k}(321)$ has last entry $\pi_n=n$, then $\pi$ has a valid hook configuration (i.e., $\pi$ is sorted). Indeed, let $d_1<\cdots<d_k$ denote the descents of $\pi$. For each $i\in\{1,\ldots,k\}$, let $\pi_{b_i}$ be the leftmost left-to-right maximum of $\pi$ that lies to the right of $\pi_{d_i}$. The condition $\pi_n=n$ guarantees that this left-to-right maximum exists for all $i$. The canonical valid hook configuration $(H_1^*,\ldots,H_k^*)$ of $\pi$ (described in Section \ref{Sec:VHCs}) is formed by declaring that $H_i^*$ has southwest endpoint $(d_i,\pi_{d_i})$ and northeast endpoint $(b_i,\pi_{b_i})$ for all $i$. 

There are exactly $|\Av_{n-1,k}(321)|$ permutations in $\Av_{n,k}(321)$ with last entry $n$ (we obtain a bijection by adding the entry $n$ to the end of a permutation in $\Av_{n-1,k}(321)$). According to the discussion above, there are at least $\frac 12|\Av_{n-1,k}(321)|$ permutations in $\Av_{n,k}(321)$ with at least $n/2$ left-to-right maxima. Choose one such permutation $\pi$, and let $\ell$ be the number of left-to-right maxima in $\pi$. The canonical valid hook configuration of $\pi$ (described in the previous paragraph) induces a valid composition $(q_0^*,\ldots,q_k^*)\in\mathcal V(\pi)$. In the coloring of the plot of $\pi$ induced by the canonical valid hook configuration, there are exactly $\ell-k$ points colored blue (sky-colored). Indeed, these points are precisely the left-to-right maxima of $\pi$ that are not northeast endpoints of hooks. This tells us that $q_0^*=\ell-k$. According to Theorem \ref{Thm5}, 
\begin{equation}\label{Eq17}
|s^{-1}(\pi)|\geq C_{(q_0^*,\ldots,q_k^*)}=C_{\ell-k}C_{q_1^*}\cdots C_{q_k^*}.
\end{equation} 

For convenience, we define $C_x=\dfrac{\Gamma(2x+1)}{\Gamma(x+2)\Gamma(x+1)}$, where $\Gamma$ denotes the Gamma function. When $x$ is a positive integer, $C_x$ is simply the $x^\text{th}$ Catalan number. One can show that 
\begin{equation}\label{Eq15}
C_{x+\varepsilon}C_{y-\varepsilon}<C_xC_y\quad\text{whenever}\quad 0<x<y\quad\text{and}\quad 0<\varepsilon\leq\frac{y-x}{2}.
\end{equation} In other words, a product of (generalized) Catalan numbers decreases when we make the indices closer while preserving the sum of the indices. Let us assume that $n$ is sufficiently large, that $5<k<0.4n$, and that $\pi$ is chosen as in the previous paragraph. By the properties of valid compositions, we know that 
\begin{equation}\label{Eq16}
\ell-k+q_1^*+\cdots+q_k^*=n-k.
\end{equation} It follows from \eqref{Eq15} and \eqref{Eq16} that 
\begin{equation}\label{Eq18}
C_{\ell-k}C_{q_1^*}\cdots C_{q_k^*}\geq C_{\ell-k}C_{(n-\ell)/k}^k.
\end{equation} The assumption $5<k<0.4n$ and the fact that $\ell\geq n/2$ guarantee that \[\frac{n-\ell}{k}\leq \frac{n}{2k}<\frac n2-k\leq\ell-k.\] By \eqref{Eq15}, we have 
\begin{equation}\label{Eq19}
C_{\ell-k}C_{(n-\ell)/k}^k\geq C_{n/2-k}C_{n/(2k)}^k.
\end{equation}

When we combine \eqref{Eq15}, \eqref{Eq18}, and \eqref{Eq19} with the discussion above, we find that there are at least $\frac 12|\Av_{n-1,k}(321)|$ permutations in $\Av_{n,k}(321)$ that each have at least $C_{n/2-k}C_{n/(2k)}^k$ preimages under $s$ (for $5<k<0.4n$). We now use \eqref{Eq14} to see that \[|s^{-1}(\Av_n(321))|\geq \frac 12|\Av_{n-1,k}(321)|C_{n/2-k}C_{n/(2k)}^k\] \[\geq\frac{1}{2n}{n\choose k}\sum_{j=0}^{n-2k}{k+j-1\choose k-1}{n-k\choose n-2k-j-1}C_{n/2-k}C_{n/(2k)}^k\] \[\geq\frac{1}{2n}{n\choose k}{k+(\left\lfloor n/2\right\rfloor-k)-1\choose k-1}{n-k\choose n-2k-(\left\lfloor n/2\right\rfloor-k)-1}C_{n/2-k}C_{n/(2k)}^k\] \[=\frac{1}{2n}{n\choose k}{\left\lfloor n/2\right\rfloor-1\choose k-1}{n-k\choose \left\lceil n/2\right\rceil-k-1}C_{n/2-k}C_{n/(2k)}^k.\] This holds whenever $5<k<0.4n$. In particular, we can put $k=\left\lfloor 0.06582n\right\rfloor$ (this value is chosen to maximize the lower bound). With this choice of $k$, we can use Stirling's formula to see that our lower bound is at least $8.4199^n$ for sufficiently large $n$.       
\end{proof}

\section{Concluding Remarks and Further Directions}

Let us collect some open problems and conjectures arising from and related to the topics studied in this article. 

Recall that a sequence $a_1,\ldots,a_m$ is called \emph{unimodal} if there exists $j\in\{1,\ldots,m\}$ such that $a_1\leq\cdots\leq a_{j-1}\leq a_j\geq a_{j+1}\geq\cdots\geq a_m$ and is called \emph{log-concave} if $a_j^2\geq a_{j-1}a_{j+1}$ for all $j\in\{2,\ldots,m-1\}$ \cite{Branden2}. This sequence is called \emph{real-rooted} if all of the complex roots of the polynomial $\sum_{k=1}^ma_kx^k$ are real. It is well-known that a real-rooted sequence of nonnegative numbers is log-concave and that a log-concave sequence of nonnegative numbers is unimodal.

The notions of unimodality, log-concavity, and real-rootedness are prominent in the study of the stack-sorting map. For example, let $f_k(\pi)$ denote the number of elements of $s^{-1}(\pi)$ with $k$ descents. B\'ona proved \cite{BonaSymmetry} that the sequence $f_0(\pi),\ldots,f_{n-1}(\pi)$ is symmetric and unimodal for each $\pi\in S_n$. We conjecture the following much stronger result, which we have verified for all permutations of length at most $8$. 

\begin{conjecture}\label{Conj3}
For each permutation $\pi\in S_n$, the sequence $f_0(\pi),\ldots,f_{n-1}(\pi)$ is real-rooted. 
\end{conjecture}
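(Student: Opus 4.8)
The statement to prove is Conjecture \ref{Conj3}: for every $\pi\in S_n$, the descent-refined fertility polynomial $\sum_k f_k(\pi)x^k$ is real-rooted. I would attack this through Theorem \ref{Thm7}, which expresses $f_k(\pi)$ as a sum over valid compositions $(q_0,\ldots,q_k)\in\mathcal V(\pi)$ of the Narayana-convolution coefficients $\sum_{j_0+\cdots+j_k=m+1}\prod_t N(q_t,j_t)$. The generating polynomial in $y$ attached to a single valid composition $(q_0,\ldots,q_k)$ is, up to the shift $y^{m}\leftrightarrow y^{m+1-\text{(number of parts)}}$, the product $\prod_{t=0}^k N_{q_t}(y)$, where $N_q(y)=\sum_j N(q,j)y^{j-1}$ is the $q$-th Narayana polynomial. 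The Narayana polynomials are classically known to be real-rooted (they are, up to normalization, the Jacobi/Gegenbauer-type polynomials counting Dyck paths by peaks, and interlacing is standard), so each \emph{individual} summand is real-rooted with nonnegative coefficients. The real difficulty is that a sum of real-rooted polynomials need not be real-rooted: I need the family $\{\prod_t N_{q_t}(y): (q_0,\ldots,q_k)\in\mathcal V(\pi)\}$ to have a \emph{common interlacer}, or to form a sequence that is ``compatible'' in the sense of Chudnovsky--Seymour / Brändén, so that the sum stays real-rooted.

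\textbf{Key steps.} First I would record the real-rootedness and mutual interlacing of Narayana polynomials: $N_q(y)$ and $N_{q+1}(y)$ interlace, and more generally products $\prod N_{q_t}(y)$ over compositions of a fixed integer into a fixed number of parts form a totally nonnegative / interlacing family. Second, I would try to organize $\mathcal V(\pi)$ using the structure given by Theorem \ref{Thm4}: the valid compositions are exactly those $(q_0,\ldots,q_k)$ dominating $(q_0^*,\ldots,q_k^*)$ in each of the prefix-sum inequalities of conditions (a) and (b). This is a distributive-lattice-like (order-ideal) structure, and the hope is that summing the interlacing polynomials $\prod_t N_{q_t}(y)$ over such a ``staircase'' region preserves real-rootedness because one can build the sum by a sequence of moves each of which replaces a polynomial by a sum with an interlacing partner. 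Third, I would set up an induction on $n$ (or on $k$, the number of descents) exploiting the recursive structure $s(\pi)=s(L)s(R)n$: writing $\pi=LnR$, the fertility polynomial of $\pi$ should factor or convolve in terms of those of $L$ and $R$ (this is essentially how Theorem \ref{Thm5} is proved in \cite{Defant}), and products and suitable sums of real-rooted polynomials with a common interlacer remain real-rooted.

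\textbf{Main obstacle.} The crux is Step two: showing that the particular sum over $\mathcal V(\pi)$ of products of Narayana polynomials is real-rooted. Individual terms are fine, products of real-rooted polynomials are fine, but the sum is delicate, and the inequalities defining $\mathcal V(\pi)$ in Theorem \ref{Thm4} are genuinely intricate (two families of prefix-sum constraints with the $e_m$'s and $\alpha_j$'s intertwined), so it is not obvious that the summation region decomposes into pieces that a common-interlacer argument can handle. I would look for a ``compatible polynomial sequence'' framework: define, for each threshold composition $q^*$, the polynomial $P_{q^*}(y)=\sum_{q\succeq q^*}\prod_t N_{q_t}(y)$ summed over the relevant ideal, and prove by induction on the ideal (peeling off one coordinate's worth of slack at a time) that all the $P_{q^*}$ are real-rooted and pairwise interlacing. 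If that lattice-peeling argument goes through, the theorem follows; if not, one might instead need a direct generating-function identity for $\sum_n\sum_k f_k(\pi_n)x^n y^k$ over a suitable family and a Hermite--Biehler / continued-fraction argument, but that seems harder to push through in the generality of arbitrary $\pi$. I expect the honest resolution to require a new structural insight into valid hook configurations, which is consistent with the paper's own remark that valid hook configurations are not yet thoroughly understood — so I would flag this as a genuinely open problem rather than claim a short proof.
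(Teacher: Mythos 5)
The statement you were asked about is Conjecture \ref{Conj3}, which the paper does not prove: it is stated as an open conjecture, verified computationally only for permutations of length at most $8$. So there is no proof in the paper to compare your argument against, and your closing assessment --- that this should be flagged as genuinely open rather than claimed --- coincides with the paper's own position.

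As for the content of your plan: the reduction via Theorem \ref{Thm7} is correct. Writing $N_q(y)=\sum_j N(q,j)y^{j-1}$, the descent polynomial of $s^{-1}(\pi)$ is $\sum_m f_m(\pi)y^m=\sum_{(q_0,\ldots,q_k)\in\mathcal V(\pi)} y^{k}\prod_{t=0}^k N_{q_t}(y)$ (the shift is uniform since $k$ is the number of descents of $\pi$), each summand is real-rooted with nonnegative coefficients, and the whole difficulty is that the sum need not be. But nothing in your proposal establishes the decisive step: products $\prod_t N_{q_t}(y)$ over compositions of a fixed integer into a fixed number of parts do not automatically form a mutually interlacing (or Chudnovsky--Seymour/Br\"and\'en compatible) family, and summing over the prefix-sum region cut out by conditions (a) and (b) of Theorem \ref{Thm4} is not a known real-rootedness-preserving operation --- proving that it is, for exactly these regions, is essentially a restatement of the conjecture. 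The proposed induction via $s(\pi)=s(L)s(R)n$ also does not obviously help, since the fertility of $\pi$ does not factor through the fertilities of $L$ and $R$ in a way compatible with descent counts. So what you have is an honest research plan with the key structural insight missing, which is consistent with the paper, where only B\'ona's symmetry and unimodality of $f_0(\pi),\ldots,f_{n-1}(\pi)$ are known and even log-concavity is explicitly left open.
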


Of course, even if it is too difficult to prove that $f_0(\pi),\ldots,f_{n-1}(\pi)$ is always real-rooted, it would be very interesting to prove the weaker statement that this sequence is always log-concave. One could also attempt to find large classes of permutations for which Conjecture \ref{Conj3} holds. 

A consequence of B\'ona's result is that
\begin{equation}\label{Eq20}
W_t(n,0),\ldots,W_t(n,n-1)
\end{equation} is symmetric and unimodal for all $t,n\geq 1$, where $W_t(n,k)$ is the number of $t$-stack-sortable permutations in $S_n$ with $k$ descents. Knowing that the sequence in \eqref{Eq20} is real-rooted when $t=1$ and when $t=n$, B\'ona \cite{BonaSymmetry} conjectured that the sequence is real-rooted in general. Br\"and\'en \cite{Branden3} later proved this conjecture in the cases $t=2$ and $t=n-2$. This leads us to the following much more general problem. 

\begin{question}\label{Quest1}
Given a set $\mathcal U$ of permutations, let $f_k(\mathcal U\cap S_n)$ denote the number of permutations in $s^{-1}(\mathcal U\cap S_n)$ with exactly $k$ descents. Can we find interesting examples of sets $\mathcal U$ (such as permutation classes) with the property that $f_0(\mathcal U\cap S_n),\ldots,f_{n-1}(\mathcal U\cap S_n)$ is a real-rooted sequence for every $n\geq 1$? Is this sequence always real-rooted? 
\end{question} 

Recall from Section \ref{Sec:Back} that $s^{-1}(\Av_n(123))$ is empty when $n\geq 4$. In general, $s^{-1}(\Av_n(123\cdots m))$ is empty if $n\geq 2^{m-1}$. This is certainly true for $m\leq 3$. To see that this is true for $m\geq 4$, suppose $n\geq 2^{m-1}$ and $\pi\in S_n$. Write $\pi=LnR$ so that $s(\pi)=s(L)s(R)n$. One of $L$ and $R$ has length at least $2^{m-2}$, so it follows by induction on $m$ that either $s(L)$ or $s(R)$ contains an increasing subsequence of length $m-1$. Therefore, $s(\pi)$ contains the pattern $123\cdots m$. 

Despite the uninteresting behavior of the sequence $(|s^{-1}(\Av_n(123\cdots m))|)_{n\geq 1}$ for large values of $n$, it could still be interesting to study the initial terms in this sequence. For example, the nonzero terms of the sequence are $1,2,6,10,13,10,3$ when $m=4$. 

\begin{conjecture}\label{Conj4}
For each integer $m\geq 2$, the sequence $(|s^{-1}(\Av_n(123\cdots m))|)_{n=1}^{2^{m-1}-1}$ is unimodal. 
\end{conjecture}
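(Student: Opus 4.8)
The plan is to recast the statement analytically and then attack it in two stages. Since $s$ preserves length, $|s^{-1}(\Av_n(123\cdots m))|$ equals $a_n^{(m)}:=|\{\pi\in S_n:\operatorname{LIS}(s(\pi))\le m-1\}|$, where $\operatorname{LIS}$ denotes the length of a longest increasing subsequence. Using the description of $s$ as the post-order traversal of the decreasing binary plane tree of $\pi$, $a_n^{(m)}$ also counts decreasing binary trees on $n$ vertices whose post-order word avoids $12\cdots m$. The argument given just before Conjecture \ref{Conj4} shows $a_n^{(m)}=0$ for $n\ge 2^{m-1}$, so $P_m(x):=\sum_{n\ge1}a_n^{(m)}x^n$ is a polynomial of degree $2^{m-1}-1$, and the task is to prove $P_m$ is unimodal. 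The small cases are worth recording: for $m=2,3,4$ the coefficient sequences are $(1)$, $(1,2,1)$, and $(1,2,6,10,13,10,3)$, which suggest that the mode of $P_m$ sits at $n=2^{m-1}-m+1$ and that $P_m$ is in fact strictly unimodal; establishing these would be natural milestones.

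The first stage is to extract a recursion for $P_m$ from the root decomposition $\pi=LnR$, i.e.\ $s(\pi)=s(L)\,s(R)\,n$. The obstruction to a naive recursion is that, although $\operatorname{LIS}(s(\pi))=1+\operatorname{LIS}(s(L)s(R))$, the quantity $\operatorname{LIS}(s(L)s(R))$ depends on how the value sets of $L$ and $R$ interleave inside $\{1,\dots,n-1\}$: it equals $\max_v[\,\ell^{\le v}(s(L))+\ell^{>v}(s(R))\,]$, where $\ell^{\le v}(w)$ and $\ell^{>v}(w)$ are the lengths of the longest increasing subsequences of $w$ using only values $\le v$ and values $>v$, respectively. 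I would resolve this by carrying a catalytic refinement: alongside $\pi$, record the left-to-right longest-increasing-subsequence profile of $s(\pi)$, namely the weakly increasing word $v\mapsto\ell^{\le v}(s(\pi))$ (equivalently, the at most $m-1$ ``record values'' of $s(\pi)$). Since $\operatorname{LIS}(s(\pi))\le m-1$ this profile encodes only bounded information, so the root decomposition becomes a finite system of algebraic functional equations in $x$ together with a bounded number of catalytic variables tracking the profile; I expect this system to yield, after the kernel method, either a closed form for $P_m$ or at least an explicit recursion for the numbers $a_n^{(m)}$.

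The second stage is to deduce unimodality from whatever the first stage produces, and here the essential warning is that $P_m$ is in general \emph{only} unimodal and not log-concave: for $m=4$ one has $(a_2^{(4)})^2=4<6=a_1^{(4)}a_3^{(4)}$, so real-rootedness and log-concavity (and hence all the usual soft criteria) are unavailable. The plan is therefore to pin down the mode $n_m$ (conjecturally $2^{m-1}-m+1$, and in any case the unique sign change of $a_{n+1}^{(m)}-a_n^{(m)}$, which should emerge from the first stage) and then to prove $a_n^{(m)}\le a_{n+1}^{(m)}$ for $n<n_m$ and $a_{n+1}^{(m)}\le a_n^{(m)}$ for $n\ge n_m$, either by estimating the ratios $a_{n+1}^{(m)}/a_n^{(m)}$ directly from the formula, or by exhibiting explicit length-changing injections between the corresponding sets of permutations that preserve the property $\operatorname{LIS}(s(\cdot))\le m-1$.

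The main obstacle is precisely this second stage. Constructing injections is delicate because $s$ interacts poorly with inserting or deleting a single entry: the obvious moves on $\pi$ — prepending a new maximum, appending an entry, inflating an entry into a descent — each change $\operatorname{LIS}(s(\pi))$ by exactly $1$ rather than $0$, so one needs moves tailored to the tree/post-order picture, and one must locate the crossover $n_m$ exactly. Absent a clean injection, the fallback is a purely analytic treatment of the consecutive ratios using the expression from the first stage, which is feasible but demands careful asymptotics in $m$. Two reasonable intermediate targets are: (i) prove the conjecture for each fixed small $m$ from an exact formula; and (ii) prove the sharper-looking but possibly more tractable statement that $(a_n^{(m)})_{n\ge 2}$ is log-concave (the observed failure of log-concavity occurring only at the very first comparison), since combined with $a_1^{(m)}\le a_2^{(m)}$ this already forces unimodality of $P_m$, and log-concavity of a subsequence is often accessible by direct injective arguments.
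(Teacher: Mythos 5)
There is nothing in the paper to compare against here: the statement you were given is Conjecture \ref{Conj4}, which the paper leaves open. The only things the paper establishes in its vicinity are that $s^{-1}(\Av_n(12\cdots m))$ is empty for $n\geq 2^{m-1}$ (the induction on $m$ via $\pi=LnR$, $s(\pi)=s(L)s(R)n$) and the explicit nonzero values $1,2,6,10,13,10,3$ for $m=4$. Your submission is likewise not a proof but a research program, and you say as much; so the honest assessment is that the conjecture remains unproved after your write-up, and the gaps are exactly where you flag them, plus one you underplay.

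Concretely: in your first stage, the reduction of $|s^{-1}(\Av_n(12\cdots m))|$ to counting $\pi$ with $\operatorname{LIS}(s(\pi))\leq m-1$ and the identity $\operatorname{LIS}(s(\pi))=1+\operatorname{LIS}(s(L)s(R))$ are correct, but the claim that the profile $v\mapsto\ell^{\leq v}(s(\pi))$ ``encodes only bounded information'' is misleading: the profile is determined by up to $m-2$ jump locations which are \emph{values} in $\{1,\dots,n\}$, so you need that many unbounded catalytic variables, and when you recombine $L$ and $R$ at the root the value sets interleave arbitrarily, so the functional equation is not a product of the two generating functions but a shuffle-type composition in the catalytic variables. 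Solving such multi-catalytic systems by the kernel method is not routine, and you have not written down the system, let alone extracted a recursion or formula, even for $m=4$ or $m=5$. Your second stage is then entirely conditional on output you do not have: the mode $n_m=2^{m-1}-m+1$ is only conjectural, no injection is constructed (and you correctly note the natural insertion moves all shift $\operatorname{LIS}(s(\cdot))$ by one), and the observation that log-concavity fails at $n=1$ for $m=4$ correctly rules out the soft real-rootedness route but does not supply a replacement. The proposal is a sensible plan with accurate sanity checks, but as it stands it proves nothing beyond the small cases already recorded in the paper.
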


We next consider some natural questions that we have not attempted to answer. 

\begin{question}\label{Quest6}
Can we obtain interesting results by enumerating sets of the form 
\linebreak $s^{-1}(\Av(\tau^{(1)},\ldots,\tau^{(r)}))$ when the patterns $\tau^{(1)},\ldots,\tau^{(r)}$ are not all of length $3$?
\end{question} 

\begin{question}\label{Quest2}
Let $S_{n,k}$ denote the set of permutations in $S_n$ with exactly $k$ descents. Can we find formulas for $|s^{-1}(S_{n,1})|$, $|s^{-1}(S_{n,2})|$, or $|s^{-1}(S_{n,3})|$? 
\end{question} 

Throughout this article, we often enumerated permutations in sets of the form \linebreak
$s^{-1}(\Av_n(\tau^{(1)},\ldots,\tau^{(r)}))$ according to their number of descents or number of peaks. This is because the techniques in \cite{Defant} allow us to use valid hook configurations to count preimages of permutations according to these statistics. It is likely that there are other permutation statistics that can be treated similarly using valid hook configurations. It would be interesting to obtain results for these other statistics analogous to those derived above for descents and peaks. 

The article \cite{Defant} provides a general method for computing the number of decreasing plane trees of various types that have a specified permutation as their postorder reading (see the article for the relevant definitions). In the special case in which the trees are decreasing binary plane trees, this is equivalent to computing fertilities of permutations. This suggests that one could obtain enumerative results analogous to those from this paper by replacing decreasing binary plane trees with other types of trees. This provides a very general new collection of enumerative problems. Namely, we want to count the decreasing plane trees of a certain type whose postorders lie in some permutation class. Two very specific examples of this type of problem are the following. Preserve the notation from the article \cite{Defant}. 

\begin{question}\label{Quest3}
How many decreasing $\mathbb N$-trees have postorders that lie in the set $\Av_n(132,231,312)$? How many unary-binary trees have postorders that lie in the set $\Av_n(132,231,312)$?
\end{question}

We now collect the open problems and conjectures that arose throughout Sections 4--10. First, recall the following conjecture from Section \ref{Sec6}. 

\begin{reptheorem}{Conj1}
In the notation of Theorem \ref{Thm8}, we have \[\sum_{q\,\in\,\Comp_{k+1}(n-k)}C_qD_q=\frac{1}{n+1}{n-k-1\choose k}{2n-2k\choose n}\] for all nonnegative integers $n$ and $k$. 
\end{reptheorem}

In Section \ref{Sec7}, we defined \[\theta_{n,k}=(k+1)k(k-1)\cdots 321(k+2)(k+3)\cdots n\] and mentioned that the number of permutations in $s^{-1}(\theta_{n,k})$ with $m$ peaks is 
\begin{equation}\label{Eq21}
\sum_{(q_0,\ldots,q_k)\in\Comp_{k+1}(n-k)}\sum_{(j_0,\ldots,j_k)\in\Comp_{k+1}(m+1)}\prod_{t=0}^k V(q_t,j_t).
\end{equation} 

\begin{question}\label{Quest4}
Can we find a simple closed form for the expression in \eqref{Eq21}?
\end{question}  

We stated the following intriguing conjecture in Section \ref{Sec:Pair}. 

\begin{reptheorem}{Conj2}
We have \[\sum_{n\geq 1}|s^{-1}(\Av_n(132,312))|x^n=\sum_{n\geq 1}|s^{-1}(\Av_n(132,231))|x^n=\frac{1-2x-\sqrt{1-4x-4x^2}}{4x}.\]
\end{reptheorem}

Of course, our results in Section \ref{Sec:321} are far from perfect. 

\begin{question}\label{Quest5}
Can we enumerate the permutations in $s^{-1}(\Av(321))$ exactly? Can we at least improve the estimates from Theorem \ref{Thm13}? 
\end{question}

We have focused primarily on preimage sets of the form $s^{-1}(\Av(\tau^{(1)},\ldots,\tau^{(r)}))$ when 
$\emptyset\neq\{\tau^{(1)},\ldots,\tau^{(r)}\}\subseteq\{132,231,312,321\}$. The astute reader may have realized that the only preimage set of this form that we have not mentioned is $s^{-1}(\Av(231,321))$. This set appears to be enumerated by the OEIS sequence A165543 \cite{OEIS}. More precisely, we have the following conjecture.  

\begin{conjecture}\label{Conj5}
We have \[\sum_{n\geq 0}|s^{-1}(\Av_n(231,321))|x^n=\frac{1}{1-xC(xC(x))},\] where $C(x)=\dfrac{1-\sqrt{1-4x}}{2x}$ is the generating function of the sequence of Catalan numbers. 
\end{conjecture}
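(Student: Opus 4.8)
The plan is to compute the generating function $T(x)=\sum_{n\ge 0}|s^{-1}(\Av_n(231,321))|x^n$ by finding a structural decomposition of $s^{-1}(\Av(231,321))$, in the spirit of the proof of Theorem~\ref{Thm9}. First I would record two reductions. Because $231$ and $321$ are both sum-indecomposable and $s(\alpha\oplus\beta)=s(\alpha)\oplus s(\beta)$, a permutation lies in $s^{-1}(\Av(231,321))$ if and only if each of its sum-indecomposable summands does; hence $T=1/(1-B)$, where $B(x)$ generates the nonempty sum-indecomposable members of $s^{-1}(\Av(231,321))$. Second, the Catalan identity $xC(x)^2=C(x)-1$ shows that the claimed formula $T=1/\bigl(1-xC(xC(x))\bigr)$ is equivalent to the algebraic equation $B=x+C(x)B^2$. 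So it suffices to prove this functional equation for $B$ by a combinatorial construction.

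To analyze a nonempty sum-indecomposable $\tau\in s^{-1}(\Av(231,321))$, write $\tau=LnR$ with $n=|\tau|$, so $s(\tau)=s(L)s(R)n$. Since the largest entry of $s(\tau)$ sits at the end, it cannot be part of a $231$ or a $321$ pattern; thus $\tau\in s^{-1}(\Av(231,321))$ precisely when $L,R\in s^{-1}(\Av(231,321))$ and no occurrence of $231$ or $321$ straddles the boundary between $s(L)$ and $s(R)$. The heart of the argument is to translate this ``no straddling pattern'' condition, together with sum-indecomposability of $\tau$, into an explicit description of the admissible pairs $(L,R)$ and of the ways the value sets of $L$ and $R$ may interleave inside $\{1,\dots,n-1\}$. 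The singleton $\tau=1$ supplies the term $x$; for $|\tau|\ge 2$ I expect the members to fall into two families, according to whether $\tau_1$ is the bottom of a $231$ pattern of $s(\tau)$ or not, exactly paralleling the two-type split in the proof of Theorem~\ref{Thm9}. One family should contribute the factor $C(x)$ (a Catalan-indexed choice of how the value intervals of $L$ and $R$ are shuffled), while each of the two factors of $B$ in $C(x)B^2$ comes from a recursively smaller sum-indecomposable preimage. Verifying that this bookkeeping is a bijection gives $B=x+C(x)B^2$ and hence establishes the conjecture; the refinements by descents and peaks should then follow by carrying the statistic through the same decomposition, replacing $C(x)$ by $F(x,y)$ or $G(x,y)$.

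A parallel route proceeds through valid hook configurations. One first checks that $\Av_n(231,321)$ consists exactly of the permutations $v_1\,B_1\,v_2\,B_2\cdots v_r\,B_r$ in which $v_1<\dots<v_r=n$ are the left-to-right maxima and $B_t$ is the increasing list of the values lying in the interval $(v_{t-1},v_t)$; such a $\pi$ is sorted if and only if $B_r$ is empty, that is, $\pi_n=n$. For sorted $\pi$, every hook must run from a descent top $v_t$ to some later left-to-right maximum $v_{t'}$ — it can never terminate at a $B_t$-entry, since the subsequent left-to-right maximum would then lie above the hook — so $\mathcal V(\pi)$ is governed by the non-crossing families of such hooks, and Theorem~\ref{Thm5} expresses $|s^{-1}(\pi)|$ as a sum of products of Catalan numbers that one can then resum over all sorted $\pi$.

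The step I expect to be the main obstacle is the ``no straddling pattern'' analysis in the first approach: one must pin down exactly which interleavings of the (non-interval) value sets of $L$ and $R$ are compatible with $s(L)s(R)$ avoiding $231$ and $321$, and then check that the ensuing growth rule is a bijection. This is delicate because it couples a condition on the sorted images $s(L),s(R)$ with a condition on how the value sets of $L$ and $R$ are interleaved, and several boundary cases — $|L|\le 1$, whether $1\in L$ or $1\in R$, whether $n-1\in L$ or $n-1\in R$ — require separate treatment; it is in reconciling these cases that the Catalan factor $C(x)$ should emerge. In the valid-hook-configuration route, the analogous difficulty is showing that the non-crossing hook families of a sorted $\pi$, weighted by the products of Catalan numbers from their induced valid compositions, sum to the correct quantity.
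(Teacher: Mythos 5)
First, note that the statement you are proving is left \emph{unproved} in the paper: Conjecture \ref{Conj5} is stated only with numerical evidence (the apparent match with OEIS A165543), so there is no proof of record to compare against, and your proposal must stand on its own. The parts of it that are actually argued do check out and are genuinely useful reductions: since $231$ and $321$ are sum-indecomposable and $s(\alpha\oplus\beta)=s(\alpha)\oplus s(\beta)$, membership in $s^{-1}(\Av(231,321))$ is indeed inherited by and recovered from sum-indecomposable components, giving $T=1/(1-B)$; and since $xC(xC(x))$ is the unique power series vanishing at $0$ that satisfies $B=x+C(x)B^2$ (using $xC(x)^2=C(x)-1$), the conjecture is equivalent to that functional equation. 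Your descriptions in the second route are also correct as far as they go: $\Av(231,321)$ consists of the permutations $v_1B_1\cdots v_rB_r$ you describe, such a permutation is sorted exactly when it ends in $n$, and every hook in a valid hook configuration must join two left-to-right maxima.

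The gap is that everything beyond these reductions is asserted, not proved, and it is precisely the content of the conjecture. The two-family split of sum-indecomposable preimages, the claim that one family contributes a factor $C(x)$ (``a Catalan-indexed choice of how the value intervals of $L$ and $R$ are shuffled''), and the claim that the remaining structure contributes $B^2$ are all flagged by you with ``I expect'' and ``should''; no bijection is defined and no verification is attempted, and you yourself identify the ``no straddling pattern'' analysis as the unresolved obstacle. Moreover, the naive $\tau=LnR$ split cannot directly supply the two $B$-factors: for example $\tau=2413$ is a sum-indecomposable element of $s^{-1}(\Av(231,321))$ (since $s(2413)=2134$ avoids both patterns), yet its right part $R=13$ is order-isomorphic to the decomposable permutation $12$, so the recursive pieces are not themselves sum-indecomposable members and some genuinely different carving (as in the proof of Theorem \ref{Thm9}, where the pieces are rearranged nontrivially) would have to be invented and proved bijective. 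The difficulty is compounded by the fact that the straddling condition couples the actual value sets of $L$ and $R$ (not just their patterns) with the sorted images $s(L)$, $s(R)$. The valid-hook-configuration route stops at the same point: knowing that hooks join left-to-right maxima does not by itself resum $\sum_{(q_0,\ldots,q_k)\in\mathcal V(\pi)}C_{(q_0,\ldots,q_k)}$ over all sorted $\pi\in\Av_n(231,321)$ into $1/(1-xC(xC(x)))$. Agreement of $B=x+C(x)B^2$ with small-$n$ data ($1,1,3,11,\ldots$) is consistency, not proof, so the conjecture remains open after this proposal.
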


\section{Acknowledgments}
The author thanks Amanda Burcroff for a useful conversation about skyhooks. He also thanks Michael Engen for valuable conversations and thanks Steve Butler for providing data that was used to formulate several theorems and conjectures. He thanks Chetak Hossain for the contributions mentioned in Section \ref{Sec:312,321}. The author was supported by a Fannie and John Hertz Foundation Fellowship and an NSF Graduate Research Fellowship.

\end{document}